\newtheorem{theorem}{Theorem}[section]
\newtheorem{lemma}[theorem]{Lemma}
\newtheorem{proposition}[theorem]{Proposition}
\newtheorem{corollary}[theorem]{Corollary}
\theoremstyle{remark}
\newtheorem{example}[theorem]{Example}
\theoremstyle{definition}
\newtheorem{definition}[theorem]{Definition}
\numberwithin{equation}{section}
\newcommand{\ubar}[1]{\underaccent{\bar}{#1}}
\newcommand{\et}{\quad\mbox{and}\quad}
\newcommand{\bQ}{\mathbb{Q}}
\newcommand{\bR}{\mathbb{R}}
\newcommand{\bZ}{\mathbb{Z}}
\newcommand{\cC}{{\mathcal{C}}}
\newcommand{\cE}{{\mathcal{E}}}
\newcommand{\cF}{{\mathcal{F}}}
\newcommand{\cK}{{\mathcal{K}}}
\newcommand{\cO}{{\mathcal{O}}}
\newcommand{\cS}{{\mathcal{S}}}
\newcommand{\cU}{{\mathcal{U}}}
\newcommand{\disp}{\displaystyle}
\newcommand{\dist}{\mathrm{dist}}
\newcommand{\image}{\mathrm{Im}}
\newcommand{\pbot}{{\ubar{\alpha}}}
\newcommand{\ptop}{{\bar{\alpha}}}
\newcommand{\phibot}{{\ubar{\varphi}}}
\newcommand{\phitop}{{\bar{\varphi}}}
\newcommand{\psibot}{{\ubar{\psi}}}
\newcommand{\psitop}{{\bar{\psi}}}
\newcommand{\tA}{\tilde{A}}
\newcommand{\tC}{\tilde{C}}
\newcommand{\tcE}{\tilde{\cE}}
\newcommand{\tell}{\tilde{\ell}}
\newcommand{\tE}{\tilde{E}}
\newcommand{\tk}{\tilde{k}}
\newcommand{\tP}{\tilde{P}}
\newcommand{\tq}{\tilde{q}}
\newcommand{\tu}{\tilde{u}}
\newcommand{\tv}{\tilde{v}}
\newcommand{\tuP}{{\tilde\uP}}
\newcommand{\tuR}{{\tilde{\uR}}}
\newcommand{\tua}{\tilde{\ua}}
\newcommand{\ua}{\mathbf{a}}
\newcommand{\ub}{\mathbf{b}}
\newcommand{\uc}{\mathbf{c}}
\newcommand{\ue}{\mathbf{e}}
\newcommand{\uf}{\mathbf{f}}
\newcommand{\uL}{\mathbf{L}}
\newcommand{\uP}{{\mathbf{P}}}
\newcommand{\uR}{{\mathbf{R}}}
\newcommand{\uS}{{\mathbf{S}}}
\newcommand{\uu}{\mathbf{u}}
\newcommand{\uv}{\mathbf{v}}
\newcommand{\uw}{\mathbf{w}}
\newcommand{\ux}{\mathbf{x}}
\newcommand{\uy}{\mathbf{y}}
\newcommand{\uz}{\mathbf{z}}
\newcommand{\ualpha}{{\boldsymbol{\alpha}}}
\newcommand{\rien}[1]{} 
\begin{document}

\baselineskip=15.1pt

\title[On Diophantine approximation Spectra]
{On the topology of Diophantine approximation Spectra}
\author{Damien ROY}
\address{
   D\'epartement de Math\'ematiques\\
   Universit\'e d'Ottawa\\
   585 King Edward\\
   Ottawa, Ontario K1N 6N5, Canada}
\email{droy@uottawa.ca}
\subjclass[2000]{Primary 11J13; Secondary 11J82}
\thanks{Work partially supported by NSERC}

\begin{abstract}
Fix an integer $n\ge 2$.  To each non-zero point $\uu$ in $\bR^n$,
one attaches several numbers called \emph{exponents of Diophantine
approximation}.  However, as Khintchine first observed, these numbers
are not independent of each other.  This raises the problem of
describing the set of all possible values that a given family of
exponents can take by varying the point $\uu$.  To avoid
trivialities, one restricts to points $\uu$ whose coordinates
are linearly independent over $\bQ$.
The resulting set of values is called the \emph{spectrum of these
exponents}.  We show that, in an appropriate setting,
any such spectrum is a compact connected set.  In the case $n=3$, we
prove moreover that it is a semi-algebraic set closed under
component-wise minimum.
For $n=3$, we also obtain a description of the spectrum of the
exponents $(\phibot_1,\phibot_2,\phibot_3,\phitop_1,\phitop_2,\phitop_3)$
recently introduced by Schmidt and Summerer.
\end{abstract}

\maketitle

\section{Introduction}
\label{sec:intro}

The recent advances in parametric geometry of numbers have led
to an abundance of new results concerning the spectra of various
families of exponents of Diophantine approximation.  We describe
these notions below with a quick overview of the known results,
using the formalism of parametric geometry of numbers and
the notation of \cite{R2015}.  Then we
present new general properties of the spectra, mostly topological,
and discuss a particular spectrum in detail.

Fix an integer $n\ge 2$ and a non-zero point $\uu\in\bR^n$.
Then, consider the parametric family of convex bodies
of $\bR^n$ given by
\[
 \cC_\uu(q)
  =\{\ux\in\bR^n\,;\,
     \text{$\|\ux\|\le 1$ and $|\ux\cdot\uu|\le e^{-q}$}\}
  \quad (q\ge 0),
\]
where $\ux\cdot\uu$ denotes the standard scalar product of $\ux$
and $\uu$ in $\bR^n$, and $\|\ux\|=|\ux\cdot\ux|^{1/2}$
is the Euclidean norm of $\ux$.  For each $i=1,\dots,n$ and
each $q\ge 0$, set
\[
 L_{\uu,i}(q)=\log \lambda_i(\cC_\uu(q),\bZ^n)
\]
where $\lambda_i(\cC_\uu(q),\bZ^n)$ is the $i$-th minimum
of $\cC_\uu(q)$ with respect to the lattice $\bZ^n$, namely
the smallest positive real number $\lambda$ such that
$\lambda\cC_\uu(q)$ contains at least $i$ linearly independent
points of $\bZ^n$.
In 1982, using a slightly different but equivalent setting,
Schmidt noted that, for the purpose of
Diophantine approximation, it would be important
to understand the behavior of the maps
$\uL_\uu\colon[0,\infty)\to\bR^n$ given by
\[
 \uL_\uu(q)=(L_{\uu,1}(q),\dots,L_{\uu,n}(q))
 \quad (q\ge 0)
\]
(see \cite{Sc1982}).  Transposed to the present setting,
his preliminary observations can be summarized as follows.
We have $L_{\uu,1}(q)\le\cdots\le L_{\uu,n}(q)$ for each $q\ge 0$
and, by Minkowski's second convex body theorem, the function
$L_{\uu,1}(q)+\cdots+L_{\uu,n}(q)-q$ is bounded on $[0,\infty)$.
Moreover, each $L_{\uu,i}$ is a continuous piecewise linear map
with slopes $0$ and $1$.  In the same paper \cite{Sc1982}, he also made
a conjecture which was solved by Moschevitin \cite{Mo2012a}
(see also \cite{K2016}).

In \cite{SS2009, SS2013a}, Schmidt and
Summerer established further properties of the map $\uL_\uu$
which, by \cite{R2015}, completely characterize
these functions within the set of all functions from $[0,\infty)$
to $\bR^n$, modulo bounded functions.
They also introduced the quantities
\[
 \phibot_i(\uu)=\liminf_{q\to\infty} \frac{1}{q}L_{\uu,i}(q),
 \quad
 \phitop_i(\uu)=\limsup_{q\to\infty} \frac{1}{q}L_{\uu,i}(q)
 \quad (1\le i\le n).
\]
Revisiting work of Schmidt in \cite{Sc1967}, Laurent \cite{La2009b}
defined additional quantities related to
\[
 \psibot_i(\uu)=\liminf_{q\to\infty} \frac{1}{q}\sum_{j=1}^i L_{\uu,j}(q),
 \quad
 \psitop_i(\uu)=\limsup_{q\to\infty} \frac{1}{q}\sum_{j=1}^i L_{\uu,j}(q)
 \quad (1\le i< n).
\]
All of these are called \emph{exponents of Diophantine approximation}
because they appear as critical exponents in problems of Diophantine
approximation.  Of particular interest are the exponents
\[
 \phibot_1=\psibot_1, \quad
 \phitop_1=\psitop_1, \quad
 \phibot_n=1-\psitop_{n-1}
 \et
 \phitop_n=1-\psibot_{n-1}.
\]
Their \emph{spectrum} is the set of all quadruples
$(\phibot_1(\uu), \phitop_1(\uu), \phibot_n(\uu), \phitop_n(\uu))$
where $\uu$ runs through the points of $\bR^n$ whose
coordinates are linearly independent over $\bQ$.
It is easily described for $n=2$.
For $n=3$, it was determined by Laurent in \cite{La2009}.
Moreover, the spectra of the following general
families are known:
\begin{itemize}
 \item $(\phibot_1,\phitop_n)$ : the constraints come from
   Khintchine's transference principle \cite{Kh1926a, Kh1926b},
   and constructions of Jarn\'{\i}k in \cite{Ja1936a, Ja1936b}
   show that they are optimal;
 \smallskip
 \item $(\psibot_1,\psibot_2,\dots,\psibot_{n-1})$ :
   the constraints by Schmidt \cite{Sc1967} and
   Laurent \cite{La2009b} describe the full spectrum
  \cite{R2016};
 \smallskip
 \item $(\phitop_1,\phibot_n)$ : the constraints by
   Jarn\'{\i}k \cite{Ja1938} for $n=3$, and by
   German \cite{Ge2012} for $n\ge 4$ are optimal
   (see Schmidt and Summerer \cite{SS2016a}) and describe
   the full spectrum (see Jarn\'{\i}k \cite{Ja1954} for $n=3$
   and Marnat \cite{M2016} for $n\ge 4$).
\end{itemize}
For $n=4$, we also know optimal constraints on the spectrum of
$(\phibot_n,\phitop_n)$ thanks to \cite{Mo2012b}
   and \cite{SS2013b},
as well as for the spectrum of $(\phibot_1,\phitop_1)$
thanks to \cite{SS2013b}.

We propose the following notion as a general
framework to study the spectra of such families
of exponents of Diophantine approximation.

\begin{definition}
\label{intro:def:image_muT}
Let\, $T=(T_1,\dots,T_m)\colon\bR^n\to\bR^m$ be a linear map.
For each non-zero point $\uu\in\bR^n$, we define
\[
 \mu_T(\uL_\uu)
  = \Big(
    \liminf_{q\to\infty} q^{-1}T_1(\uL_\uu(q)),
    \dots,
    \liminf_{q\to\infty} q^{-1}T_m(\uL_\uu(q))
    \Big).
\]
We denote by $\image(\mu_T)$ the image of $\mu_T$, that is
the set of all $m$-tuples $\mu_T(\uu)$ where $\uu$ runs through
the non-zero points of $\bR^n$.  The \emph{spectrum} of $\mu_T$
is its subset, denoted $\image^*(\mu_T)$, consisting of
the $m$-tuples $\mu_T(\uu)$ where $\uu$ runs through
the points of $\bR^n$ with $\bQ$-linearly independent
coordinates.
\end{definition}

For example, the spectrum of the exponents
$(\phibot_1,\dots,\phibot_n,\phitop_1,\dots,\phitop_n)$,
whose definition involves both inferior and superior limits,
can be expressed as $\sigma\big(\image^*(\mu_T)\big)$
where $T\colon\bR^n\to\bR^{2n}$ and
$\sigma\colon\bR^{2n}\to\bR^{2n}$ are the linear maps
given by $T(\ux)=(\ux,-\ux)$ and $\sigma(\ux,\uy)=(\ux,-\uy)$
for any $\ux, \uy \in \bR^n$. Our first main result below
implies that it is a compact and connected subset of $\bR^{2n}$.

\begin{theorem}
\label{intro:thm:muT}
Let\, $T\colon\bR^n\to\bR^m$ be a linear map.
Then $\image^*(\mu_T)$ is
a compact connected subset of $\bR^m$.  The set $\image(\mu_T)$
is also compact but in general not connected.
\end{theorem}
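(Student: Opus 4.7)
The plan is to pass from the points $\uu$ to the combinatorial objects they determine, namely the maps characterised by Schmidt, Summerer and Roy in \cite{SS2013a, R2015} as the so-called $n$-systems. By the main theorem of \cite{R2015}, a map $f\colon[0,\infty)\to\bR^n$ lies at bounded distance from some $\uL_\uu$ with $\bQ$-linearly independent coordinates if and only if $f$ is at bounded distance from an $n$-system. Since $\mu_T(f)$ is unchanged when $f$ is modified by a bounded perturbation, $\image^*(\mu_T)$ coincides with the image of $\mu_T$ on the class $\cP_n$ of all $n$-systems, and the problem reduces to showing that this image is compact and connected.

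Compactness splits into boundedness and closedness. Boundedness follows from Minkowski's second theorem, which gives the uniform estimate $|L_{\uu,i}(q)|\le q+O(1)$, hence uniform bounds for the coordinates of $\mu_T(\uu)$. For closedness, given a sequence $\uP_k\in\cP_n$ with $\mu_T(\uP_k)\to\ua$, I would select for each $k$ a divergent sequence $(q_{k,\ell})_\ell$ along which $q_{k,\ell}^{-1}T_j(\uP_k(q_{k,\ell}))$ simultaneously approaches the $j$-th coordinate of $\mu_T(\uP_k)$ for every $j$, rescale the variable so that $q_{k,\ell}=R^\ell$ for a fixed $R>1$, apply Arzelà--Ascoli on each interval $[R^\ell,R^{\ell+1}]$, and assemble a diagonal limit which is itself an $n$-system $\uP\in\cP_n$ with $\mu_T(\uP)=\ua$.

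The main obstacle is \emph{connectedness}, which I would prove by establishing path-connectedness of the image on $\cP_n$. Given two $n$-systems $\uP_0,\uP_1\in\cP_n$, for each $s\in[0,1]$ I would construct $\uP_s\in\cP_n$ by pasting templates drawn from $\uP_0$ and $\uP_1$ on a sequence of exponentially growing intervals $[R^k,R^{k+1}]$, in proportions whose cumulative average on $[1,Q]$ converges to a value $\rho(s)\in[0,1]$ depending continuously on $s$, with $\rho(0)=0$ and $\rho(1)=1$. The exponential spacing guarantees that $\mu_T(\uP_s)$ depends only on the long-run proportions and is a continuous function of $\rho(s)$, yielding a continuous path from $\mu_T(\uP_0)$ to $\mu_T(\uP_1)$. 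The key technical point is that the concatenated templates must satisfy the combinatorial axioms of an $n$-system across every transition; this is achieved by inserting short linear bridges of controlled length and slope at each join, relying on the flexibility of $n$-systems established in \cite{R2015}.

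For the last assertion, the non-connectedness of $\image(\mu_T)$ is witnessed by points $\uu$ whose coordinates span a proper $\bQ$-rational subspace of $\bR^n$: for such $\uu$, the map $\uL_\uu$ is eventually affine with integer slopes, producing values of $\mu_T$ that are isolated from the closure $\image^*(\mu_T)$ of the generic part. Already $\uu=\ue_n$ for $n=2$ yields such an isolated value, witnessing the failure of connectedness in a typical case.
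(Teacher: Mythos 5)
Your reduction to $n$-systems and the treatment of boundedness and of the non-connectedness of $\image(\mu_T)$ are consonant with what the paper does, but both of your core arguments --- for closedness and for connectedness --- contain gaps that the paper resolves by a different route.

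For closedness, the difficulty begins with the setup: there is in general no single divergent sequence $(q_{k,\ell})_\ell$ along which \emph{all} coordinates of $\mu_T(\uP_k)$ are attained simultaneously, because each $\liminf_{q\to\infty} q^{-1}T_j(\uP_k(q))$ may be realized along a different sequence of $q$'s. Even granting such a sequence, a diagonal Arzel\`a--Ascoli limit $\uP$ controls the rescaled values at the selected times but says nothing about the global $\liminf$; the limiting system could acquire new small accumulation points in $\cF(\uP)$ that pull $\mu_T(\uP)$ strictly below $\ua$, or could fail to attain $\ua$ at all, and there is no mechanism in your construction that rules this out. The paper avoids this by arguing not along one sequence but with the full set $\cF(\uP)$ of accumulation points of $q^{-1}\uP(q)$ (Definition~\ref{intro:def:F(Lu)}, Proposition~\ref{spectra:prop:cvxE}) and by an explicit controlled gluing (Theorem~\ref{composite:thm}) that produces a rigid system $\uR$ with $\cF(\uR)=\limsup_i\cF(\uR^{(i)})$ exactly, so that $\mu_T(\uR)=\inf T(\cF(\uR))$ is computable; Corollary~\ref{composite:cor3} then gives compactness.

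For connectedness, the proposed interpolation $s\mapsto \uP_s$ by pasting templates in long-run proportion $\rho(s)$ does not yield a continuous path. The value $\mu_T(\uP_s)=\inf T(\cF(\uP_s))$ depends only on the set of accumulation points $\cF(\uP_s)$ and not on asymptotic frequencies: as soon as both families of templates occur infinitely often (in particular for every $s$ in the interior of $[0,1]$), $\cF(\uP_s)$ already contains the relevant accumulation points of both $\uP_0$ and $\uP_1$, hence $\mu_T(\uP_s)=\min\{\mu_T(\uP_0),\mu_T(\uP_1)\}$ \emph{independently} of $\rho(s)$, and the path jumps to $\mu_T(\uP_0)$ at $s=0$ and to $\mu_T(\uP_1)$ at $s=1$. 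This is in fact the mechanism behind the paper's Corollary~\ref{3sys:cor2} (closure under minimum), not a way to interpolate. The paper's connectedness argument is genuinely different: Corollary~\ref{spectra:cor:prop:P^nu} shows that for every \emph{self-similar} proper system $\uP$, the one-parameter family $\uP^\lambda$ obtained via the monotone coordinate reparametrization $a\mapsto a^\lambda$ gives a continuous path $\lambda\mapsto\mu_T(\uP^\lambda)$ from $\mu_T(\uP)$ to the fixed point $n^{-1}T(\ue)$ as $\lambda\to 0^+$; combined with the density of self-similar systems in the spectrum (Theorem~\ref{self:thm}, Corollary~\ref{self:cor}), this yields connectedness. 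You would need to replace your mixing construction by something of this kind (or another deformation that genuinely moves $\cF$ continuously) for the argument to go through.
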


As above, this implies that the spectrum of
$(\psibot_1,\dots,\psibot_{n-1},\psitop_1,\dots,\psitop_{n-1})$
is compact and connected.   The proof of the theorem
uses the following notions.

\begin{definition}
\label{intro:def:F(Lu)}
Let $\uu$ be a non-zero point of $\bR^n$. We denote by
$\cF(\uL_\uu)$ the set of all points $\ux\in\bR^n$ for which
there exists a strictly increasing unbounded sequence
of positive real numbers $(q_i)_{i\ge 1}$ such that
$q_i^{-1}\uL_\uu(q_i)$ converges to $\ux$ as $i\to\infty$.
We also denote by $\cK(\uL_\uu)$ the convex hull of
$\cF(\uL_\uu)$.
\end{definition}

So, $\cF(\uL_\uu)$ and $\cK(\uL_\uu)$ are compact subsets
of $[0,1]^n$ and,
in the notation of Theorem \ref{intro:thm:muT}, we have
\[
 \mu_T(\uL_\uu)
   =\big(\inf T_1(\cF(\uL_\uu)),\dots,\inf T_m(\cF(\uL_\uu))\big).
\]
To write this formula in a more compact way, we
use the \emph{coordinate-wise partial ordering} on $\bR^m$,
where, for any two points $\ux$ and $\uy$
in $\bR^n$, we have $\ux\le \uy$ if and only if all
coordinates of $\uy-\ux$ are $\ge 0$.
For this ordering, any bounded subset $F$ of $\bR^m$
has a greatest lower bound denoted $\inf(F)$. For each
$i=1,\dots,m$, its $i$-th coordinate is the infimum
of the set of $i$-th coordinates of the points of $F$.
Then the above formula simply becomes
\[
 \mu_T(\uL_\uu)
   =\inf T(\cF(\uL_\uu)).
\]
Moreover, the infimum of a bounded subset $S$ of $\bR^m$ is
the same as the infimum of the convex hull of $S$,
and a linear map $T\colon\bR^n\to\bR^m$ sends the convex hull
of a bounded subset $F$ of $\bR^n$ to the convex hull of
$T(F)$.  Therefore, we also have
\begin{equation}
\label{intro:eq:muT=infTK}
\mu_T(\uL_\uu)=\inf T(\cK(\uL_\uu)).
\end{equation}
In the case of dimension $n=3$, we obtain the following result.

\begin{theorem}
\label{intro:thm:u,v,w}
For each pair of points $\uv,\uw$ in $\bR^3$ having
$\bQ$-linearly independent coordinates, there exists a point
$\uu$ of $\bR^3$ which also has $\bQ$-linearly independent
coordinates, such that $\cK(\uL_\uu)$ is the
convex hull of $\cK(\uL_\uv)\cup\cK(\uL_\uw)$.
\end{theorem}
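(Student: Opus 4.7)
The strategy is to reduce to a construction of abstract $3$-systems. Recall from \cite{R2015} that a non-zero $\uu\in\bR^3$ with $\bQ$-linearly independent coordinates can be produced whose function $\uL_\uu$ differs by a bounded amount from any prescribed $3$-system $\uL$, provided $\uL$ meets a suitable non-degeneracy condition. Since $\cK$ is invariant under bounded perturbations of its argument, the task reduces to constructing a $3$-system $\uL:[q_0,\infty)\to\bR^3$ with $\cK(\uL)=\mathrm{conv}(\cK(\uL_\uv)\cup\cK(\uL_\uw))$.

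One builds $\uL$ by interleaving rescaled copies of $\uL_\uv$ and $\uL_\uw$. Start by choosing parameter sequences $s_k,t_k\to\infty$ along which $s_k^{-1}\uL_\uv(s_k)$ and $t_k^{-1}\uL_\uw(t_k)$ are dense in $\cF(\uL_\uv)$ and $\cF(\uL_\uw)$, respectively. Then define $\uL$ on successive parameter blocks by setting, on the $k$-th odd block, $\uL(q)=c_k\uL_\uv(q/c_k)+\mathrm{const}$ over a window where $q/c_k$ ranges in a large neighbourhood of $s_k$, and analogously on even blocks using $\uL_\uw$ and $t_k$. The scaling factor $c_k$ ensures that $q^{-1}\uL(q)\approx s_k^{-1}\uL_\uv(s_k)$ (resp.\ $\approx t_k^{-1}\uL_\uw(t_k)$) at the centre of the block. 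Rescaling preserves the slope-$\{0,1\}$ property and the Minkowski trace relation $L_1+L_2+L_3=q+O(1)$ up to an error that is itself $O(1)$ within each block; the additive constants are chosen so that $\uL$ is continuous across blocks, except that between two consecutive blocks one inserts a short \emph{transition piece}. If the transitions have length negligible compared to the ambient parameter, they contribute no accumulation point outside $\mathrm{conv}(\cK(\uL_\uv)\cup\cK(\uL_\uw))$ to $\cF(\uL)$. One then checks that $\cF(\uL)$ contains the dense subsets $\{s_k^{-1}\uL_\uv(s_k)\}$ and $\{t_k^{-1}\uL_\uw(t_k)\}$ and is itself contained in $\cK(\uL_\uv)\cup\cK(\uL_\uw)$ together with some connecting segments, so that taking convex hulls gives $\cK(\uL)=\mathrm{conv}(\cK(\uL_\uv)\cup\cK(\uL_\uw))$.

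The principal obstacle is the \emph{construction of the transitions}: given prescribed endpoints approximately satisfying the Minkowski trace condition, one must produce a short $3$-system joining them with slopes in $\{0,1\}$ and with all the Schmidt-Summerer axioms met, whose length is controllable in terms of the distance between the endpoints. In dimension three the combinatorial shapes available to a $3$-system at a given state are sufficiently restricted that one can explicitly enumerate the possible local transitions and verify that such a bridge always exists with relative length tending to zero along the induction. This is precisely where the hypothesis $n=3$ is used, since in higher dimensions the combinatorial space of $n$-system shapes is richer and such straightforward interleaving is not expected to work. The non-degeneracy of the constructed $\uL$ (it is unbounded in each coordinate and is not eventually linear) guarantees, via the realization theorem of \cite{R2015}, that the resulting $\uu$ can be chosen with $\bQ$-linearly independent coordinates, which completes the proof.
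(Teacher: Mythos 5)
Your high-level reduction to abstract $3$-systems via \cite{R2015} is the same as the paper's, but the central claim---that between a block modelled on $\uL_\uv$ and one modelled on $\uL_\uw$ one can insert a transition of ``relative length tending to zero''---is false in general, and that is exactly where the real difficulty lies. Because an $n$-system has slopes in $\{0,1\}$ (so each $P_j$ is non-decreasing and $1$-Lipschitz) and satisfies $P_1+\cdots+P_n=q$, if the normalized state is $\ux$ at parameter $u$ and $\uy$ at $v>u$ then the constraints $0\le P_j(v)-P_j(u)\le v-u$ force $(u/v)\,x_j\le y_j\le 1-(u/v)(1-x_j)$ for every $j$; consequently $\|\ux-\uy\|_\infty\ge\delta$ forces $v/u\ge(1-\delta)^{-1}$. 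So a transition whose endpoints are unrelated points of $\cF(\uL_\uv)$ and $\cF(\uL_\uw)$ has length comparable to the ambient parameter, during it the ratios $q^{-1}\uL(q)$ trace a genuine polygonal arc across $\bar\Delta^{(3)}$, and the containment of that arc in $\mathrm{conv}(\cK(\uL_\uv)\cup\cK(\uL_\uw))$ is precisely what must be proved, not assumed. Enumerating the local combinatorial shapes of a $3$-system at a single state does not help, since the obstruction is global (the two states are far apart in $\bar\Delta^{(3)}$).

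The paper closes this gap by a different mechanism. The gluing machinery of Section~\ref{sec:composite} (Theorem~\ref{composite:thm}) does exactly what you describe---rescale, deform via Proposition~\ref{deformation:prop:extension} and Lemma~\ref{deformation:lemma:RP}, and concatenate---but it \emph{requires} the hook states $\ux^{(i)}\in\cF(\uR^{(i)},\ue_1)$ at which the pieces are welded to converge; for alternating copies of two arbitrary systems this would force $\cF(\uL_\uv,\ue_1)\cap\cF(\uL_\uw,\ue_1)\neq\emptyset$, which need not hold (Corollary~\ref{composite:cor1} treats precisely that special case). The genuinely $3$-dimensional step, entirely absent from your sketch, is in Section~\ref{sec:3sys}: Proposition~\ref{3sys:prop:chains} identifies the sets $\cF(\uS)$ for self-similar non-degenerate $3$-systems with ``closed chains'' in $\bar\Delta^{(3)}$, and Corollary~\ref{3sys:cor1} then \emph{geometrically} fuses two such chains into a third chain $\cF(\uS)$ sandwiched between $F=\cF(\uS^{(1)})\cup\cF(\uS^{(2)})$ and $K=\mathrm{conv}(F)$, by inserting connector edges whose endpoints lie on the two original chains and hence already in $K$. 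Only then does one approximate a general proper $3$-system by self-similar ones (Theorem~\ref{self:thm}) and pass to the limit with Corollary~\ref{composite:cor2}. Your proof would need an argument of this latter kind---transitions that provably stay inside the convex hull---rather than transitions claimed to be short.
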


Then, for a linear map $T\colon\bR^3\to\bR^m$, we find,
using \eqref{intro:eq:muT=infTK}, that
\[
 \mu_T(\uL_\uu)
  =\inf \left(T(\cK(\uL_\uv))\cup T(\cK(\uL_\uw))\right)
  =\min \left\{\mu_T(\uL_\uv),\, \mu_T(\uL_\uw)\right\}.
\]
This gives the following result.

\begin{corollary}
Let $T\colon\bR^3\to\bR^m$ be a linear map.
For any $\ux,\uy$ in $\image^*(\mu_T)$, the point $\min\{\ux,\uy\}$
also belongs to $\image^*(\mu_T)$.
\end{corollary}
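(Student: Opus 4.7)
The plan is to apply Theorem \ref{intro:thm:u,v,w} directly; the rest is essentially a bookkeeping exercise based on the formula \eqref{intro:eq:muT=infTK} and elementary properties of the coordinate-wise infimum.

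First, I would unwind the definition of $\image^*(\mu_T)$ to produce points $\uv, \uw \in \bR^3$ with $\bQ$-linearly independent coordinates satisfying $\mu_T(\uL_\uv)=\ux$ and $\mu_T(\uL_\uw)=\uy$. Then I would apply Theorem \ref{intro:thm:u,v,w} to this pair $(\uv,\uw)$ to obtain a point $\uu\in\bR^3$ with $\bQ$-linearly independent coordinates whose convex set $\cK(\uL_\uu)$ is the convex hull of $\cK(\uL_\uv)\cup\cK(\uL_\uw)$. This $\uu$ is the candidate witness for $\min\{\ux,\uy\}\in\image^*(\mu_T)$.

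It then remains to compute $\mu_T(\uL_\uu)$. Using \eqref{intro:eq:muT=infTK}, I would write $\mu_T(\uL_\uu)=\inf T(\cK(\uL_\uu))$, and exploit the fact that a linear map sends the convex hull of a union to the convex hull of the union of the images, so that $T(\cK(\uL_\uu))$ is the convex hull of $T(\cK(\uL_\uv))\cup T(\cK(\uL_\uw))$. Since the coordinate-wise infimum of a bounded set equals that of its convex hull, and since the infimum of a union is the coordinate-wise minimum of the two separate infima, another application of \eqref{intro:eq:muT=infTK} to each of $\uv$ and $\uw$ gives
\[
 \mu_T(\uL_\uu)
  =\min\bigl\{\inf T(\cK(\uL_\uv)),\,\inf T(\cK(\uL_\uw))\bigr\}
  =\min\{\mu_T(\uL_\uv),\mu_T(\uL_\uw)\}
  =\min\{\ux,\uy\}.
\]

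There is no real obstacle here: the entire content of the corollary has been packaged into Theorem \ref{intro:thm:u,v,w}, which is assumed available, and the reformulation \eqref{intro:eq:muT=infTK} translates the statement about $\cK(\uL_\uu)$ into one about $\mu_T$. The only care needed is to record that the point $\uu$ supplied by Theorem \ref{intro:thm:u,v,w} has $\bQ$-linearly independent coordinates, so that it qualifies as a witness for membership in $\image^*(\mu_T)$ rather than merely in $\image(\mu_T)$.
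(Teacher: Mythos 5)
Your proposal is correct and follows exactly the same route as the paper: choose witnesses $\uv,\uw$, invoke Theorem \ref{intro:thm:u,v,w} to obtain $\uu$ with $\cK(\uL_\uu)=\mathrm{conv}\big(\cK(\uL_\uv)\cup\cK(\uL_\uw)\big)$, and then compute $\mu_T(\uL_\uu)$ via \eqref{intro:eq:muT=infTK} together with the facts that linear maps and coordinate-wise infima both commute with taking convex hulls. Nothing to add.
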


As Theorem \ref{intro:thm:muT} shows that $\image^*(\mu_T)$
is compact, it follows that this spectrum contains the infimum
of any of its subsets.  It would be interesting to know if this
property extends to the linear maps $T\colon\bR^n\to\bR^m$
with $n>3$.

In all cases where we know the complete spectrum of a family of
$m$ exponents of approximation, it appears to be a
semi-algebraic subset of $\bR^m$, that is a subset of $\bR^m$
defined by polynomial equalities and inequalities.  Here,
we show that this is true of any spectrum in dimension $n=3$.

\begin{theorem}
\label{intro:thm:semi}
For any $m\ge 1$ and any linear map $T\colon\bR^3\to\bR^m$,
the spectrum $\image^*(\mu_T)$ is a semi-algebraic subset
of $\bR^m$.
\end{theorem}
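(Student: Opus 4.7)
The plan is to combine the formula $\mu_T(\uL_\uu)=\inf T(\cK(\uL_\uu))$ from equation \eqref{intro:eq:muT=infTK} with a semi-algebraic parameterization of the family of convex compact sets $\{\cK(\uL_\uu)\}$ that arise for $n=3$, and then to conclude by the Tarski--Seidenberg theorem. The first and main step is to establish that, for $n=3$, there is a fixed integer $N$ such that every $\cK(\uL_\uu)$ is a convex polygon with at most $N$ vertices contained in the compact affine triangle $\Delta=\{\ux\in\bR^3:x_1+x_2+x_3=1,\ 0\le x_1\le x_2\le x_3\}$, and that the set $P\subseteq \Delta^N$ of vertex tuples arising from points $\uu$ with $\bQ$-linearly independent coordinates is semi-algebraic.

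This first step is where the restriction to $n=3$ is essential and where the bulk of the difficulty lies. I would attack it using the detailed classification of the trajectories $\uL_\uu$ modulo bounded perturbation provided by parametric geometry of numbers (Schmidt--Summerer, Roy), together with the explicit description of the spectrum of the six-tuple $(\phibot_1,\phibot_2,\phibot_3,\phitop_1,\phitop_2,\phitop_3)$ announced in the abstract of this paper. Concretely, I expect each extreme point of $\cK(\uL_\uu)$ to arise from an asymptotic pattern of slopes of $\uL_\uu$ between consecutive breakpoints, so that the vertices can be expressed as semi-algebraic functions of a finite list of Schmidt--Summerer-type parameters, and the range of these parameters forms a semi-algebraic set. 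Theorem \ref{intro:thm:u,v,w} and its corollary give some support: they show the family is stable under coordinate-wise minima, so one may reasonably hope to reduce to a smaller, semi-algebraically described collection of ``building-block'' polygons.

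Once such a parameterization $P$ is available, consider the map $\Phi\colon P\to\bR^m$ sending a tuple $(v_1,\dots,v_N)$ to the point of $\bR^m$ whose $i$-th coordinate is $\min_{1\le j\le N}T_i(v_j)$. Each coordinate of $\Phi$ is a minimum of finitely many affine functions of the $v_j$, so $\Phi$ is a semi-algebraic map. By equation \eqref{intro:eq:muT=infTK}, we have $\image^*(\mu_T)=\Phi(P)$, and since $P$ is semi-algebraic, the Tarski--Seidenberg theorem implies that $\image^*(\mu_T)$ is a semi-algebraic subset of $\bR^m$. In summary, the last two steps are routine once the structural Step 1 is in hand, and that step is the substantive obstacle.
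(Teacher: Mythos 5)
Your Step 1 asks for a uniform bound $N$ on the number of vertices of the polygons $\cK(\uL_\uu)$, and this is the step that fails. As the paper shows (Proposition \ref{3sys:prop:chains} and Proposition \ref{semi:prop}), for a self-similar non-degenerate $3$-system $\uS$ the set $\cF(\uS)$ is a \emph{closed chain}, i.e.\ a concatenation of simple chains $A\, A^*_1\, C^*_1\cdots A^*_g\, C^*_g\, C_h\, A_h\cdots C_1\, A_1$, and the lengths $g,h$ as well as the number of simple chains are unbounded over the family. Correspondingly $\cK(\uS)$ is the convex hull of a finite union of strict elementary paths, but the number of paths in that union is not bounded, so there is no fixed $N$ as you require. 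Your plan to derive such a bound from ``the explicit description of the spectrum of the six-tuple'' is also circular: that description (Theorem \ref{intro:thm:S}, proved via Theorem \ref{six:thm:ineq}) relies on Theorem \ref{semi:thm}, which is itself the engine of the semi-algebraicity proof you are trying to write.

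The idea you are missing is that one should not attempt to parameterize $\cK(\uP)$ itself, but only the data relevant to the \emph{finitely many} linear forms $T_1,\dots,T_m$. The paper's Theorem \ref{semi:thm} shows that $\uy\in\image^*(\mu_T)$ if and only if there exist $m$ elementary paths $\cE_1,\dots,\cE_m$ (each a $6$-vertex polygonal chain $AA^*B^*C^*CBA$ of a single fixed combinatorial type) with $y_j=\inf T_j(\cE_j)$ and $y_j\le\inf T_j(\cE_i)$ for all $i,j$. The forward direction uses Proposition \ref{semi:prop} to decompose $\cK(\uS)$ into elementary paths and then picks, for each $j$, the path on which $\inf T_j$ is realized; the backward direction glues the chosen paths back into a single system via Corollary \ref{3sys:cor1} and Corollary \ref{3sys:cor2}. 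This reduces membership in $\image^*(\mu_T)$ to a first-order formula in the $6m$ vertex points and the $m$ coordinates of $\uy$, all ranging over a semi-algebraic set; Tarski--Seidenberg then finishes, exactly as in your last step. So the outer structure of your argument (parameterize, then project) is right, but the object being parameterized must be the $m$-tuple of elementary paths, not the polygon $\cK(\uL_\uu)$.
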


Jarn\'{\i}k showed in \cite{Ja1938} that, in dimension $n=3$,
the spectrum of $(\phibot_3,\phitop_1)$ is contained in the
arc of an algebraic curve
\[
 J=\{(x,y)\in[1/3,1/2]\times[0,1/3]\,;\, (1-2x)(1-2y)=xy\}
\]
and, in \cite{Ja1954}, that it is equal to $J$.  As mentioned
above, this remarkable result was extended by Laurent in
\cite{La2009} to a complete description of the spectrum
of $(\phibot_1,\phibot_3,\phitop_1,\phitop_3)$.  Our last main
result deals with the full family $(\phibot_1,\phibot_2,\dots,\phitop_3)$.

\begin{theorem}
\label{intro:thm:S}
Suppose that $n=3$. Then the spectrum $\cS$ of
$(\phibot_1,\phibot_2,\phibot_3,\phitop_1,\phitop_2,\phitop_3)$
is a semi-algebraic subset of $\bR^6$ defined by polynomial
inequalities with coefficients in $\bQ$ within
$\bR^2\times J\times\bR^2$.  It is moreover the topological
closure of a non-empty open subset of $\bR^2\times J\times\bR^2$.
\end{theorem}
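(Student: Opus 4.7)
The plan is to combine the general Theorems~\ref{intro:thm:muT}, \ref{intro:thm:u,v,w} and~\ref{intro:thm:semi} with a detailed analysis of the possible shapes of $\cK(\uL_\uu)$ in dimension $n=3$.

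I begin by translating the problem into the geometry of $\cK(\uL_\uu)$. Applying \eqref{intro:eq:muT=infTK} to $T_i(\ux)=x_i$, and a symmetric argument with $T_i(\ux)=-x_i$ for superior limits, gives
\[
 \phibot_i(\uu)=\min\{x_i : \ux\in\cK(\uL_\uu)\},
 \qquad
 \phitop_i(\uu)=\max\{x_i : \ux\in\cK(\uL_\uu)\}
\]
for $1\le i\le 3$. By Minkowski's second theorem and the ordering of successive minima, $\cK(\uL_\uu)$ is a compact convex subset of the $2$-simplex $\Delta=\{\ux\in\bR^3 : x_1+x_2+x_3=1,\ 0\le x_1\le x_2\le x_3\}$. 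Combining the Jarn\'{\i}k--Laurent relation $(\phibot_3,\phitop_1)\in J$ with the monotonicity bounds $\phibot_1\le\phibot_2\le\phibot_3$ and $\phitop_1\le\phitop_2\le\phitop_3$, together with the trivial $\phibot_i\le\phitop_i$, already yields $\cS\subseteq\bR^2\times J\times\bR^2$.

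Next I derive the explicit polynomial description of $\cS$ within this ambient variety. Using the classification of the maps $\uL_\uu$ as rigid $3$-systems up to bounded error from~\cite{R2015}, I would show that the compact convex sets $\cK(\uL_\uu)$ admit a parameterization by a bounded number of real parameters subject to finitely many algebraic relations coming from the Schmidt--Summerer transference inequalities. Applying Tarski--Seidenberg elimination to the map sending such a convex set $K$ to
\[
 \bigl(\min_K x_1,\min_K x_2,\min_K x_3,\max_K x_1,\max_K x_2,\max_K x_3\bigr)
\]
then yields a finite list of polynomial inequalities with rational coefficients that cut $\cS$ out of $\bR^2\times J\times\bR^2$.

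For the final assertion, I would exhibit an explicit $5$-parameter family of realizable convex sets $K\subseteq\Delta$ whose associated sextuples sweep out a non-empty open subset of $\bR^2\times J\times\bR^2$. The realization step combines the construction of rigid $3$-systems with prescribed accumulation data from~\cite{R2015} with Theorem~\ref{intro:thm:u,v,w}, which permits gluing several specimens into a single point $\uu$ with $\bQ$-linearly independent coordinates, so that $\cK(\uL_\uu)$ is the convex hull of the pieces. Closedness of $\cS$ then follows from Theorem~\ref{intro:thm:muT}. The main obstacle is the second step: identifying the precise list of transference inequalities that constrain the intermediate exponents $\phibot_2$ and $\phitop_2$ in terms of $(\phibot_1,\phibot_3,\phitop_1,\phitop_3)$, and matching these constraints against the constructive side so that the candidate region and the actual spectrum coincide.
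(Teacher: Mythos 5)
The high-level plan — pass to the geometry of $\cK(\uL_\uu)\subseteq\bar\Delta^{(3)}$, deduce $\cS\subseteq\bR^2\times J\times\bR^2$ from the Jarn\'{\i}k--Laurent curve, invoke Tarski--Seidenberg for semi-algebraicity, and exhibit an open family of realizable data — is the right one, and it does track the paper's strategy. But the key technical step is not merely unfinished; as stated it is false. The sets $\cK(\uL_\uu)$ do \emph{not} admit a parameterization by a bounded number of real parameters: by Proposition \ref{3sys:prop:chains}, for a self-similar non-degenerate $3$-system $\uS$ the set $\cF(\uS)$ is a closed chain with arbitrarily many vertices, so the convex hulls $\cK(\uS)$ form an infinite-dimensional family of polygons. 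What makes the Tarski--Seidenberg argument go through, and what your plan is missing, is a reduction: to compute $\mu_T(\uS)$ for $T\colon\bR^3\to\bR^m$ one may replace $\cK(\uS)$ by the convex hull of at most $m$ \emph{elementary paths}, each of which has a fixed number (six) of vertices. This is precisely the content of Theorem \ref{semi:thm}, which rests on Proposition \ref{semi:prop} (every $\cK(\uS)$ is the convex hull of finitely many strict elementary paths) and the observation that each coordinate of $\inf T(\cdot)$ is achieved on a single elementary path. After this reduction the ambient space has dimension $19m$ and Tarski--Seidenberg applies. You acknowledge ``the main obstacle is the second step,'' and indeed that obstacle is the substance of Sections \ref{sec:3sys} and \ref{sec:semi}; without it your parameterization does not exist.

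There is a second gap in your treatment of the last assertion. Proving that $\cS$ is the closure of a non-empty open subset of $\bR^2\times J\times\bR^2$ cannot be done purely abstractly from compactness and the gluing theorem; one needs enough control on the boundary to deform into the interior. The paper achieves this by pushing the reduction further: Propositions \ref{six:prop:phibot1} and \ref{six:prop:phitop3} show that for this particular sextuple of exponents \emph{two} elementary paths suffice, and the proof of Theorem \ref{intro:thm:S} then perturbs those two elementary paths in a five-parameter family (two parameters $\epsilon,\eta$ constrained to stay on $J$, plus $\delta_1,\dots,\delta_4$), exploiting that for a point $\ualpha$ coming from an integral self-similar $3$-system all the relevant inequalities are strict. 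Your plan gestures at ``an explicit $5$-parameter family of realizable convex sets'' but gives no mechanism for verifying that the associated sextuples actually form an open set, which requires knowing, for each coordinate, which vertex of which elementary path achieves the extremum — the content of Lemma \ref{six:lemma:E} and the two propositions. Finally, note that Theorem \ref{intro:thm:muT} gives compactness of $\cS$ but does not by itself show $\cS$ is the closure of its interior relative to $\bR^2\times J\times\bR^2$; you still need density of the open part, which the paper gets by combining Corollary \ref{self:cor} (density of self-similar points) with the deformation.
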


Thus the spectrum is a closed manifold of dimension $5$ with
boundary. In Section \ref{sec:six}, we give an explicit set of inequalities
describing it, some of which have been obtained independently by
Schmidt and Summerer \cite{SS2016b} (for more details see
the comments after the statement of Theorem \ref{six:thm:ineq} below).
More precisely, we simply list half of the inequalities because,
with one exception, all others are obtained from these by a
simple transformation, in agreement with a general observation
of Schmidt and Summerer (see the remark after their
Theorem 1.2 in \cite{SS2013a} and also at the end of the
introduction of \cite{SS2013b}).  That transformation
consists in reversing inequalities and permuting the
variables representing $\phibot_i$ and $\phitop_{4-i}$
for each $i=1,2,3$.  The search for a precise formulation
and a satisfactory explanation of this duality was the initial
motivation for the present research, but it remains an open
problem.

\medskip
This paper is organized as follows.
In Section~\ref{sec:spectra}, we use results of \cite{R2015}
recalled in Section~\ref{sec:systems} to transpose the notion
of spectrum in terms of a simple class of $\bR^n$-valued
functions of one variable called $n$-systems.  In the
recent papers \cite{K2016}, \cite{M2016}, \cite{R2016},
\cite{SS2013b} and \cite{SS2016a}, the authors exhibit
points of a given spectrum by forming $n$-systems whose
graphs are invariant under a non-trivial homothety
with center at the origin.
In Section~\ref{sec:spectra}, we show that the points of a spectrum
coming from such self-similar $n$-systems all belong to a
single connected component of the spectrum.
Using tools from Sections~\ref{sec:approx},
\ref{sec:deformation} and \ref{sec:refined}, we show
moreover in Section~\ref{sec:self} that these points are dense in
the spectrum.  The latter must therefore be connected.
Its compactness is proved in Section~\ref{sec:composite}.
Thus the whole spectrum is the topological closure
of its subset of points attached to self-similar $n$-systems.
The last three sections treat the case
of dimension $n=3$ proving Theorems \ref{intro:thm:u,v,w},
\ref{intro:thm:semi} and \ref{intro:thm:S}, together
with an explicit description of the spectrum of
$(\phibot_1,\dots,\phitop_3)$.  The arguments are
geometric, taking advantage of the fact that, like
the sets $\cF(\uL_\uu)$ attached to points $\uu\in\bR^3$,
the analogous sets $\cF(\uP)$ attached to $3$-systems
$\uP$ (defined in Section~\ref{sec:spectra}) are planar sets
and therefore can easily be drawn on paper.
More precisely, the proofs are based on
a geometric description of the sets $\cF(\uP)$
given in Section~\ref{sec:3sys} for self-similar
$3$-systems $\uP$ satisfying a mild non-degeneracy
condition.

%
%

\section{Non-degenerate systems, rigid systems and canvases}
\label{sec:systems}

Fix an integer $n$ with $n\ge 2$, and let
\[
 \ue_1=(1,0,\dots,0),\dots,\ue_n=(0,\dots,0,1)
\]
denote the elements of the canonical basis of $\bR^n$.
In our setting, the notion of $(n,0)$-system
introduced and studied by Schmidt and Summerer in
\cite[\S\S 2--3]{SS2013a} takes the following form.

\begin{definition}
\label{systems:def:n-system}
Let $I$ be a subinterval of $[0,\infty)$ with
non-empty interior.  An $n$-system on $I$ is a map
$\uP=(P_1,\dots,P_n)\colon I \to \bR^n$ with the
property that, for any $q\in I$:
\begin{itemize}
\item[(S1)] $0\le P_1(q)\le\cdots\le P_n(q)$ and
  $P_1(q)+\cdots+P_n(q)=q$;
\smallskip
\item[(S2)] there exist $\epsilon>0$ and integers
  $k,\ell\in\{1,\dots,n\}$ such that
  \[
   \uP(t)=\begin{cases}
         \uP(q)+(t-q)\ue_\ell
          &\text{for any $t\in I\cap[q-\epsilon,q]$,}\\
         \uP(q)+(t-q)\ue_k
          &\text{for any $t\in I\cap[q,q+\epsilon]$;}
         \end{cases}
   \]
\item[(S3)] if $q$ is in the interior of $I$ and if the
  integers $k$ and $\ell$ from (S2) satisfy $k>\ell$,
  then $P_\ell(q)=\cdots=P_k(q)$.
\end{itemize}
We say that such a map is \emph{proper} if $P_1$ is unbounded.
\end{definition}

In \cite{SS2013a}, Schmidt and Summerer showed that the
maps $\uL_\uu\colon[0,\infty) \to\bR^n$ attached to non-zero
points $\uu$ of $\bR^n$ satisfy similar but weaker conditions
and they proposed those $n$-systems as an idealized model
for the former maps. By \cite[Theorems 8.1 and 8.2]{R2015},
the $n$-systems have the following approximation property.

\begin{theorem}
\label{systems:thm:approxLbyP}
For each non-zero point $\uu$ in $\bR^n$, there exist $q_0\ge 0$
and an $n$-system $\uP$ on $[q_0,\infty)$ such that $\uP-\uL_\uu$
is bounded on $[q_0,\infty)$.  Conversely, for any $q_0\ge 0$
and any $n$-system $\uP$ on $[q_0,\infty)$, there exists a non-zero
point $\uu\in\bR^n$ such that $\uP-\uL_\uu$ is bounded on
$[q_0,\infty)$.  The point $\uu$ has $\bQ$-linearly independent
coordinates if and only if the map $\uP$ is proper.
\end{theorem}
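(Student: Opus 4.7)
The plan is to prove the two directions separately. In both cases the bridge is the combinatorial structure that $\uL_\uu$ and an $n$-system share: a continuous piecewise linear map whose slope on each piece is a standard basis vector, whose coordinates are non-decreasing, whose sum differs from $q$ by a bounded amount, and which satisfies a rigidity condition at the ``upward switches'' of slope.

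For the forward direction, I would start from the Schmidt--Summerer description of $\uL_\uu$: it is continuous, piecewise linear with left/right slope vectors in $\{\ue_1,\dots,\ue_n\}$ almost everywhere; the sum $L_{\uu,1}+\cdots+L_{\uu,n}-q$ is bounded on $[0,\infty)$; and there is a rigidity theorem asserting, modulo a bounded error, that whenever the right-slope index $k$ exceeds the left-slope index $\ell$ at some $q$, the coordinates $L_{\uu,\ell}(q),\dots,L_{\uu,k}(q)$ are approximately equal. From $\uL_\uu$ I would extract the discrete combinatorial data of its successive slope vectors---what the section title calls a \emph{canvas}---and produce an idealized piecewise linear map $\uP$ with the same sequence of slope vectors but breakpoints perturbed by bounded amounts and initial values adjusted so that $P_1(q)+\cdots+P_n(q)=q$ holds exactly. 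The rigidity at upward switches then translates into (S3) for $\uP$, and the boundedness of $\uP-\uL_\uu$ follows because the perturbations are uniformly controlled on each linear piece and telescope only by a bounded amount between consecutive pieces.

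For the converse direction, given an $n$-system $\uP$ on $[q_0,\infty)$, I would inductively construct a sequence of primitive integer vectors $\uv_1,\uv_2,\dots$ realizing the successive minima prescribed by $\uP$. At each switch time $q_j$ where the slope changes from $\ue_\ell$ to $\ue_k$, the value $P_k(q_j)$ dictates the expected Euclidean norm $e^{P_k(q_j)}$ of a new integer point; at an upward switch ($k>\ell$), the equality $P_\ell(q_j)=\cdots=P_k(q_j)$ requires $k-\ell+1$ linearly independent integer points of essentially the same norm. The unknown $\uu$ is then determined by demanding $|\uv_j\cdot\uu|\approx e^{-q_j+P_{k_j}(q_j)}$ for each $j$, which is a nested sequence of slab conditions in $\bR^n$ whose intersection, by compactness and careful dimensional accounting, collapses to a single ray $\bR\uu$. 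One then checks that the $\uv_j$ do realize (up to bounded multiplicative error) the Minkowski minima of $\cC_\uu(q)$, so that $\uL_\uu$ matches $\uP$ up to bounded error.

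The main obstacle is this converse construction: one must simultaneously control the norms of the integer vectors, arrange the correct incremental linear independence at each upward switch, and guarantee the existence of a $\uu$ compatible with the entire sequence of dot-product inequalities. For the ``proper'' equivalence, if $\uu$ has $\bQ$-dependent coordinates, then some non-zero $\ux\in\bZ^n$ satisfies $\ux\cdot\uu=0$, forcing $L_{\uu,1}$ to be bounded; conversely, if $L_{\uu,1}$ is bounded, then finitely many integer vectors of bounded norm must, by pigeonhole, produce a fixed non-zero $\ux\in\bZ^n$ with $|\ux\cdot\uu|\le e^{-q}$ for arbitrarily large $q$, hence $\ux\cdot\uu=0$. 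This matches exactly the condition that $P_1$ be unbounded in the corresponding $n$-system.
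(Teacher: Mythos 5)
The paper does not prove the two approximation assertions of this theorem: it cites them directly as Theorems 8.1 and 8.2 of Roy's \emph{Annals} paper \cite{R2015}, and the only argument the paper supplies is the one-sentence remark that the last assertion follows from the equivalence ``$\uu$ has $\bQ$-linearly independent coordinates $\iff$ $L_{\uu,1}$ is unbounded.'' Your final paragraph proves exactly this equivalence, correctly and with a bit more detail (the pigeonhole argument on finitely many integer vectors of bounded norm is the right way to make the reverse implication precise), so that part of your proposal matches the paper's own reasoning.

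For the two main implications, however, what you have written is a plausible plan of attack, not a proof, and the genuine difficulties are precisely the places your sketch waves at. In the forward direction, extracting the sequence of slope vectors of $\uL_\uu$ and declaring that an idealized $\uP$ can be built with ``breakpoints perturbed by bounded amounts'' so that conditions (S1)--(S3) hold exactly is exactly the content that needs proving: $\uL_\uu$ only satisfies a weaker, approximate version of the rigidity condition (S3), and the breakpoints may accumulate or drift in ways that make a naive perturbation unbounded, so one must argue that the perturbations do not telescope to infinity. Your sentence asserting this is an assertion, not an argument. In the converse direction, the construction of a nested sequence of primitive integer vectors realizing prescribed minima, with the required incremental linear independence at each upward switch, together with a $\uu$ compatible with all the slab conditions, is the heart of \cite[Theorem~8.2]{R2015}; saying it follows ``by compactness and careful dimensional accounting'' hides a substantial recursive lattice construction. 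You candidly flag this as ``the main obstacle,'' which is accurate, but it means your proposal does not establish the statement. Since the paper itself treats both implications as black boxes from \cite{R2015}, the appropriate move here would be to cite that source rather than reprove it; if you do want a self-contained argument, the missing content is essentially all of \S\S2--8 of \cite{R2015}.
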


The last assertion follows from the fact that $\uu$ has
$\bQ$-linearly independent coordinates if and only if the function
$L_{\uu,1}\colon[0,\infty) \to \bR$ is unbounded.

We now present several constructions related to $n$-systems.

\smallskip
\noindent
\textbf{Rescaling.} If $\uP$ is an $n$-system on some
subinterval $I$ of $[0,\infty)$, then for each $\rho>0$, the map
$\tuP\colon\rho I\to \bR^n$ given by $\tuP(q)=\rho\uP(\rho^{-1} q)$
is also an $n$-system.  We say that $\uP$ is \emph{self-similar}
if $I$ is unbounded and if there exists $\rho>1$ such that
$\uP(\rho q)=\rho\uP(q)$ for each $q\in I$ (i.e.\ $\uP=\tuP$
on $\rho I$).

\smallskip
\noindent
\textbf{Gluing.}  As the definition shows,
an $n$-system $\uP\colon I\to \bR^n$ is defined by local
conditions.  In particular, it is a continuous map which admits
a left derivative $\uP'(q^-) \in \{\ue_1,\dots,\ue_n\}$
at each point $q\in I$ with $q\neq \inf(I)$, and a right
derivative $\uP'(q^+)\in \{\ue_1,\dots,\ue_n\}$ at
each point $q\in I$ with $q\neq \sup(I)$.

Suppose that we have an $n$-system $\uP$ on $[u,v]$ and an
$n$-system $\uR$ on $[v,w]$ where $0\le u<v<w$.  Let $k$
and $\ell$ be the integers for which $\uP'(v^-)=\ue_\ell$
and $\uR'(v^+)=\ue_k$.  If $\uP(v)=\uR(v)$ and $k\le \ell$,
then the map $\uS\colon [u,w]\to\bR^n$ which coincides
with $\uP$ on $[u,v]$ and with $\uR$ on $[v,w]$ is also an
$n$-system.

The above process of gluing $n$-systems can be repeated
indefinitely.  For example, suppose that $\uP(v)=\rho\uP(u)$
for some $\rho>0$.  Then, by Condition (S1), we have $\rho=v/u>1$.  Suppose
further that the integers $k$ and $\ell$ for which
$\uP'(u^+)=\ue_k$ and $\uP'(v^-)=\ue_\ell$ satisfy $k\le \ell$,
then the map $\uS\colon[u,\infty) \to\bR^n$ given by $\uS(\rho^i q)=
\rho^i\uP(q)$ for each $q\in[u,v]$ and each $i\ge 1$ is a
self-similar $n$-system which extends $\uP$.

\smallskip
\noindent
\textbf{Combined graph.}
Let $\uP=(P_1,\dots,P_n)\colon I\to \bR^n$ be an $n$-system.
Following the terminology of Schmidt and Summerer in \cite{SS2013a},
its \emph{combined graph} is the union
of the graphs of its components $P_1,\dots,P_n$ in $I\times \bR$.
Since, by Condition (S1), the map $\uP$ takes values in the set
\[
 \Delta_n=\{(x_1,\dots,x_n)\in\bR^n\,;\, x_1\le \cdots\le x_n\},
\]
this graph determines $\uP$ except in some degenerate cases.
A \emph{division number} of $\uP$
is a point of $I$ on the boundary of $I$, or an interior point
$q$ of $I$ where $\uP$ is not differentiable (i.e.\ $\uP'(q^-)\neq
\uP'(q^+)$).  The division numbers of $\uP$ form a discrete subset
of $I$.  A \emph{switch number} of $\uP$ is a point of $I$
on the boundary of $I$, or an interior point $q$ of $I$ for which
the integers $k$ and $\ell$ in Condition (S2) satisfy $k<\ell$.
Thus the switch numbers of $\uP$ are also division numbers of $\uP$.
A \emph{division point} (resp.\ \emph{switch point}) of $\uP$
is the value of $\uP$ at a division number (resp.\ at a switch number).

Suppose that $u<v$ are points of $I$ with no switch number of
$\uP$ between them, and let $k$ and $\ell$ be the integers
for which $\uP'(u^+)=\ue_k$ and $\uP'(v^-)=\ue_\ell$.  Then,
we have
\[
 P_k(u)<P_\ell(v)
 \et
 \big( P_1(u),\dots,\widehat{P_k(u)},\dots,P_n(u)\big)
 =
 \big(P_1(v),\dots,\widehat{P_\ell(v)},\dots,P_n(v)\big),
\]
where the hat on a coordinate means that it is omitted.
Moreover, the restriction of $\uP$ to $[u,v]$ is given by
\[
 \uP(q)
 =\Phi_n\big(P_1(u),\dots,\widehat{P_k(u)},\dots,P_n(u),P_k(u)+q-u\big)
 \quad
 (u\le q\le v),
\]
where $\Phi_n\colon\bR^n\to\Delta_n$ denotes
the continuous map which lists the coordinates of a point
in monotone increasing order. This formula simply expresses
the fact that the combined graph of $\uP$ over $[u,v]$ consists
of the horizontal line segments $[u,v]\times\{P_j(u)\}$ for
$j=1,\dots,\widehat{k},\dots,n$ and the line segment
of slope $1$ joining the points $(u,P_k(u))$ and $(v,P_\ell(v))$.
The $n$-system $\uP$ can be viewed as the result of gluing together
such simpler systems, for example the restrictions of $\uP$
to the closed subintervals of $I$ joining consecutive
switch numbers of $\uP$.

\smallskip
\noindent
\textbf{Canvases and non-degenerate systems.}
A \emph{non-degenerate $n$-system}
is an $n$-system $\uP$ defined on a closed subinterval of
$(0,\infty)$ whose switch points
all have $n$ distinct positive coordinates.
We will need the following related notions.

A \emph{finite pre-canvas} in
$\bR^n$ is a finite sequence of points
$(\ua^{(i)})_{1\le i \le s}$ in $\Delta_n$
of cardinality $s\ge 2$ with the property that:
\begin{itemize}
\item[(C1)] for each $i=1,\dots,s$, the coordinates
 $(a^{(i)}_1,\dots,a^{(i)}_n)$
 of $\ua^{(i)}$ form a strictly increasing sequence of
 positive real numbers;
\smallskip
\item[(C2)] for each $i=1,\dots,s-1$, there exist
integers $k_i$ and $\ell_{i+1}$ with
$1\le k_i\le \ell_{i+1}\le n$ such that
\[
 a^{(i)}_{k_i} < a^{(i+1)}_{\ell_{i+1}}
 \et
 (a^{(i)}_1,\dots,\widehat{a^{(i)}_{k_i}},\dots,a^{(i)}_n)
 =
 (a^{(i+1)}_1,\dots,\widehat{a^{(i+1)}_{\ell_{i+1}}},\dots,a^{(i+1)}_n);
\]
\item[(C3)] for each index $i$ with $1< i<s$, we have $k_i\le\ell_i$.
\end{itemize}
The pairs $(k_i,\ell_{i+1})$ with $1\le i< s$ are uniquely
determined by Condition (C2).  We call them the
\emph{transition indices} of the pre-canvas.

To a pre-canvas as above, we associate the function
$\uP\colon[q_1,q_s]\to\Delta_n$ given by
\begin{equation}
\label{eq:P}
 \uP(q)
 =\Phi_n\big(a^{(i)}_1,\dots,\widehat{a^{(i)}_{k_i}},\dots,a^{(i)}_n,
         a^{(i)}_{k_i}+q-q_i\big)
 \quad
 (1\le i< s, \ q_i\le q\le q_{i+1}),
\end{equation}
where
\begin{equation}
\label{eq:qi}
 q_i=a_1^{(i)}+\cdots+a_n^{(i)}
 \quad
 (1\le i\le s).
\end{equation}
This map is an $n$-system which satisfies $\uP(q_i)=\ua^{(i)}$
for $i=1,\dots, s$, as well as $\uP'(q_i^+)=\ue_{k_i}$
and $\uP'(q_{i+1}^-)=\ue_{\ell_{i+1}}$ for $i=1,\dots,s-1$.

A \emph{finite canvas} in
$\bR^n$ is a pre-canvas $(\ua^{(i)})_{1\le i \le s}$
which, instead of Condition (C3) satisfies the
stronger condition $k_i<\ell_i$ whenever $1<i<s$.  Then,
the numbers $q_1,\dots,q_s$ given by \eqref{eq:qi} are
the switch numbers of the associated $n$-system
$\uP\colon[q_1,q_s]\to\Delta_n$.  Since $\uP(q_i)=\ua^{(i)}$
for each $i=1,\dots,s$, that system is
non-degenerate.  We obtain in this way all
non-degenerate $n$-systems whose domain is a
compact subinterval of $(0,\infty)$.

If $(\ua^{(i)})_{1\le i \le s}$
is a finite pre-canvas, then the subsequence
obtained by deleting each point $\ua^{(i)}$ with $1<i<s$ and
$k_i=\ell_i$ is a canvas with the same associated $n$-system.
Thus, the non-degenerate $n$-systems with compact domain
are also the $n$-systems associated to the finite pre-canvases.

An \emph{infinite pre-canvas} is an infinite unbounded
sequence $(\ua^{(i)})_{1\le i}$ such that
$(\ua^{(i)})_{1\le i\le s}$ is a pre-canvas
for each integer $s\ge 2$.
Then, the corresponding
$n$-systems given by \eqref{eq:P} are the restrictions to
$[q_1,q_s]$ of a unique non-degenerate $n$-system
$\uP\colon[q_1,\infty) \to\Delta_n$.
We obtain in this way all non-degenerate $n$-systems
with unbounded domain.

The notion of an infinite canvas is similar.  The
non-degenerate $n$-systems associated to such canvases
are those with unbounded domain and infinitely many
switch numbers.  This includes all proper non-degenerate
$n$-systems.

\smallskip
\noindent
\textbf{Rigid systems.}
Let $\delta>0$. A \emph{rigid $n$-system of mesh $\delta$}
is a non-degenerate $n$-system $\uP$ whose switch points
belong to $\delta\bZ^n$. By Condition (S1), the switch
numbers of $\uP$ are then positive multiples of $\delta$.
By the above, such a system comes from a pre-canvas,
possibly infinite, contained in $\delta\bZ^n$.
By \cite[Theorems 8.1 and 8.2]{R2015}, they
have the following approximation property.

\begin{theorem}
\label{systems:thm:approxPbyRigid}
For any $\delta>0$ and any $n$-system $\uP$ with unbounded domain $I$,
there exists a rigid $n$-system $\uR\colon[q_0,\infty) \to\Delta_n$
of mesh $\delta$ with $q_0\in I$
such that $\uP-\uR$ is bounded on $[q_0,\infty)$.
\end{theorem}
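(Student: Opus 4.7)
The plan is to approximate $\uP$ by rigidifying its associated pre-canvas data. I would first reduce to the main case where $\uP$ has unbounded domain \emph{and} infinitely many switch numbers, and is non-degenerate. If $\uP$ eventually has no more switch numbers, then past the last switch one coordinate grows with slope $1$ while the others stay constant, and such a tail is straightforward to approximate by a rigid system. A degenerate $\uP$ can be preprocessed: collapsing a degenerate switch point to a pre-canvas point in the sense of (C3) changes $\uP$ by $0$, and by a small bounded perturbation of the pre-canvas one can assume all switch points have strictly increasing coordinates. So assume $\uP$ arises from an infinite pre-canvas $(\ua^{(i)})_{i\ge 1}$ in $\Delta_n$ with transition indices $(k_i,\ell_{i+1})$ via \eqref{eq:P}.

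The key idea is to follow the \emph{strands} of $\uP$. Condition (C2) expresses that passing from $\ua^{(i)}$ to $\ua^{(i+1)}$ removes the coordinate value $a^{(i)}_{k_i}$ and inserts the new value $a^{(i+1)}_{\ell_{i+1}}$, while all other coordinate values persist. Thus the multiset of coordinate values decomposes into strands, each being a constant value $v$ that appears in a maximal block of consecutive switch points (possibly at shifting positions in the sorted order). For each strand I would replace $v$ by the nearest multiple $\tilde v\in \delta\bZ$, and then define $\tilde\ua^{(i)}\in\delta\bZ^n$ by substituting rounded values for the strand values in $\ua^{(i)}$. This moves each coordinate by at most $\delta/2$. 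Next I would repair collisions: if after rounding two neighbouring coordinates of some $\tilde\ua^{(i)}$ coincide (which can only happen when the original strand values were within $\delta$ of each other), push the larger strand values up in steps of $\delta$ to restore the strict inequality required by (C1). Since at any index only $n$ strands are active, the cumulative shift is bounded by $n\delta$, and because the adjustment is applied uniformly along each strand it is consistent at every switch point where that strand appears. The transition indices $(k_i,\ell_{i+1})$ are unchanged by this procedure, so (C2) continues to hold and $(\tilde\ua^{(i)})_{i\ge 1}$ is an infinite pre-canvas in $\delta\bZ^n$.

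Let $\uR$ be the rigid $n$-system of mesh $\delta$ associated to $(\tilde\ua^{(i)})_{i\ge 1}$ and let $\tilde q_i = \tilde a_1^{(i)}+\cdots+\tilde a_n^{(i)}$ be its switch numbers. By construction $\|\tilde\ua^{(i)}-\ua^{(i)}\|_\infty$ is uniformly bounded, hence so is $|\tilde q_i-q_i|$. On each slab between consecutive switch numbers both $\uP$ and $\uR$ change only in one coordinate with slope $1$ while the others are locally constant, so a direct comparison, combining the uniform bound on switch points with the uniform bound on switch numbers, gives $\|\uP(q)-\uR(q)\|_\infty$ bounded on $[q_0,\infty)$ for $q_0=q_1$ (or the corresponding starting $\tilde q_1$, whichever is larger, both of which lie in $I$ after a shift).

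The main obstacle is the collision-repair step: one must show that forcing neighbouring strands apart on the lattice $\delta\bZ$ does not cascade into unbounded shifts. The crucial point is that this adjustment is carried out once per strand and only $n$ strands are ever simultaneously active, so the global displacement is $O(n\delta)$; consistency along each strand is automatic since each strand receives a single rounded value. A subsidiary technical point is to make sure that when the original pre-canvas is first ``rounded and spread,'' transitions of the form $k_i=\ell_i$ (allowed in a pre-canvas but not in a canvas) survive properly. This is handled by retaining the pre-canvas viewpoint throughout and only at the end, if desired, deleting the redundant points as explained in the canvases discussion above to obtain the genuine rigid $n$-system $\uR$.
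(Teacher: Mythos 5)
The paper does not prove this theorem itself; it cites Theorems 8.1 and 8.2 of \cite{R2015}. Your proposal is therefore necessarily an independent argument, and it has a genuine gap in the collision-repair step.

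You claim that since only $n$ strands are simultaneously active, the cumulative upward shift needed to restore strict inequalities after rounding is $O(n\delta)$. This is false: the pushes propagate along arbitrarily long chains of \emph{pairwise}-coexisting strands, not just within a single switch point. Concretely, take $n=3$, $\delta=1$, strand values $v_j=100+j/10$ for $j\ge 1$, and $\ua^{(j)}=(v_j,v_{j+1},v_{j+2})$; the transition from $\ua^{(j)}$ to $\ua^{(j+1)}$ removes $v_j$ (so $k_j=1$) and inserts $v_{j+3}$ at the top (so $\ell_{j+1}=3$), hence $k_j<\ell_j$ for all $j\ge 2$ and this is a valid infinite canvas. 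The switch numbers $q_j=300+(3j+3)/10$ are strictly increasing and unbounded, so the corresponding $3$-system $\uP$ has unbounded domain. Strands $S_j,S_{j'}$ coexist exactly when $|j-j'|\le 2$, so any assignment $\tilde v_j\in\delta\bZ$ that preserves the order of coexisting strands must be strictly increasing in $j$, forcing $\tilde v_j\ge \tilde v_1+(j-1)\delta$; since $v_j-v_1=(j-1)/10$, the shift $\tilde v_j-v_j\ge(j-1)(1-1/10)+O(1)\to\infty$. No assignment of integer strand values, however cleverly chosen, stays within bounded distance of the originals here, so the strand-rounding scheme cannot produce a rigid system $\uR$ with $\uP-\uR$ bounded.

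The conceptual obstruction is that a mesh-$\delta$ rigid system has switch points whose coordinates are distinct multiples of $\delta$, hence spread over a range of width at least $(n-1)\delta$, while $\uP$ can have switch points whose coordinates are packed into an interval of width $\ll\delta$ and switch numbers only $O(\epsilon)$ apart. Any correct proof must let $\uR$ coarsen the combinatorial structure of $\uP$ --- skipping or merging many of its switch points --- rather than rounding the pre-canvas one-for-one while keeping the same transition indices. You would also need to make the preliminary reduction to the non-degenerate case rigorous (the proposed ``small bounded perturbation'' of the pre-canvas has to be shown to preserve (C1)--(C3) and keep the perturbed system at bounded distance from the original), but that is secondary to the cascade problem above.
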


\begin{example}
\label{systems:example}
For simplicity, we say that a canvas or
a pre-canvas is \emph{integral} if it is contained in $\bZ^n$.
We also say that an $n$-system is \emph{integral} if it is rigid
of mesh $1$.  The following sequence is an example of a finite
integral canvas of cardinality $9$ in $\bR^3$:
\[
 (\underline{1},2,4),\, (2,\underline{4},\overline{5}),\,
 (2,\underline{5},\overline{6}),\, (2,\underline{6},\overline{8}),\,
 (2,\underline{8},\overline{17}),\, (\underline{2},\overline{10},17),\,
 (\underline{10},\overline{13},17),\, (\underline{13},\overline{14},17),\,
 (14,17,\overline{18}).
\]
In each triple except the last, we have underlined the coordinate
which is not a coordinate of the next triple and, in the latter
we have overlined the new coordinate. In the notation of Condition (C2),
the underlined coordinate of the $i$-th triple thus has index
$k_i$ (if $i<9$) and its overlined coordinate has index
$\ell_i$ (if $i>1$).
Figure \ref{fig1} below shows the combined graph of
the corresponding $3$-system $\uP\colon[7,49]\to\Delta_3$,
with its switch numbers marked on the horizontal $q$-axis.
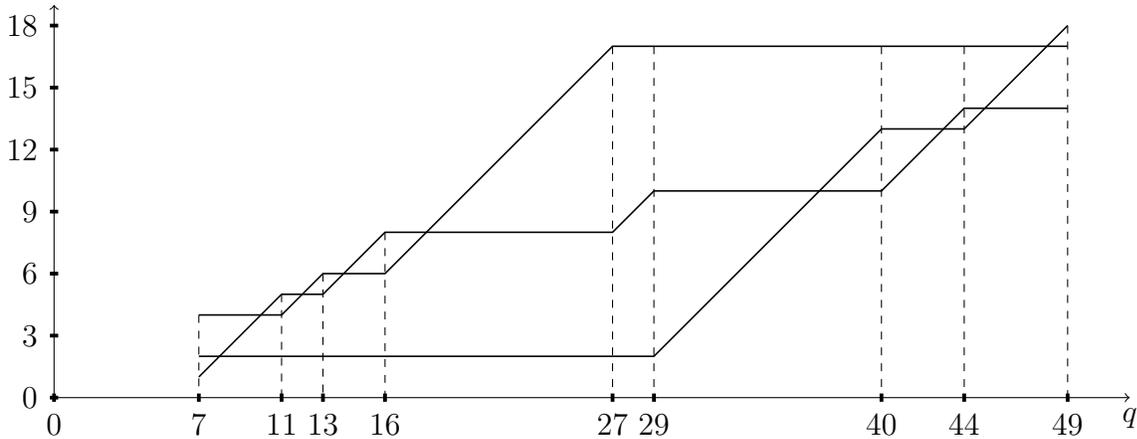
\begin{figure}[h]
   \begin{tikzpicture}[scale=0.275]
       \draw[semithick] (7,2)--(29,2)--(40,13)--(44,13)--(49,18);
       \draw[semithick] (7,1)--(11,5)--(13,5)--(16,8)--(27,8)--(29,10)--(40,10)--(44,14)--(49,14);
       \draw[semithick] (7,4)--(11,4)--(13,6)--(16,6)--(27,17)--(49,17);
       \draw[->] (-0.2,0)--(52,0) node[below]{$q$};
       \draw[->] (0,-0.2)--(0,19);
       \foreach \q in {0,7,11,13,16,27,29,40,44,49}
         \draw[line width=0.05cm] (\q,0.2)--(\q,-0.2) node[below]{$\q$};
       \foreach \q in {0,3,6,9,12,15,18}
         \draw[line width=0.05cm] (0.2,\q)--(-0.2,\q) node[left]{$\q$};
       \draw[dashed] (7,4)--(7,0);
       \draw[dashed] (11,5)--(11,0);
       \draw[dashed] (13,6)--(13,0);
       \draw[dashed] (16,8)--(16,0);
       \foreach \q in {27,29,40,44}
         \draw[dashed] (\q,17)--(\q,0);
       \draw[dashed] (49,18)--(49,0);
       \end{tikzpicture}
\caption{The combined graph of an integral $3$-system.}
\label{fig1}
\end{figure}
\end{example}

%
%

\section{Spectra of non-degenerate systems}
\label{sec:spectra}

Let $T=(T_1,\dots,T_m)\colon\bR^n\to\bR^m$ be a linear map.
For each $n$-system $\uP$ with unbounded domain,
we define $\mu_T(\uP)$ and $\cF(\uP)$ as in Definitions
\ref{intro:def:image_muT} and \ref{intro:def:F(Lu)}, upon
replacing everywhere the map $\uL_\uu$ by $\uP$.  We also
denote by $\cK(\uP)$ the convex hull of $\cF(\uP)$.
Then, the same reasoning as in Section \ref{sec:intro}
shows that
\[
 \mu_T(\uP)
   =\inf T(\cF(\uP))
   =\inf T(\cK(\uP)).
\]
We also note that, if $\uu$ is a non-zero point
of $\bR^n$ and if $\uP-\uL_\uu$ is bounded, then
we have $\cF(\uP)=\cF(\uL_\uu)$, thus
$\cK(\uP)=\cK(\uL_\uu)$ and $\mu_T(\uP)=\mu_T(\uL_\uu)$.
In view of the approximation properties stated in
Theorems \ref{systems:thm:approxLbyP}
and \ref{systems:thm:approxPbyRigid},
this yields the following alternative descriptions
of the spectrum of $\mu_T$.

\begin{theorem}
\label{spectra:thm:spectrumP}
Let $T\colon\bR^n\to\bR^m$ be any linear map, as above.
Then $\image(\mu_T)$ is the set of all points $\mu_T(\uP)$
where $\uP$ runs through the $n$-systems with unbounded
domain, while $\image^*(\mu_T)$ consists of all $\mu_T(\uP)$
where $\uP$ runs through the proper $n$-systems.  For any
$\delta>0$, this statement remains true if we restrict to
rigid $n$-systems of mesh $\delta$ of the same types.
In particular, it remains true if we restrict to
non-degenerate $n$-systems of the same types.
\end{theorem}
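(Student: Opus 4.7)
The plan is to exploit the observation, already recorded in the paragraph just before the theorem, that $\mu_T$, $\cF$ and $\cK$ are invariant under bounded perturbations of the underlying map. Concretely, if $\uF,\uG\colon[q_0,\infty)\to\bR^n$ differ by a bounded function, then $q^{-1}(\uF(q)-\uG(q))\to 0$ as $q\to\infty$, so the two maps have the same set of accumulation points for $q^{-1}\uF(q_i)$ along unbounded increasing sequences $q_i$; this gives $\cF(\uF)=\cF(\uG)$ and hence $\mu_T(\uF)=\mu_T(\uG)$ for every linear $T$. Propriety is also preserved: boundedness of $F_1-G_1$ forces $F_1$ and $G_1$ to be simultaneously bounded or simultaneously unbounded.

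With this in hand, the first assertion (involving general $n$-systems with unbounded domain) follows from Theorem \ref{systems:thm:approxLbyP} in both directions. Given a non-zero $\uu\in\bR^n$, that theorem furnishes an $n$-system $\uP$ on some $[q_0,\infty)$ with $\uP-\uL_\uu$ bounded, and then $\mu_T(\uL_\uu)=\mu_T(\uP)$. Conversely, any $n$-system $\uP$ on an unbounded interval is, by the converse part of the same theorem, within bounded distance of some $\uL_\uu$, and this $\uu$ has $\bQ$-linearly independent coordinates if and only if $\uP$ is proper. Thus the $\mu_T$-images of the two classes coincide, and the image of their proper sub-classes coincide.

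For the rigid version, I would compose the two approximation theorems. Starting from $\uu$, I first produce $\uP$ as above, then invoke Theorem \ref{systems:thm:approxPbyRigid} to obtain a rigid $n$-system $\uR$ of mesh $\delta$ on some $[q_1,\infty)$ with $\uR-\uP$ bounded. By the triangle inequality $\uR-\uL_\uu$ is bounded on $[q_1,\infty)$, and propriety transfers, so $\mu_T(\uR)=\mu_T(\uL_\uu)$ and $\uR$ is proper precisely when $\uu$ has $\bQ$-linearly independent coordinates. The reverse direction is free: a rigid $n$-system of mesh $\delta$ is \emph{a fortiori} an $n$-system with unbounded domain, hence already covered by the preceding paragraph. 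Finally, because every rigid $n$-system is non-degenerate by definition, the chain of inclusions
\[
 \{\text{rigid of mesh }\delta\}\ \subset\ \{\text{non-degenerate}\}\ \subset\ \{\text{all }n\text{-systems with unbounded domain}\}
\]
(respectively their proper subclasses) is sandwiched between two sets with the same $\mu_T$-image, forcing equality throughout and yielding the non-degenerate version as an immediate corollary.

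There is no genuine obstacle in this argument; the whole statement is essentially a formal consequence of the two cited approximation theorems once one isolates the fact that $\mu_T$ factors through equivalence classes of functions modulo bounded perturbation. The only point requiring a moment's care is the observation that propriety is also an equivalence-class invariant, which is why the starred and unstarred assertions go through in parallel.
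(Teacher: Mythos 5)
Your argument is correct and follows essentially the same route as the paper: the paper's proof of this theorem is precisely the observation, recorded in the paragraph preceding the statement, that $\mu_T$ is invariant under bounded perturbations of the underlying map, combined with the two approximation theorems \ref{systems:thm:approxLbyP} and \ref{systems:thm:approxPbyRigid} and the fact that propriety (equivalently, $\bQ$-linear independence of the coordinates of $\uu$) is preserved under such perturbations. You have spelled out the same chain of reductions in slightly more detail, including the sandwich argument between rigid, non-degenerate and general $n$-systems, which is the intended reading.
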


Thus all statements of the introduction naturally
translate into statements about $n$-systems, and we will
prove them in this form.  For that purpose, the following
fact will be useful.

\begin{proposition}
\label{spectra:prop:cvxE}
Let $\uP\colon[w_0,\infty) \to\Delta_n$ be a proper
$n$-system, let $w_0<w_1<w_2<\dots$ be its division numbers,
and let $E$ be the set of limit points of the sequence
$\big(w_i^{-1}\uP(w_i)\big)_{i\ge 1}$.  Then, $\cK(\uP)$
is the convex hull of $E$.
\end{proposition}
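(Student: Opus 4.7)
The plan is to prove the two inclusions $\operatorname{conv}(E)\subseteq\cK(\uP)$ and $\cK(\uP)\subseteq\operatorname{conv}(E)$ separately. The first follows immediately from the fact that $E\subseteq\cF(\uP)$: since the division numbers of $\uP$ form a discrete subset of $[w_0,\infty)$, the given infinite strictly increasing sequence $w_0<w_1<\cdots$ satisfies $w_i\to\infty$, so every accumulation point of $\bigl(w_i^{-1}\uP(w_i)\bigr)_{i\ge 1}$ arises along some subsequence $w_{i_j}\to\infty$, and is therefore by definition an element of $\cF(\uP)$. Taking convex hulls yields $\operatorname{conv}(E)\subseteq\cK(\uP)$.

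For the reverse inclusion, since $\operatorname{conv}(E)$ is already convex, it suffices to show $\cF(\uP)\subseteq\operatorname{conv}(E)$. Given $\ux=\lim_{j\to\infty} q_j^{-1}\uP(q_j)$ with $q_j\to\infty$, I would sandwich each $q_j$ between consecutive division numbers $w_{i_j}\le q_j\le w_{i_j+1}$. Because no division number lies strictly between $w_{i_j}$ and $w_{i_j+1}$, the map $\uP$ is affine on that interval, so writing $q_j=(1-t_j)w_{i_j}+t_j w_{i_j+1}$ with $t_j\in[0,1]$ gives $\uP(q_j)=(1-t_j)\uP(w_{i_j})+t_j\uP(w_{i_j+1})$. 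Dividing through by $q_j$ reorganises this as
\[
\frac{\uP(q_j)}{q_j}=s_j\,\frac{\uP(w_{i_j})}{w_{i_j}}+(1-s_j)\,\frac{\uP(w_{i_j+1})}{w_{i_j+1}},
\]
where $s_j=(1-t_j)w_{i_j}/q_j\in[0,1]$ and $1-s_j=t_jw_{i_j+1}/q_j$. This exhibits the normalized value $q_j^{-1}\uP(q_j)$ as a convex combination of two terms of the sequence defining $E$.

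The remaining step is a compactness argument. By (S1), each $w_i^{-1}\uP(w_i)$ has coordinates in $[0,1]$, so both sequences appearing in the combination above are bounded. Since $q_j\to\infty$ forces $i_j\to\infty$, after extracting a subsequence we may assume $s_j\to s\in[0,1]$, $w_{i_j}^{-1}\uP(w_{i_j})\to\ua$, and $w_{i_j+1}^{-1}\uP(w_{i_j+1})\to\ub$, with both $\ua$ and $\ub$ belonging to $E$. Passing to the limit yields $\ux=s\ua+(1-s)\ub\in\operatorname{conv}(E)$, which completes the proof. I do not expect any substantial obstacle: the heart of the argument is the algebraic rewriting converting the affine interpolation of $\uP$ on $[w_{i_j},w_{i_j+1}]$ into a convex combination of \emph{normalized} values, and everything else is bookkeeping.
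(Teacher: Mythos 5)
Your proof is correct and follows essentially the same route as the paper: both sandwich an arbitrary sample point between consecutive division numbers, use the affine structure of $\uP$ on that interval to rewrite the normalized value as a convex combination of two normalized values at division numbers, and conclude by a compactness/subsequence extraction argument. The only cosmetic difference is that the paper packages the three converging quantities into a single sequence in $[0,1]^n\times[0,1]^n\times[0,1]$ rather than extracting subsequences one at a time.
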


\begin{proof}
Since $E$ is contained in $\cF(\uP)$, its convex hull
is contained in $\cK(\uP)$.  To show the converse,
fix an arbitrary point $\ux$ of $\cF(\uP)$.  It remains
to show that $\ux$ belongs to the convex hull of $E$.
By definition, that point is given by
$\ux=\lim_{i\to\infty} t_i^{-1}\uP(t_i)$ for some
unbounded sequence $(t_i)_{i\ge 1}$ in $(w_0,\infty)$.
For each $i\ge 1$, let $u_i<v_i$ be the consecutive
division numbers of $\uP$ for which $t_i\in[u_i,v_i]$,
and put $\lambda_i=u_i(v_i-t_i)/(t_i(v_i-u_i))\in[0,1]$.
Then, we find that
\[
 t_i^{-1}\uP(t_i)
   = t_i^{-1}
     \Big( \uP(u_i) + \frac{t_i-u_i}{v_i-u_i}\big(\uP(v_i)-\uP(u_i)\big) \Big)
   = \lambda_i u_i^{-1}\uP(u_i) + (1-\lambda_i) v_i^{-1}\uP(v_i),
\]
because $\uP$ is affine on $[u_i,v_i]$.
As the triples $(u_i^{-1}\uP(u_i), v_i^{-1}\uP(v_i), \lambda_i)$
with $i\ge 1$ form a sequence in $[0,1]^n\times[0,1]^n\times[0,1]$,
and as the latter product is compact, this sequence contains
a converging subsequence whose limit is a point $(\uy,\uz,\lambda)$
in $E\times E\times [0,1]$.  By continuity, we have
$\ux=\lambda \uy + (1-\lambda) \uz$ as required.
\end{proof}

In the case where $\uP$ is self-similar, the sequence
$\big(w_i^{-1}\uP(w_i)\big)_{i\ge 1}$ is periodic and the conclusion
simplifies as follows.

\begin{corollary}
\label{spectra:cor:prop:cvxE}
Suppose moreover that $\uP$ is self-similar.
Let $\rho>1$ be such that $\uP(\rho q)=\rho\uP(q)$ for each $q\ge q_0$,
and let $s\ge 1$ be the index for which $\rho w_1=w_{s+1}$.
Then $\cK(\uP)$ is the convex hull of
$\{w_1^{-1}\uP(w_1),\dots,w_s^{-1}\uP(w_s)\}$.
\end{corollary}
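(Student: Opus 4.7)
The plan is to show that self-similarity forces the sequence $\big(w_i^{-1}\uP(w_i)\big)_{i\ge 1}$ to be periodic with period $s$, so that its set of limit points $E$ is simply the (finite) set of values it takes. Once that is done, Proposition \ref{spectra:prop:cvxE} immediately gives the conclusion.

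The first step is to understand how the scaling $q\mapsto \rho q$ acts on the set of division numbers of $\uP$. By self-similarity, for any $q\ge w_0$ and small $\epsilon>0$ we have $\uP(\rho(q\pm\epsilon))=\rho\uP(q\pm\epsilon)$; differentiating in $\epsilon$ shows that $\uP'(\rho q^\pm)=\uP'(q^\pm)$. Consequently $q$ is an interior point of non-differentiability if and only if $\rho q$ is. Combined with the fact that the left endpoint $w_0$ is also a division number, this means the scaling $q\mapsto\rho q$ carries the set of division numbers in $[w_0,\infty)$ bijectively onto the set of division numbers in $[\rho w_0,\infty)$.

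Using the hypothesis $\rho w_1=w_{s+1}$, I would deduce by induction on $i\ge 1$ that $w_{i+s}=\rho w_i$. Indeed, since $w_i\ge w_1\ge w_0$, the scaling sends $w_1<w_2<\cdots$ injectively to a strictly increasing sequence of division numbers starting at $\rho w_1=w_{s+1}$, and since scaling exhausts \emph{all} division numbers in $[w_{s+1},\infty)$, that image must be precisely $w_{s+1}<w_{s+2}<\cdots$. Hence $\rho w_i=w_{s+i}$ for every $i\ge 1$. It follows that
\[
 w_{i+s}^{-1}\uP(w_{i+s})=(\rho w_i)^{-1}\uP(\rho w_i)=(\rho w_i)^{-1}\rho\uP(w_i)=w_i^{-1}\uP(w_i),
\]
so the sequence $\big(w_i^{-1}\uP(w_i)\big)_{i\ge 1}$ is periodic of period $s$.

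A periodic sequence taking only finitely many values has for set of accumulation points precisely the set of values it takes; thus $E=\{w_1^{-1}\uP(w_1),\dots,w_s^{-1}\uP(w_s)\}$, and Proposition \ref{spectra:prop:cvxE} gives the result. The only non-routine step is the identification $w_{i+s}=\rho w_i$: the subtle point is that self-similarity is assumed only on $[w_0,\infty)$, so one must check that scaling both sends division numbers to division numbers and is surjective onto division numbers in $[\rho w_0,\infty)$, in order to rule out the appearance of extra division numbers in an interval $(\rho w_i,\rho w_{i+1})$.
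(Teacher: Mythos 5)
Your proof is correct and fleshes out exactly the argument the paper takes for granted: the paper offers no explicit proof, merely the remark that self-similarity makes the sequence $\big(w_i^{-1}\uP(w_i)\big)_{i\ge 1}$ periodic, whence the conclusion follows from Proposition~\ref{spectra:prop:cvxE}. Your verification of the periodicity via the identity $\uP'(\rho q^{\pm})=\uP'(q^{\pm})$ for interior $q$, and the surjectivity of the scaling onto division numbers in $[w_{s+1},\infty)$, supplies precisely the missing details; the only imprecision is the intermediate claim that scaling bijects division numbers in $[w_0,\infty)$ onto those in $[\rho w_0,\infty)$ (the point $\rho w_0$ need not be a division number when $w_0>0$), but this does not affect the argument since your subsequent step works only with $w_1,w_2,\dots$ and the division numbers $\ge w_{s+1}=\rho w_1$, for which the reasoning is sound.
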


The rest of this section is devoted to the following construction.

\begin{proposition}
\label{spectra:prop:P^nu}
Let $\nu\colon [0,\infty) \to [0,\infty)$ be
a strictly increasing continuous function, and let $\uP$ be
an $n$-system on a subinterval $I$ of $[0,\infty)$.
Define $\ua^\nu=(\nu(a_1),\dots,\nu(a_n))$ for
each $\ua=(a_1,\dots,a_n)\in[0,\infty)^n$.  Then, there is
a unique $n$-system, denoted $\uP^\nu$, whose image is
$\{\uP(q)^\nu\,;\,q\in I\}$.  Its switch points (resp.\ its division
points) are the points $\ua^\nu$ where $\ua$ is a switch point
(resp.\ a division point) of $\uP$.
\end{proposition}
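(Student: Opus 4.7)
The plan is to construct $\uP^\nu$ by applying $\nu$ coordinate-wise to the image of $\uP$ and then reparametrizing the domain so that Condition (S1) is preserved. Concretely, I would define $\tilde\nu\colon I\to[0,\infty)$ by
\[
 \tilde\nu(q)=\sum_{i=1}^n \nu(P_i(q))
\]
and require $\uP^\nu(\tilde\nu(q))=\uP(q)^\nu$ for every $q\in I$. The first step is to check that $\tilde\nu$ is a continuous strictly increasing bijection from $I$ onto some subinterval $\tilde I$ of $[0,\infty)$, so that $\uP^\nu$ is well-defined on $\tilde I$. Continuity is clear. For strict monotonicity, on any interval $[u,v]\subset I$ between consecutive division numbers of $\uP$ on which $\uP'=\ue_k$, the coordinate $P_k$ grows affinely with slope $1$ while the other coordinates remain constant, so $\tilde\nu(q)=\nu(P_k(u)+q-u)+\sum_{i\ne k}\nu(P_i(u))$ is strictly increasing in $q$ because $\nu$ is.

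Next, I would verify Conditions (S1)--(S3) for $\uP^\nu$. Condition (S1) is immediate: the coordinates of $\uP^\nu(\tilde\nu(q))$ are nondecreasing because $\nu$ is strictly increasing, they are nonnegative because $\nu$ takes values in $[0,\infty)$, and by construction their sum is $\tilde\nu(q)$. The crux is Condition (S2). On $[u,v]$ as above, the identity
\[
 \tilde\nu(q)-\tilde\nu(u)=\nu(P_k(q))-\nu(P_k(u))
\]
rewrites the $k$-th coordinate of $\uP^\nu$ as an affine function of $\tilde q$ with slope $1$, while the other coordinates remain constant; thus $\uP^\nu$ satisfies (S2) with the same integers $k$ and $\ell$ as $\uP$. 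Condition (S3) then follows at once, because $\nu$ preserves equality among coordinates.

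For uniqueness, I would observe that any $n$-system $\uQ$ whose image equals $\{\uP(q)^\nu : q\in I\}$ is injective by (S1) (since its parameter value is the sum of its coordinates), and the parameter at any image point $\ua$ is forced to be $a_1+\cdots+a_n$. Hence $\uQ(\tilde\nu(q))=\uP(q)^\nu=\uP^\nu(\tilde\nu(q))$ on a common domain, so $\uQ=\uP^\nu$. The description of division and switch points is then bookkeeping: the left and right slope indices of $\uP^\nu$ at $\tilde\nu(q)$ coincide with those of $\uP$ at $q$, so $q\mapsto\tilde\nu(q)$ bijects division (resp.\ switch) numbers of $\uP$ with those of $\uP^\nu$, the corresponding division (resp.\ switch) points transforming by $\ua\mapsto\ua^\nu$. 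The only substantive point in the whole argument is the displayed identity that turns coordinate-wise composition with $\nu$ into unit-slope growth after reparametrization; everything else is formal.
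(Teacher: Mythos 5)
Your proof is correct and follows essentially the same route as the paper: both define the reparametrization $\sigma(q)=\tilde\nu(q)=\sum_i\nu(P_i(q))$, observe it is a strictly increasing continuous bijection onto a subinterval $J$, set $\uP^\nu(\sigma(q))=\uP(q)^\nu$, and then check (S1)--(S3) with the same $k,\ell$ carrying over. The only cosmetic difference is that you isolate the identity $\tilde\nu(q)-\tilde\nu(u)=\nu(P_k(q))-\nu(P_k(u))$ explicitly (the paper derives the slope-1 property indirectly from (S1)) and you spell out the uniqueness argument a little more fully; in substance the arguments coincide.
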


\begin{proof}
Write $\uP=(P_1,\dots,P_n)$, and consider the function
$\sigma\colon I\to [0,\infty)$ given by
\[
 \sigma(q)=\nu(P_1(q))+\cdots+\nu(P_n(q))
 \quad
 (q\in I).
\]
This is a continuous and strictly increasing map.  So, it restricts
to a homeomorphism $\sigma\colon I\to J$ where $J$, the image
of $\sigma$, is a subinterval of $[0,\infty)$.  If there exists an
$n$-system $\uP^\nu$ as in the statement of the proposition,
then its domain is $J$ and it satisfies
\[
 \uP^\nu(\sigma(q))=\uP(q)^\nu
 \quad
 (q\in I).
\]
We now show that the function $\uP^\nu\colon J\to\bR^n$
defined by the above formula is an $n$-system.  Fix $q\in I$.
Condition (S1) clearly holds for $\uP^\nu$ at the point
$\sigma(q)\in J$.  Since $\uP$ is an $n$-system,
there are integers $k,\ell\in\{1,\dots,n\}$ such that, in a
neighborhood $\cU$ of $q$ in $I$, the component $P_j$ of $\uP$
is constant to the left of $q$ if $j\neq \ell$, and constant
to the right of $q$ if $j\neq k$.  Then $\sigma(\cU)$ is a
neighborhood $\sigma(q)$ in $J$ such that the $j$-th
component $P^\nu_j$ of $\uP^\nu$ is constant to the left
of $\sigma(q)$ if $j\neq \ell$, and constant to its right
if $j\neq k$.  Then, because of Condition (S1), the remaining component
$P^\nu_\ell$ (resp.\ $P^\nu_k$) is affine of slope $1$
to the left (resp.\ right) of $\sigma(q)$ in $\sigma(\cU)$.
Moreover, if $\ell<k$, we have $P_\ell(q)=\cdots=P_k(q)$
and so $P^\nu_\ell(\sigma(q))=\cdots=P^\nu_k(\sigma(q))$.
Thus, Conditions (S2) and (S3) hold as well, showing that $\uP^\nu$ is an
$n$-system.  Moreover, it follows from the above that
$\sigma(q)$ is a switch number or a division number of $\uP^\nu$
if and only if $q$ is of the same type for $\uP$.
\end{proof}

\begin{corollary}
\label{spectra:cor:prop:P^nu}
Let $T\colon\bR^n\to\bR^m$ be a linear map.  For each
self-similar proper $n$-system $\uP$, the point
$\mu_T(\uP)$ belongs to the connected component of
$n^{-1}T(1,\dots,1)$ in $\image^*(\mu_T)$.
\end{corollary}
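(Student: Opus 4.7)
The plan is to exhibit, for each self-similar proper $n$-system $\uP$, a continuous path inside $\image^*(\mu_T)$ joining $\mu_T(\uP)$ to $n^{-1}T(1,\dots,1)$, using the construction $\uP\mapsto\uP^\nu$ of Proposition \ref{spectra:prop:P^nu}. For each $p\in(0,1]$, I take $\nu_p(x)=x^p$; this is continuous, strictly increasing and unbounded on $[0,\infty)$, so $\uP_p:=\uP^{\nu_p}$ is an $n$-system. It is still proper because $(P_1)^p$ is unbounded, and the identity $\nu_p(\rho x)=\rho^p\nu_p(x)$ together with $\uP(\rho q)=\rho\uP(q)$ shows via $\sigma_p(q)=\sum_j\nu_p(P_j(q))$ that $\uP_p$ is self-similar of ratio $\rho^p>1$. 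Hence $\mu_T(\uP_p)\in\image^*(\mu_T)$ for every $p\in(0,1]$, and $\mu_T(\uP_1)=\mu_T(\uP)$.

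Applying Corollary \ref{spectra:cor:prop:cvxE} to $\uP_p$: if $w_1<w_2<\cdots<w_s$ are the division numbers of $\uP$ in one period $[w_1,\rho w_1]$ and $\ua^{(i)}=\uP(w_i)=(a_1^{(i)},\dots,a_n^{(i)})$, then Proposition \ref{spectra:prop:P^nu} identifies the division numbers of $\uP_p$ with the $\sigma_p(w_i)$ and the corresponding values with $((a_1^{(i)})^p,\dots,(a_n^{(i)})^p)$. Consequently
\[
 \cK(\uP_p)=\mathrm{conv}\bigl\{\xi_i(p)\,;\,1\le i\le s\bigr\},\quad
 \xi_i(p):=\frac{\bigl((a_1^{(i)})^p,\dots,(a_n^{(i)})^p\bigr)}{(a_1^{(i)})^p+\cdots+(a_n^{(i)})^p},
\]
and each coordinate of $\mu_T(\uP_p)=\inf T(\cK(\uP_p))$ is the minimum of a linear form on the finite set $\{\xi_i(p)\}_{i=1}^s$. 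So $p\mapsto\mu_T(\uP_p)$ is continuous on $(0,1]$, and the heart of the argument is to prove that $\xi_i(p)\to(1/n,\dots,1/n)$ as $p\to 0^+$, which requires $a_j^{(i)}>0$ for all $i,j$.

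This positivity is the main obstacle. The key observation is that each coordinate $P_j$ is non-decreasing, since by (S2) its slopes lie in $\{0,1\}$. If $P_1(q_0)=0$ for some $q_0$ in the self-similarity domain of $\uP$, then $P_1(\rho q_0)=\rho P_1(q_0)=0$ forces $P_1\equiv 0$ on $[q_0,\rho q_0]$, hence on $[q_0,\infty)$ by self-similarity, contradicting properness. Combined with (S1), this yields $a_j^{(i)}\ge P_1(w_i)>0$, so $(a_j^{(i)})^p\to 1$ and $\xi_i(p)\to(1/n,\dots,1/n)$, giving $\mu_T(\uP_p)\to n^{-1}T(1,\dots,1)$. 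To close the path at $p=0$, I would verify that this limit point itself lies in $\image^*(\mu_T)$ by applying Proposition \ref{spectra:prop:P^nu} to $\nu(x)=\log(1+x)$: the resulting proper $n$-system $\uP^\nu$ has division point values $\log(1+\rho^k a_j^{(i)})$ and associated sums $\sum_j\log(1+\rho^k a_j^{(i)})$, each equal to $k\log\rho$ times the appropriate multiplicity plus bounded terms, forcing every normalization to tend to $(1/n,\dots,1/n)$ and hence $\mu_T(\uP^\nu)=n^{-1}T(1,\dots,1)$. Extending $p\mapsto\mu_T(\uP_p)$ continuously to $[0,1]$ by sending $0$ to this common limit then yields the desired path in $\image^*(\mu_T)$.
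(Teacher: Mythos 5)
Your proof is correct and follows essentially the same path as the paper: you use the family $\uP^{\nu_p}$ with $\nu_p(x)=x^p$, $p\in(0,1]$, appeal to Corollary \ref{spectra:cor:prop:cvxE} to get a finite generating set of $\cK(\uP^{\nu_p})$ depending continuously on $p$, verify $P_1>0$ on the division numbers so that the generators converge to $n^{-1}(1,\dots,1)$, and then glue on the endpoint. The only divergence is how you realize $n^{-1}T(1,\dots,1)\in\image^*(\mu_T)$: you apply Proposition~\ref{spectra:prop:P^nu} with $\nu(x)=\log(1+x)$, whereas the paper simply exhibits the proper $n$-system attached to the canvas $\{(i+1,\dots,i+n)\,;\,i\ge 0\}$, which has $\cF=\{n^{-1}(1,\dots,1)\}$ by inspection -- a slightly shorter route to the same endpoint.
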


\begin{proof}
Let $\uP=(P_1,\dots,P_n)\colon[w_0,\infty) \to\Delta_n$
be a proper self-similar $n$-system, let $w_0<w_1<w_2<\dots$
be its division numbers, let $\rho>1$ be such that
$\uP(\rho q)=\rho\uP(q)$ for each $q\ge w_0$, and
let $s\ge 1$ be the index for which $w_{s+1}=\rho w_1$.
For each $\lambda\in\ (0,1]$, we denote by $\uP^\lambda$,
instead of $\uP^{\nu_\lambda}$, the $n$-system given by
the above proposition for the map $\nu_\lambda$ sending
each $a\ge 0$ to $a^\lambda$.  Then, for each $q\ge w_0$,
we have
\[
 \uP^\lambda(\sigma_\lambda(q))
  = (P_1(q)^\lambda,\dots,P_n(q)^\lambda)
 \quad
 \text{where}
 \quad
 \sigma_\lambda(q)=P_1(q)^\lambda+\dots+P_n(q)^\lambda.
\]
It follows that $\uP^\lambda$ is proper and self-similar
with $\uP^\lambda(\rho^\lambda t)=\rho^\lambda\uP^\lambda(t)$
for each $t\ge \sigma_\lambda(w_0)$.  Moreover, its division
numbers are $\sigma_\lambda(w_0)<\sigma_\lambda(w_1)
<\sigma_\lambda(w_2)<\cdots$ and we have
$\sigma_\lambda(w_{s+1})=\rho^\lambda\sigma_\lambda(w_1)$.
According to Corollary \ref{spectra:cor:prop:cvxE}, this implies
that $\cK(\uP^\lambda)$ is the convex hull of the set
\[
 E_\lambda:=\{
   \sigma_\lambda(w_i)^{-1}\uP^\lambda(\sigma_\lambda(w_i))
   \,;\,
   1\le i\le s\},
\]
and so $\mu_T(\uP^\lambda)=\min T(E_\lambda)$ is a continuous
function of $\lambda$ on $(0,1]$.  Since $\uP$ is proper and
self-similar, we also note that $P_1$ vanishes nowhere on
$[w_0,\infty)$ except possibly at $w_0$ if $w_0=0$.  Thus,
as $\lambda$ goes to $0$ in $(0,1]$, the ratios
$P_j(w_i)^\lambda/P_1(w_i)^\lambda$ with $1\le i\le s$ and
$1\le j\le n$ all converge to $1$.  Consequently, the
elements of $E_\lambda$ converge to $n^{-1}\ue$ where
$\ue=(1,\dots,1)$, and so $\mu_T(\uP^\lambda)$ converges
to $n^{-1}T(\ue)$.  Moreover, the proper $n$-system $\uS$
attached to the canvas $\{(i+1,\dots,i+n)\,;\,i\ge 0\}$
has $\cF(\uS)=\{n^{-1}\ue\}$ and so $\mu_T(\uS)=n^{-1}T(\ue)$
belongs to $\image^*(\mu_T)$.  This means that the points
$\mu_T(\uP)$ and $\mu_T(\uS)$
are connected by an arc in $\image^*(\mu_T)$.  Thus they
belong to the same connected component of that set.
\end{proof}

\section{Approximation of sets}
\label{sec:approx}

For any subsets $E$ and $F$ of $\bR^n$, we define
\[
 E+F = \{\ux+\uy\,;\, \ux\in E \text{ and } \uy\in F\}.
\]
When $E$ and $F$ are bounded, we define their mutual
distance $\dist(E,F)$ to be the infimum of all $\epsilon >0$
such that
\[
 E\subseteq F+[-\epsilon,\epsilon]^n
 \et
 F\subseteq E+[-\epsilon,\epsilon]^n.
\]
This function satisfies the triangle inequality:
\[
 \dist(E,G)\le \dist(E,F)+\dist(F,G)
\]
for any bounded subsets $E$, $F$ and $G$ of $\bR^n$.
It turns the set of all compact
subsets of $[0,1]^n$ into a complete metric space.

The goal of this section is the following
approximation result.

\begin{lemma}
\label{approx:lemma:selection}
Let $\uR$ be an $n$-system on some unbounded
closed subinterval $[q_0,\infty)$ of $(0,\infty)$,
and let $\epsilon>0$. Then, there exists $u\ge q_0$
such that
\begin{equation}
\label{approx:lemma:selection:eq1}
 \{q^{-1}\uR(q)\,;\, q\ge u\} \subseteq
  \cF(\uR) + [-\epsilon,\epsilon]^n.
\end{equation}
Moreover, for each $u\ge q_0$, there exists $v>u$
such that
\begin{equation}
\label{approx:lemma:selection:eq2}
 \cF(\uR) \subseteq
   \{q^{-1}\uR(q)\,;\, u\le q\le v\}
    + [-\epsilon,\epsilon]^n.
\end{equation}
\end{lemma}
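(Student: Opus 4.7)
The plan is to derive both assertions from the compactness of $\cF(\uR)$ as a subset of $[0,1]^n$, which rests only on condition (S1). That condition forces $0\le R_j(q)\le q$ for every $q\ge q_0$ and each $j$, so the normalized trajectory $q\mapsto q^{-1}\uR(q)$ takes values in the compact cube $[0,1]^n$, and consequently $\cF(\uR)\subseteq[0,1]^n$. Moreover $\cF(\uR)$ is closed: given a convergent sequence $\ux_k\to\ux$ with each $\ux_k\in\cF(\uR)$, a standard diagonal extraction from the defining sequences for the $\ux_k$ produces a single strictly increasing unbounded sequence $(t_k)$ with $t_k^{-1}\uR(t_k)\to\ux$, so $\ux\in\cF(\uR)$. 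Thus $\cF(\uR)$ is compact.

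For \eqref{approx:lemma:selection:eq1} I would argue by contradiction. If no $u\ge q_0$ satisfies the inclusion, then applying the negated statement at $u=q_0+k$ for each integer $k\ge 0$ yields $q_k\ge q_0+k$ such that $q_k^{-1}\uR(q_k)$ has sup-norm distance at least $\epsilon$ from $\cF(\uR)$. Since $q_k\to\infty$ I can pass to a strictly increasing subsequence, and then compactness of $[0,1]^n$ furnishes a further convergent sub-subsequence $q_{k_j}^{-1}\uR(q_{k_j})\to\ux$. By definition $\ux\in\cF(\uR)$, yet the sup-norm distance from $\ux$ to $\cF(\uR)$ passes to the limit to give a value $\ge\epsilon$, contradicting $\ux\in\cF(\uR)$.

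For \eqref{approx:lemma:selection:eq2} I would use a finite covering argument. Fix $u\ge q_0$. By compactness of $\cF(\uR)$, cover it by finitely many sup-norm open balls $B(\ux_j,\epsilon/2)$ with centers $\ux_j\in\cF(\uR)$, $j=1,\dots,N$. Since each $\ux_j$ is the limit of $q_i^{-1}\uR(q_i)$ along a strictly increasing unbounded sequence, I can select $q_j>u$ with $\|q_j^{-1}\uR(q_j)-\ux_j\|_\infty<\epsilon/2$. Setting $v=\max_{1\le j\le N}q_j$, every $\ux\in\cF(\uR)$ lies in some $B(\ux_j,\epsilon/2)$, and the triangle inequality then places it within sup-norm distance $\epsilon$ of the point $q_j^{-1}\uR(q_j)\in\{q^{-1}\uR(q)\,;\,u\le q\le v\}$, which is the required inclusion.

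I do not foresee any significant obstacle here: both parts reduce to standard arguments in point-set topology, and the only input from the theory of $n$-systems is condition (S1), which confines the normalized trajectory $q\mapsto q^{-1}\uR(q)$ to a compact set and thereby makes $\cF(\uR)$ compact.
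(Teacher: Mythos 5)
Your argument is correct and follows essentially the same route as the paper: for \eqref{approx:lemma:selection:eq1} the paper likewise extracts an unbounded sequence of normalized trajectory points that would stay outside a finite $\epsilon/2$-neighborhood of $\cF(\uR)$ and derives a contradiction from Bolzano--Weierstrass, and for \eqref{approx:lemma:selection:eq2} it uses the identical finite $\epsilon/2$-net plus the definition of $\cF(\uR)$ to pick $q_j>u$ near each net point and set $v=\max q_j$. Your preliminary verification that $\cF(\uR)$ is closed is harmless but not actually needed (boundedness of $\cF(\uR)\subseteq[0,1]^n$ already supplies the finite net, and the contradiction in part one only uses that the limit of the subsequence lies in $\cF(\uR)$ by definition), which is why the paper omits it.
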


Note that, if $u=u_0$ satisfies \eqref{approx:lemma:selection:eq1},
then any choice of $u$ with $u\ge u_0$ does so.  Similarly,
if for a given $u\ge q_0$, the choice of $v=v_0$ satisfies
\eqref{approx:lemma:selection:eq2}, then any choice of $v$ with
$v\ge v_0$ does so.

\begin{proof}
Since $\cF(\uR)\subseteq [0,1]^n$, there exist finitely
many points $\ux_1,\dots,\ux_N \in \cF(\uR)$ such that
\[
 \cF(\uR)
 \subseteq
 \cO:=\{\ux_1,\dots,\ux_N\} + (-\epsilon/2,\epsilon/2)^n.
\]
Since $\cO$ is an open set, the difference $[0,1]^n\setminus\cO$
is compact.  By definition of $\cF(\uR)$, there is no
unbounded sequence of real numbers $q_0<q_1<q_2<\cdots$
such that $q_i^{-1}\uR(q_i)\in [0,1]^n\setminus\cO$
for all $i\ge 1$ (otherwise a subsequence of
$(q_i^{-1}\uR(q_i))_{i\ge 1}$ would converge to a point of
$\cF(\uR)$ in $[0,1]^n\setminus\cO$, and there is no such
point).  Thus there exists $u\ge q_0$ such that
\[
 \{q^{-1}\uR(q)\,;\,q\ge u\}
  \subseteq \cO
  \subseteq \cF(\uR)+ (-\epsilon/2,\epsilon/2)^n.
\]
This proves the first assertion of the lemma.  Finally,
let $u\ge q_0$ be arbitrary.  For each $j=1,\dots,N$,
there exists $q_j>u$ such that
$\|\ux_j-q_j^{-1}\uR(q_j)\|_\infty < \epsilon/2$.
Then, we have
\[
 \cF(\uR)
  \subseteq
  \cO
  \subseteq
  \{q_j^{-1}\uR(q_j)\,;\,1\le j\le N\}
    +  (-\epsilon,\epsilon)^n,
\]
and so \eqref{approx:lemma:selection:eq2} holds with
$v=\max\{q_1,\dots,q_N\}$.
\end{proof}

\begin{corollary}
\label{approx:cor:lemma:selection}
When $u$ and $v$ satisfy both conditions of
the lemma, we have
\[
 \dist(\cF(\uR),F)\le \epsilon
 \quad\text{where}\quad
F=\{q^{-1}\uR(q)\,;\,u\le q\le v\}.
\]
\end{corollary}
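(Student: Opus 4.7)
The plan is to verify that the Corollary follows almost immediately by unwinding the definition of $\dist$ and combining the two conclusions of Lemma~\ref{approx:lemma:selection}. Recall that $\dist(\cF(\uR), F) \le \epsilon$ means both
\[
 \cF(\uR) \subseteq F + [-\epsilon,\epsilon]^n
 \quad\text{and}\quad
 F \subseteq \cF(\uR) + [-\epsilon,\epsilon]^n.
\]

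First I would handle the inclusion $\cF(\uR) \subseteq F + [-\epsilon,\epsilon]^n$. This is literally condition \eqref{approx:lemma:selection:eq2} of the lemma, so no further argument is required.

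Next I would verify $F \subseteq \cF(\uR) + [-\epsilon,\epsilon]^n$. By definition, $F = \{q^{-1}\uR(q) : u \le q \le v\}$ is a subset of $\{q^{-1}\uR(q) : q \ge u\}$, which by \eqref{approx:lemma:selection:eq1} is contained in $\cF(\uR) + [-\epsilon,\epsilon]^n$. Combining the two inclusions yields the claim. There is no real obstacle here: the Corollary is just a restatement packaging both halves of the lemma as a single metric bound, and the only thing to notice is that restricting the range of $q$ from $[u,\infty)$ to $[u,v]$ preserves the first inclusion, which is why both conditions can be met simultaneously.
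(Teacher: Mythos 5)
Your argument is correct and is exactly the intended (indeed the only natural) deduction: the first displayed inclusion of Lemma~\ref{approx:lemma:selection} gives $F\subseteq \cF(\uR)+[-\epsilon,\epsilon]^n$ after restricting $q$ to $[u,v]$, and the second gives $\cF(\uR)\subseteq F+[-\epsilon,\epsilon]^n$, so by the definition of $\dist$ as the infimum of admissible $\epsilon$, the bound $\dist(\cF(\uR),F)\le\epsilon$ follows. The paper leaves this corollary without proof precisely because it unwinds this way, so your write-up matches the paper's implicit reasoning.
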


\section{Two types of deformations}
\label{sec:deformation}

Our next goal is to show that, for each linear map
$T\colon \bR^n\to\bR^m$, the associated spectrum
$\image^*(\mu_T)$ is a compact subset of $\bR^m$, and that
a dense subset of that set is provided by the points
$\mu_T(\uS)$ where $\uS$ runs through all self-similar
rigid $n$-systems of a given mesh.  This requires
constructing new $n$-systems from others
by restricting them to suitable compact intervals,
and then by modifying and rescaling the resulting
pieces in order to glue them together.  To select
suitable intervals, we use Lemma \ref{approx:lemma:selection}
and its corollary from the previous section.  Then, we make
two types of deformations on the pieces.  The first one
is provided by the following result which allows us
to modify a rigid $n$-system on a compact interval near
the end of that interval.

\begin{proposition}
\label{deformation:prop:extension}
Let $\delta>0$,
let $\uP=(P_1,\dots,P_n)\colon [u,v]\to\Delta_n$
be a rigid $n$-system of mesh $\delta$ defined
on a compact subinterval $[u,v]$ of $(0,\infty)$
and let $\uc=(c_1,\dots,c_n)$ be a strictly
increasing sequence of positive multiples of $\delta$.
Suppose that $P_j(v)\le c_j$ for
$j=1,\dots,n$, and let $w=c_1+\cdots+c_n$.
Then there exist a rigid $n$-system
$\tuP=(\tP_1,\dots,\tP_n)\colon [u,w]\to\Delta_n$
of the same mesh $\delta$ and a surjective
map $A\colon [u,w]\to [u,v]$ with the
following properties:
\begin{itemize}
 \item[(i)] $\tuP(q)=\uP(q)$ and $A(q)=q$
   for each $q\in[u,v]$ such that $P_n(q)<P_n(v)$;
 \item[(ii)] $\tuP(w)=\uc$;
 \item[(iii)] $0\le \tP_j(q)-P_j(A(q)) \le c_j-P_j(v)$
   for each $q\in[u,w]$ and each $j=1,\dots,n$.
\end{itemize}
\end{proposition}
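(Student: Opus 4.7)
The plan is to proceed by induction on the dimension $n$. The key observation is that each component of an $n$-system is non-decreasing with slopes in $\{0,1\}$, so the top coordinate $P_n$ equals its final value $P_n(v)$ on a ``top plateau'' $[q_0,v]$, where $q_0:=\min\{q\in[u,v]\,:\,P_n(q)=P_n(v)\}$. Since property~(i) only protects $\uP$ on $[u,q_0)$, where $P_n<P_n(v)$, we have full freedom to redesign the system on $[q_0,v]$ and beyond. The strategy is to keep $\uP$ intact on $[u,q_0]$, insert a single slope-$\ue_n$ segment to push the top coordinate from $P_n(v)$ up to $c_n$, and then handle the lower $n-1$ coordinates recursively by invoking the proposition in dimension $n-1$.

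The base case $n=1$ is trivial. For $n\ge 2$, since $v$ is a switch number of the rigid system $\uP$, the point $\uP(v)$ lies in $\delta\bZ^n$ with strictly increasing positive coordinates; in particular $P_n(v)\in\delta\bZ$. Moreover, if $q_0>u$ then $P_n$ had slope $1$ just before $q_0$, so $q_0$ is itself a switch number and $\uP(q_0)\in\delta\bZ^n$ is also strictly increasing and positive. Setting $q_1:=q_0+(c_n-P_n(v))$, I would define $\tuP(q)=\uP(q)$ and $A(q)=q$ on $[u,q_0]$; $\tuP(q)=\uP(q_0)+(q-q_0)\ue_n$ and $A(q)=q_0$ on $[q_0,q_1]$; and on $[q_1,w]$ apply the inductive hypothesis to the rigid $(n-1)$-system $\tilde\uP(\tilde q):=(P_1(\tilde q+P_n(v)),\dots,P_{n-1}(\tilde q+P_n(v)))$ defined on $[q_0-P_n(v),v-P_n(v)]$, with target $(c_1,\dots,c_{n-1})$, obtaining $\tuR$ and a surjective $\tilde A$. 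These lift via $\tuP(q)=(\tuR(q-q_1+q_0-P_n(v)),c_n)$ and $A(q)=\tilde A(q-q_1+q_0-P_n(v))+P_n(v)$. The degenerate sub-case $q_0=v$, where $\tilde\uP$ reduces to a point, is handled directly by attaching at $v$ a cascade of slopes $\ue_n,\ue_{n-1},\dots,\ue_1$ of respective durations $c_j-P_j(v)$.

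The verification then splits across the pieces. Gluings at $q_0$ (no switch, slope $\ue_n$ continues) and at $q_1$ (downward switch from $\ue_n$ to $\ue_k$ with $k<n$) are valid, and the new switch point $(P_1(q_0),\dots,P_{n-1}(q_0),c_n)$ at $q_1$ lies in $\delta\bZ^n$ and is strictly increasing since $P_{n-1}(q_0)<P_n(q_0)=P_n(v)\le c_n$. Switch points in the third piece are of the form $(\tuR\text{-switch},c_n)$, with strict ordering at the top following from $\tilde R_{n-1}(\tilde q)\le c_{n-1}<c_n$, itself a consequence of the inductive~(iii) combined with monotonicity of $\tilde P_{n-1}$. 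Property~(iii) for $\tuP$ then reduces piecewise to direct computation or to the inductive version; in the third piece the bound $c_n-P_n(v)$ for $j=n$ is attained precisely because $A(q)\in[q_0,v]$ there, on which $P_n\equiv P_n(v)$. Surjectivity of $A$ onto $[u,v]$ follows since the first piece covers $[u,q_0]$ while the third piece covers $[q_0,v]$ via $\tilde A$. I expect the main obstacle to be systematically tracking the strict-inequality conditions at every new switch point to preserve non-degeneracy---in particular the chain $P_{n-1}(q_0)<P_n(v)\le c_n$ at $q_1$ and the analogous use of $c_{k-1}<c_k$ in the cascade handling of the $q_0=v$ case.
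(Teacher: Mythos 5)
Your proof is correct and takes a genuinely different route from the paper's. The paper first proves a special case in which $\uc$ and $\uP(v)$ differ in exactly one coordinate $m$, by modifying the finite canvas $(\ua^{(i)})_{1\le i\le s}$ directly (inserting a new canvas point or shifting the $m$-th coordinates of the tail of the canvas, together with a shift of the corresponding switch numbers), and then handles the general case by applying this special case iteratively for $m=n,n-1,\dots,1$, all while remaining in dimension $n$. Your approach instead descends in dimension: you peel off the top coordinate $P_n$ explicitly (one slope-$\ue_n$ segment of length $c_n-P_n(v)$ inserted at the start $q_0$ of the top plateau), and delegate the remaining $n-1$ coordinates to the inductive hypothesis by viewing $(P_1,\dots,P_{n-1})$ restricted to $[q_0,v]$, suitably reparametrized, as a rigid $(n-1)$-system. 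If one unwinds the recursion, the final object is essentially the same as the paper's, but the packaging is different: the paper's argument is more explicitly combinatorial (everything expressed via canvases), while yours is recursive and avoids touching the canvas machinery at all. Both are valid; yours is arguably cleaner to state, at the cost of one extra observation you leave implicit.

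That observation concerns the gluing at $q_1$. You assert the gluing is valid, but you need $\tuR$ at the left endpoint $a:=q_0-P_n(v)$ of its domain to agree with $\tilde\uP(a)=(P_1(q_0),\dots,P_{n-1}(q_0))$, so that the third piece matches the end of the slope-$\ue_n$ segment. This does not follow directly from property (i) of the inductive hypothesis (which only controls points where $\tilde P_{n-1}<\tilde P_{n-1}(v-P_n(v))$, and $\tilde P_{n-1}$ could be constant on the whole plateau). It does, however, follow from (iii) together with condition (S1): summing the inequality $\tR_j(a)\ge\tilde P_j(\tilde A(a))$ over $j$ gives $a\ge\tilde A(a)$, and since $\tilde A$ takes values in $[a,v-P_n(v)]$ we get $\tilde A(a)=a$, whence equality holds componentwise, i.e.\ $\tuR(a)=\tilde\uP(a)$. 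With that line added, the proof is complete. (You also need, and do correctly supply, the checks that $q_0-P_n(v)>0$ using non-degeneracy of $\uP$ at $q_0$, that $q_0$ is a switch number of $\uP$ when $q_0>u$, and that $\tR_{n-1}\le c_{n-1}<c_n$ so the lifted switch points stay strictly increasing.)
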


\begin{proof}
Suppose first that there exists an index $m$
with $1\le m\le n$ such that $c_m>P_m(v)$
while $c_j=P_j(v)$ for any other index $j$
in the same range. Let $(\ua^{(1)},\dots,\ua^{(s)})$
be the finite canvas attached to $\uP$,
and let $(q_1,\dots,q_s)$ be the corresponding
sequence of switch numbers, so that $u=q_1$,
$v=q_s$ and $\uP(q_i)=\ua^{(i)}$ for $i=1,\dots,s$.
For $i=1,\dots, s-1$, we also denote by
$(k_i,\ell_{i+1})$ the pair of transition indices
characterized by
$P'_{k_i}(q_i^+) = P'_{\ell_{i+1}}(q_{i+1}^-)=1$.
Define
\begin{equation}
 \label{deformation:prop:extension:eq1}
 \delta_m=c_m-P_m(v),
\end{equation}
and denote by $r$ the smallest index with
$1\le r\le s$ such that
\[
 a_{j}^{(r)}=\cdots=a_{j}^{(s)}
 \quad
 \text{for $j=m,\dots,n$.}
\]
Then we have $\ell_i<m$ for each $i$ with
$r<i\le s$, and moreover $\ell_r\ge m$
if $r\ge 2$.  In the case where $r\ge 2$ and $\ell_r>m$,
we form the canvas
\[
 (\tua^{(1)},\dots,\tua^{(s+1)})
 = (\ua^{(1)},\dots,\ua^{(r)},\tua^{(r+1)},\dots,\tua^{(s+1)})
\]
where $\tua^{(r+1)},\dots,\tua^{(s+1)}$ are
obtained respectively from $\ua^{(r)},\dots,\ua^{(s)}$
by replacing their $m$-th coordinate by $c_m$.
Its associated transition indices and switch numbers
are given by
\[
 (\tk_i,\tell_{i+1})
   =\begin{cases}
     (k_i,\ell_{i+1}) &\text{if $1\le i \le r-1$,}\\
     (m,m) &\text{if $i=r$,}\\
     (k_{i-1},\ell_i) &\text{if $r+1\le i \le s+1$,}
    \end{cases}
 \quad
 \tq_i
   =\begin{cases}
      q_i &\text{if $1\le i \le r$,}\\
      q_{i-1}+\delta_m &\text{if $r+1\le i\le s+1$.}
    \end{cases}
\]
In the complementary cases where $r=1$ and where $r\ge 2$
with $\ell_r=m$, we construct a canvas
$(\tua^{(1)},\dots,\tua^{(s)})$ directly from
$(\ua^{(1)},\dots,\ua^{(s)})$ by replacing
the $m$-th coordinate $a_m^{(i)}$ of $\ua^{(i)}$
by $c_m$ for $i=r,\dots,s$.  Its transition indices
remain the same as those of the original canvas,
and its switch numbers are $\tq_i=q_i$ for
$i=1,\dots,r-1$ and $\tq_i=q_i+\delta_m$ for
$i=r,\dots,s$.  In all cases, the corresponding
$n$-system $\tuP=(\tP_1,\dots,\tP_n)
\colon [q_1,q_s+\delta_m]\to \Delta_n$
satisfies
\[
 \tP_j(q)
 =\begin{cases}
    P_j(q)
     &\text{if $q_1\le q\le q_r$,}\\
    P_j(q_r)
     &\text{if $q_r\le q\le q_r+\delta_m$ and $j\neq m$,}\\
    P_m(q_r)+q-q_r
     &\text{if $q_r\le q\le q_r+\delta_m$ and $j=m$,}\\
    P_j(q-\delta_m)
     &\text{if $q_r+\delta_m\le q \le q_s+\delta_m$ and $j\neq m$,}\\
    P_m(q-\delta_m)+\delta_m = c_m
     &\text{if $q_r+\delta_m\le q \le q_s+\delta_m$ and $j=m$.}
  \end{cases}
\]
So, it satisfies the conditions (i) to (iii) for
the choice of $w=q_s+\delta_m$ and of the map
$A\colon [q_1,q_s+\delta_m]\to [q_1,q_s]$ given by
\[
 A(q)
  =\begin{cases}
     q &\text{if $q_1\le q\le q_r$,}\\
     q_r &\text{if $q_r\le q\le q_r+\delta_m$,}\\
     q-\delta_m &\text{if $q_r+\delta_m\le q \le q_s+\delta_m$.}
  \end{cases}
\]
This proves the proposition in the case where $\uc$
and $\uP(v)$ differ by at most one coordinate, because,
if $\uc=\uP(v)$, it suffices to choose $w=v$,
$\tuP=\uP$ and to take for $A$ the identity
map of $[u,v]$.  Moreover, for each $q\in [u,v]$ with
$P_n(q)< P_n(v)$, the constructed map $\tuP$
satisfies $\tP_n(q)<\tP_n(w)$, because $\tP_n(q)=P_n(q)$
and $P_n(v) \le c_n=\tP_n(w)$.

For the general case, define
\[
 \uc^{(m)}=(P_1(v),\dots,P_{m-1}(v),c_m,\dots,c_n)
 \et
 v^{(m)}=v+\delta_m+\cdots+\delta_n,
\]
for $m=1,\dots,n+1$, with $\delta_m$ given
by \eqref{deformation:prop:extension:eq1} as above.
Starting from the $n$-system $\uP^{(n+1)}=\uP$
on $[u,v^{(n+1)}]=[u,v]$, the special case
proved above allows us to construct recursively,
for each $m=n,\dots,1$, an $n$-system $\uP^{(m)}$
on $[u,v^{(m)}]$ and a surjective
map $A^{(m)}\colon [u,v^{(m)}]\to [u,v^{(m+1)}]$
such that
\begin{itemize}
 \item[1)] $\uP^{(m)}(q)=\uP(q)$ and $A^{(m)}(q)=q$
   for each $q\in[u,v]$ such that $P_n(q)<P_n(v)$,
 \item[2)] $\uP^{(m)}(v^{(m)})=\uc^{(m)}$,
 \item[3)] $P^{(m)}_j(q)=P^{(m+1)}_j(A^{(m)}(q))$
   for each $q\in[u,v^{(m)}]$ and each $j=1,\dots,n$
   with $j\neq m$,
 \item[4)] $0\le P^{(m)}_m(q)-P^{(m+1)}_m(A^{(m)}(q))
             \le \delta_m$ for each $q\in[u,v^{(m)}]$.
\end{itemize}
Then the composite map
$A=A^{(n)}\circ\cdots\circ A^{(1)}\colon
[u,v^{(1)}]\to [u,v]$ is surjective.  For that map
and for the choice of $w=v^{(1)}$, the $n$-system
$\tuP=\uP^{(1)}$ satisfies all conditions
(i) to (iii).
\end{proof}

\begin{example}
\label{deformation:example}
To illustrate the above construction, suppose that
$\uP\colon[25, 42]\to\Delta_4$ is the $4$-system
associated with the integral canvas
\[
 (1,3,\underline{9},12), \,
 (1,\underline{3},12,\overline{15}), \,
 (\underline{1},\overline{6},12,15), \,
 (6,\overline{9},12,15)
\]
and that $\uc=(8,12,16,20)$ (using the same
convention as in Example \ref{systems:example}
for underlining and overlining coordinates
of points in a canvas).
Then starting with $\uP^{(5)}=\uP$, the canvases
associated to $\uP^{(4)},\dots,\uP^{(1)}$ are
respectively
\[
 \begin{aligned}
 \uP^{(4)}\colon &(1,3,\underline{9},12), \,
 (1,\underline{3},12,\overline{20}), \,
 (\underline{1},\overline{6},12,20), \,
 (6,\overline{9},12,20),\\
 \uP^{(3)}\colon &(1,3,\underline{9},12), \,
 (1,3,\underline{12},\overline{20}), \,
 (1,\underline{3},\overline{16},20), \,
 (\underline{1},\overline{6},16,20), \,
 (6,\overline{9},16,20),\\
 \uP^{(2)}\colon &(1,3,\underline{9},12), \,
 (1,3,\underline{12},\overline{20}), \,
 (1,\underline{3},\overline{16},20), \,
 (\underline{1},\overline{6},16,20), \,
 (6,\overline{12},16,20),\\
 \uP^{(1)}\colon &(1,3,\underline{9},12), \,
 (1,3,\underline{12},\overline{20}), \,
 (1,\underline{3},\overline{16},20), \,
 (\underline{1},\overline{6},16,20), \,
 (\underline{6},\overline{12},16,20), \,
 (\overline{8},12,16,20).
 \end{aligned}
\]
\end{example}

In practice, we will use the proposition in the
following form.

\begin{corollary}
\label{deformation:cor:prop:extension}
With the notation and hypotheses of Proposition
\ref{deformation:prop:extension}, suppose that $v>nu$, and
choose $\epsilon>0$ such that $c_j\le P_j(v) + \epsilon v$
for $j=1,\dots,n$. Then, we have
\begin{itemize}
 \item[(i)] $\tuP(q)=\uP(q)$ for each $q\in [u,v/n)$,
 \item[(ii)] $\tuP(w)=\uc$,
 \item[(iii)] $\dist(E,\tE)\le n(n+1)\epsilon$,
\end{itemize}
where $E=\{q^{-1}\uP(q)\,;\, u\le q\le v\}$
and $\tE=\{q^{-1}\tuP(q)\,;\, u\le q\le w\}$.
\end{corollary}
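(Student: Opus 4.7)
The plan is to derive each of the three conclusions directly from the corresponding properties of Proposition~\ref{deformation:prop:extension}, using the hypothesis $v>nu$ only to control the ratio $v/q$ that appears in the sup-norm estimates.

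For (i), Condition~(S1) gives $P_n(q)\le q$ and $P_n(v)\ge v/n$. Hence for $q\in[u,v/n)$ one has $P_n(q)<P_n(v)$, and property~(i) of the Proposition yields $\tuP(q)=\uP(q)$. Part~(ii) is just property~(ii) of the Proposition.

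For (iii), I would first package the basic estimates. Put $\delta_j=c_j-P_j(v)$, so by hypothesis $0\le\delta_j\le\epsilon v$ and $w-v=\sum_j\delta_j\le n\epsilon v$. Property~(iii) of the Proposition gives $|\tP_j(q)-P_j(A(q))|\le\epsilon v$ for every $q\in[u,w]$ and every $j$. From the explicit construction of $A$ in the proof of the Proposition as the composition $A^{(n)}\circ\cdots\circ A^{(1)}$, where each $A^{(m)}$ is the identity, a constant, or a translation by $-\delta_m$, the triangle inequality gives $|A(q)-q|\le\delta_1+\cdots+\delta_n\le n\epsilon v$. Finally, since the components of $\uP$ and $\tuP$ are nonnegative with sums $A(q)$ and $q$ respectively, $P_j(A(q))\le A(q)$ and $\tP_j(q)\le q$.

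With these in hand, both inclusions required to bound $\dist(E,\tE)$ follow by the same computation. For $q\in[u,v/n)$, part~(i) already gives $q^{-1}\tuP(q)=q^{-1}\uP(q)\in E\cap\tE$. For $q\in[v/n,w]$, set $q'=A(q)\in[u,v]$; then $q'^{-1}\uP(q')\in E$ and, writing
\[
 q^{-1}\tuP(q) - q'^{-1}\uP(q')
 = q^{-1}\bigl(\tuP(q)-\uP(q')\bigr) + (q^{-1}-q'^{-1})\uP(q'),
\]
the two summands are bounded in sup-norm by $q^{-1}\epsilon v\le n\epsilon$ and by $|q-q'|\,q^{-1}\le n^2\epsilon$ respectively, for a total of $n(n+1)\epsilon$. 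The opposite inclusion is symmetric: for each $q\in[v/n,v]$, surjectivity of $A$ produces some $q''\in A^{-1}(q)\subseteq[u,w]$, and the analogous expansion with the roles of $q$ and $q''$ interchanged gives the same bound. The only mild subtlety — and the reason the hypothesis $v>nu$ is needed — is that both estimates rely on $q\ge v/n$, i.e.\ $q^{-1}\le n/v$; elsewhere part~(i) makes the estimate trivial. No single step is genuinely difficult, but bookkeeping the estimate $|A(q)-q|\le n\epsilon v$ from the construction of $A$ is the least immediate ingredient.
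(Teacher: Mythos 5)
Your proof is correct and follows the same route as the paper's. The one place where you take a detour is in bounding $|A(q)-q|$: you reconstruct this from the explicit form of $A$ as a composition $A^{(n)}\circ\cdots\circ A^{(1)}$, which means opening up the proof of Proposition~\ref{deformation:prop:extension} rather than using only its statement. The paper gets the same bound more cleanly: summing the inequality of Proposition~\ref{deformation:prop:extension}(iii) over $j$ and using Condition~(S1), which forces $\sum_j \tP_j(q)=q$ and $\sum_j P_j(A(q))=A(q)$, one gets directly
\[
 0\le q-A(q)=\sum_{j=1}^n\big(\tP_j(q)-P_j(A(q))\big)\le\sum_{j=1}^n\big(c_j-P_j(v)\big)\le n\epsilon v,
\]
with no reference to the internal construction of $A$. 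You actually observe the ingredient for this ($\sum_j \tP_j(q)=q$ and $\sum_j P_j(A(q))=A(q)$) but do not use it for the bound; the paper's route keeps the corollary a clean consequence of the proposition as a black box. Your explicit treatment of both inclusions for $\dist(E,\tE)$ via surjectivity of $A$ is a small improvement in exposition over the paper, which leaves that step implicit.
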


\begin{proof}
For each $q\in[u,v/n)$, we have
\[
 P_n(v) \ge \frac{1}{n}\sum_{j=1}^n P_j(v) = \frac{v}{n}
   > q = \sum_{j=1}^n P_j(q) \ge P_n(q),
\]
and so $\tuP(q)=\uP(q)$ and $A(q)=q$ by Proposition
\ref{deformation:prop:extension} (i).
This proves part (i) of the corollary.
Part (ii) needs no proof as it is
the same as Proposition \ref{deformation:prop:extension} (ii).
For each $q\in [v/n,w]$, we find,
by Proposition \ref{deformation:prop:extension} (iii), that
\[
 0\le q-A(q)
    = \sum_{j=1}^n \big(\tP_j(q)-P_j(A(q))\big)
    \le \sum_{j=1}^n (c_j-P_j(v))
    \le n \epsilon v
    \le n^2\epsilon q,
\]
and also $\|\tuP(q)-\uP(A(q))\|_\infty
\le \epsilon v\le n\epsilon q$.  Then,
using $\|\uP(A(q))\|_\infty\le A(q)$, we find that
\[
 \begin{aligned}
 \|q^{-1}\tuP(q)-A(q)^{-1}\uP(A(q))\|_\infty
   &\le q^{-1}\|\tuP(q)-\uP(A(q))\|_\infty
     + |q^{-1}-A(q)^{-1}|\,\|\uP(A(q))\|_\infty\\
   &\le n\epsilon + q^{-1}|A(q)-q|\\
   &\le n(n+1)\epsilon
 \end{aligned}
\]
for each $q\in [v/n,w]$ and therefore also
for each $q\in [u,w]$ (because $\tuP(q)=\uP(q)$ and $A(q)=q$
when $q\in [u,v/n)$).  This yields part (iii) of
the corollary.
\end{proof}

The other type of deformation that we need is provided
by the next result, allowing us to increase by a given
constant $b$ the components of a rigid $n$-system.

\begin{lemma}
\label{deformation:lemma:RP}
Let $\uR=(R_1,\dots,R_n)\colon [u,v]\to\Delta_n$
be a rigid $n$-system of mesh $\delta$ defined
on a compact subinterval $[u,v]$ of $(0,\infty)$,
and let $b\ge 0$ be a multiple of $\delta$.  Then
the map $\uP\colon [u+nb,v+nb]\to\Delta _n$ given by
\[
 \uP(q)=(b+R_1(q-nb),\dots,b+R_n(q-nb))
\]
for each $q\in [u+nb,v+nb]$ is also a rigid $n$-system
of mesh $\delta$.  Moreover, we have
\[
 \|\,(q+nb)^{-1}\uP(q+nb)-q^{-1}\uR(q)\,\|_\infty
 \le \frac{(n+1)b}{u+nb}\,,
\]
for each $q\in [u,v]$.
\end{lemma}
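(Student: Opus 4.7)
My plan is to treat the construction of $\uP$ from $\uR$ as a simple affine reparametrization: a rightward shift of $nb$ in the argument and a vertical translation of each component by the constant $b$. Under this lens the two assertions become essentially bookkeeping, and the proof should be organized into (A) verifying the axioms (S1)--(S3) of Definition \ref{systems:def:n-system} for $\uP$, (B) verifying that the rigidity of $\uR$ transfers to $\uP$, and (C) establishing the distance estimate by a short algebraic manipulation.

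For (A), I would argue that (S1) holds because adding the same constant $b$ to each component of $\uR(q-nb)$ preserves the order $R_1 \le \cdots \le R_n$ and the nonnegativity (since $b \ge 0$); and the sum collapses to $nb + (q-nb) = q$. For (S2) and (S3), since $\uP(q) - \uR(q-nb) = (b,\dots,b)$ is constant, the one-sided derivatives of $\uP$ at $q$ coincide with those of $\uR$ at $q-nb$, so the local choice of indices $k,\ell$ and the equality $P_\ell(q) = \cdots = P_k(q)$ whenever $k > \ell$ both follow directly from the corresponding conditions for $\uR$.

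For (B), the domain $[u+nb, v+nb]$ is still a compact subinterval of $(0,\infty)$ since $u>0$. The switch numbers of $\uP$ are exactly the switch numbers of $\uR$ shifted by $nb$, and the corresponding switch points are translates $\ua + (b,\dots,b)$ of the switch points $\ua$ of $\uR$. Distinctness of the coordinates is preserved under translation, positivity is preserved since $b \ge 0$ and the original coordinates are positive, and membership in $\delta\bZ^n$ is preserved since $b \in \delta\bZ$.

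For (C), setting $q' = q+nb$ so that $\uP(q') = (b+R_1(q),\dots,b+R_n(q))$, a direct calculation gives
\[
\frac{b+R_j(q)}{q+nb} - \frac{R_j(q)}{q} = \frac{b}{q+nb} - \frac{nb\,R_j(q)}{q(q+nb)}
\]
for each $j$. Since $0 \le R_j(q) \le q$ (the latter because $R_j \le R_1+\cdots+R_n = q$ as all $R_i \ge 0$), the triangle inequality bounds the absolute value by
\[
\frac{b}{q+nb} + \frac{nb}{q+nb} = \frac{(n+1)b}{q+nb} \le \frac{(n+1)b}{u+nb},
\]
using $q \ge u$. Taking the maximum over $j$ yields the claimed $\ell^\infty$-estimate. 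There is no substantive obstacle here; the only thing to be careful about is the bookkeeping in (A) and the correct algebraic identity in (C), with the triangle-inequality split above giving exactly the constant $(n+1)b$ that appears in the statement rather than the slightly sharper $(n-1)b$ one might be tempted to write.
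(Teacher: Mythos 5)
Your proof is correct and matches the paper's essentially step for step. The only organizational difference is in the first half: where you verify (S1)--(S3) and rigidity directly from the definition, the paper simply observes that if $(\ua^{(i)})_{1\le i\le s}$ is the canvas attached to $\uR$, then $(\ub+\ua^{(i)})_{1\le i\le s}$ with $\ub=(b,\dots,b)$ is a canvas of the same mesh and the same transition indices, and $\uP$ is the associated $n$-system; this is a one-line appeal to the machinery built in Section \ref{sec:systems}. Both routes prove the same thing, and yours is slightly more elementary. For the distance estimate your algebraic identity and triangle-inequality split are the same as the paper's (it writes the difference as $(q+nb)^{-1}\ub + ((q+nb)^{-1}-q^{-1})\uR(q)$ and uses $\|\uR(q)\|_\infty\le q$). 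Your closing remark about the constant is a sensible observation: as you note, the per-coordinate difference $b/(q+nb) - nbR_j(q)/(q(q+nb))$ actually lies in $[-(n-1)b/(q+nb),\ b/(q+nb)]$, so the sharper constant $(n-1)b$ is available, but both you and the paper content yourselves with the coarser $(n+1)b$, which is all that is needed downstream.
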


\begin{proof} Put $\ub=(b,\dots,b)\in \Delta_n$,
and let $(\ua^{(i)})_{1\le i\le s}$ be the canvas attached to
$\uR$.  Then $(\ub+\ua^{(i)})_{1\le i\le s}$ is a canvas
of the same mesh $\delta$, with the same pairs of transition
indices, and $\uP$ is the corresponding $n$-system.
This proves the first assertion.  For the second one,
fix $q\in [u,v]$.  Using $\|\uR(q)\|_\infty\le q$, we find that
\[
 \begin{aligned}
 \|(q+nb)^{-1}\uP(q+nb)-q^{-1}\uR(q)\|_\infty
   &= \|(q+nb)^{-1}\ub + ((q+nb)^{-1}-q^{-1})\uR(q)\|_\infty\\
   &\le (q+nb)^{-1}b + |q(q+nb)^{-1}-1|
   = \frac{(n+1)b}{q+nb}.
   \qedhere
 \end{aligned}
\]
\end{proof}

%
%

\section{A refined approximation result}
\label{sec:refined}

Recall that an $n$-system $\uP=(P_1,\dots,P_n)$ is proper
if its first component $P_1$ is unbounded.  This implies
that the domain $I$ of $\uP$ is unbounded and that there are
arbitrarily large values of $q$ in $I$ for which $\uP'(q^+)=\ue_1$
(otherwise $P_1$ would be eventually constant, against the
hypothesis).  Then, we may define a subset $\cF(\uP,\ue_1)$
of $\cF(\uP)$ as follows.

\begin{definition}
\label{refined:def:F(P,e1)}
Let $\uP=(P_1,\dots,P_n)$ be a proper $n$-system, and let
$I\subseteq [0,\infty)$ be its domain. We denote by
$\cF(\uP,\ue_1)$ the set of all points $\ux\in\bR^n$ for which
there exists a strictly increasing unbounded sequence
of positive real numbers $(q_i)_{i\ge 1}$ in $I$ such that
$\uP'(q_i^+)=\ue_1$ for each $i\ge 1$, and $\lim_{i\to\infty}
q_i^{-1}\uP(q_i)=\ux$.
\end{definition}

Equivalently, $\cF(\uP,\ue_1)$ is the set of accumulation points
of the ratios $q^{-1}\uP(q)\in [0,1]^n$ where $q\in I$
with $\uP'(q^+)=\ue_1$. It is not empty since $[0,1]^n$ is
compact.

We can now state the main result of this section
which refines Corollary \ref{approx:cor:lemma:selection} using
the deformation process of Lemma \ref{deformation:lemma:RP}.
For the sake of compactness, we define
\begin{equation}
\label{refined:eq:ue}
 \ue=\ue_1+\cdots+\ue_n=(1,\dots,1).
\end{equation}

\begin{proposition}
\label{refined:prop:ajustement}
Let $\delta>0$, let $\uR$ be a proper rigid $n$-system of
mesh $\delta$ and let $\ux\in\cF(\uR,\ue_1)$.
For any $\epsilon_1>0$ and any $\epsilon_2>0$,
there exist positive multiples $u$ and $v$
of $\delta$ with $u<v$ and a rigid $n$-system
$\uP\colon [u,v]\to\Delta_n$ of mesh
$\delta$ satisfying the following properties:
\begin{itemize}
 \item[(i)] $\uP'(u^+)=\ue_1$,
 \smallskip
 \item[(ii)] $(1+3n\epsilon_1)^{-1}(\ux+\epsilon_1\ue)
   \le u^{-1}\uP(u) \le \ux+4\epsilon_1\ue$,
 \smallskip
 \item[(iii)] $\ux-2\epsilon_2\ue
   \le v^{-1}\uP(v) \le \ux+2\epsilon_2\ue$,
 \medskip
 \item[(iv)] $\dist(\cF(\uR), E) \le 4(n+1)\epsilon_1$
   \ where \  $E=\{q^{-1}\uP(q)\,;\,q\in [u,v]\}$.
\end{itemize}
Moreover, we may take $u$ to be arbitrarily large and,
for a given appropriate choice of $u$, we may take $v$ to be
arbitrarily large.
\end{proposition}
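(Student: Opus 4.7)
The plan is to construct $\uP$ by restricting $\uR$ to a well-chosen interval $[u_0, v_0]$ on which the trajectory $\{q^{-1}\uR(q)\}$ already approximates $\cF(\uR)$ within $\epsilon$, and then applying the vertical-shift deformation of Lemma \ref{deformation:lemma:RP} with a shift $b$ equal to a positive multiple of $\delta$ close to $\epsilon_1 u_0$. The resulting map $\uP(q) = b\ue + \uR(q-nb)$ will be defined on $[u,v] := [u_0+nb, v_0+nb]$ and be rigid of mesh $\delta$ by that lemma, while the extra factor of $n$ on each side of the bounds in (ii)--(iv) will leave enough slack to absorb rounding and approximation errors.

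First, I would fix auxiliary parameters $\eta > 0$ small relative to $\epsilon_1^2$ and $\epsilon > 0$ small relative to $\epsilon_1$. Because $\uR$ is rigid of mesh $\delta$, the multiples of $\delta$ at which $\uR'(\,\cdot\,^+) = \ue_1$ are dense in the set of all $q$ where $\uR'(q^+) = \ue_1$ (each such half-open plateau begins at a multiple of $\delta$ and contains the grid points $v_j, v_j+\delta, v_j+2\delta, \ldots$); combining this with the definition of $\cF(\uR,\ue_1)$ and the first part of Lemma \ref{approx:lemma:selection}, I would select an arbitrarily large multiple $u_0$ of $\delta$ with $\uR'(u_0^+) = \ue_1$, $\|u_0^{-1}\uR(u_0) - \ux\|_\infty \le \eta$, and $\{q^{-1}\uR(q) : q \ge u_0\} \subseteq \cF(\uR) + [-\epsilon,\epsilon]^n$. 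Next, take $b = \lceil \epsilon_1 u_0/\delta\rceil\delta$, so that $\beta := b/u_0 \in [\epsilon_1, \epsilon_1 + \delta/u_0]$, which is as close to $\epsilon_1$ from above as we wish for $u_0$ large. Finally, using the second part of Lemma \ref{approx:lemma:selection} and the fact that $\ux \in \cF(\uR)$, select a sufficiently large multiple $v_0$ of $\delta$ satisfying $\cF(\uR) \subseteq \{q^{-1}\uR(q) : u_0 \le q \le v_0\} + [-\epsilon,\epsilon]^n$, $\|v_0^{-1}\uR(v_0) - \ux\|_\infty \le \epsilon_2$, and $(n+1)b/(v_0 + nb) \le \epsilon_2$.

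Applying Lemma \ref{deformation:lemma:RP} to $\uR|_{[u_0, v_0]}$ and to $b$ produces $\uP$ on $[u, v]$. Property (i) is immediate since $\uP'(u^+) = \uR'(u_0^+) = \ue_1$. Property (ii) follows from the identity $u^{-1}\uP(u) = (\beta\ue + u_0^{-1}\uR(u_0))/(1+n\beta)$ by a direct algebraic check, using $\beta \approx \epsilon_1$ and $\|u_0^{-1}\uR(u_0) - \ux\|_\infty \le \eta$; the slack denominator $1+3n\epsilon_1$ in the lower bound is precisely what accommodates taking $\eta$ of order $\epsilon_1^2$. Property (iii) follows from the triangle inequality together with the bound $\|v^{-1}\uP(v) - v_0^{-1}\uR(v_0)\|_\infty \le (n+1)b/(v_0+nb) \le \epsilon_2$ coming from Lemma \ref{deformation:lemma:RP} and the choice of $v_0$. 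For property (iv), Corollary \ref{approx:cor:lemma:selection} gives $\dist(\cF(\uR), E_0) \le \epsilon$ for $E_0 = \{t^{-1}\uR(t) : t \in [u_0, v_0]\}$, while the same pointwise estimate from Lemma \ref{deformation:lemma:RP} yields $\dist(E_0, E) \le (n+1)b/(u_0+nb) \le (n+1)\epsilon_1$; summing and taking $\epsilon \le 3(n+1)\epsilon_1$ gives the announced bound $4(n+1)\epsilon_1$.

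The arbitrariness claims are immediate from the construction: $u_0$ may be chosen as large as we wish, hence so may $u = u_0+nb$; and for fixed $u_0$ (hence fixed $b$ and $u$) we may still choose $v_0$ arbitrarily large. The main obstacle I expect is the bookkeeping in (ii), where one must simultaneously control the rounding error $b - \epsilon_1 u_0 < \delta$, the approximation error $\eta$ on the initial ratio, and the fixed ratio $\beta \approx \epsilon_1$ so that $u^{-1}\uP(u)$ fits inside the asymmetric window $[(1+3n\epsilon_1)^{-1}(\ux+\epsilon_1\ue),\ \ux+4\epsilon_1\ue]$; this forces $\eta$ to be taken of order $\epsilon_1^2$, which is harmless since $u_0$ (and hence the quality of the approximation $u_0^{-1}\uR(u_0) \approx \ux$) can be made as good as needed.
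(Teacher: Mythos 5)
Your proof follows essentially the same route as the paper's: choose $u_0$ (with $\uR'(u_0^+)=\ue_1$ and $u_0^{-1}\uR(u_0)$ near $\ux$) and $v_0$ via $\cF(\uR,\ue_1)$ and Corollary \ref{approx:cor:lemma:selection}, then apply the vertical shift of Lemma \ref{deformation:lemma:RP} and bound the resulting distances. The only deviation is a parameter choice --- you take $b\approx\epsilon_1 u_0$ with initial error $\eta=O(\epsilon_1^2)$, where the paper takes $b\approx 2\epsilon_1 u_0$ with initial error $\le\epsilon_1$ --- and the algebra you sketch for condition (ii) does close.
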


\begin{proof}
Fix a choice of $\epsilon_1,\epsilon_2>0$.  By definition of
$\cF(\uR,\ue_1)$, there exist arbitrarily large multiples
$u_0$ and $v_0$ of $\delta$ in the domain of $\uR$
satisfying
\begin{equation}
 \label{refined:prop:ajustement:eq1}
 \uR'(u_0^+)=\ue_1,
 \quad
 \|u_0^{-1}\uR(u_0)-\ux\|_\infty \le \epsilon_1
 \et
 \|v_0^{-1}\uR(v_0)-\ux\|_\infty \le \epsilon_2.
\end{equation}
By Corollary \ref{approx:cor:lemma:selection}, we can
choose them so that $u_0<v_0$ and
\begin{equation}
 \label{refined:prop:ajustement:eq2}
 \dist(\cF(\uR),F) \le \epsilon_1
 \quad\text{where}\quad
 F=\{q^{-1}\uR(q)\,;\, u_0\le q\le v_0\}.
\end{equation}
More precisely, we can take for $u_0$ any sufficiently
large multiple of $\delta$ satisfying the first two
conditions in \eqref{refined:prop:ajustement:eq1} and, once
$u_0$ is fixed, we can take for $v_0$ any sufficiently
large multiple of $\delta$ satisfying the last
condition in \eqref{refined:prop:ajustement:eq1}.  Put
\[
 b=\lceil 2\epsilon_1 u_0\delta^{-1}\rceil\delta,
 \quad
 u=u_0+nb
 \et
 v=v_0+nb,
\]
and consider the map $\uP\colon [u,v]\to\Delta_n$ given by
$\uP(q)=\uR(q-nb)+b\/\ue$ for each $q\in[u,v]$. According
to Lemma \ref{deformation:lemma:RP}, this is a rigid
$n$-system of mesh $\delta$ such that
\begin{equation}
 \label{refined:prop:ajustement:eq3}
 \dist(F,E)\le \frac{(n+1)b}{u_0+nb}
 \quad\text{where}\quad
 E=\{q^{-1}\uP(q)\,;\,q\in [u,v]\}.
\end{equation}
Assuming $u_0$ large enough, we have $2\epsilon_1u_0\le b\le
3\epsilon_1u_0$.  Since $\|u_0^{-1}\uR(u_0)-\ux\|_\infty
\le \epsilon_1$, this yields
\begin{align*}
 u^{-1}\uP(u)
 &\ge(u_0+3n\epsilon_1u_0)^{-1}(\uR(u_0)+2\epsilon_1u_0\ue)
 \ge (1+3n\epsilon_1)^{-1}(\ux+\epsilon_1\ue), \\
 u^{-1}\uP(u)
 &\le u_0^{-1}(\uR(u_0)+3\epsilon_1u_0\ue)
 \le \ux+4\epsilon_1\ue.
\end{align*}
Thus, $\uP$ satisfies condition (ii).  It also satisfies
condition (i) since $\uP'(u^+)=\uR'(u_0^+)=\ue_1$.
Assuming $v_0/u_0$ large enough, we also find that
\[
 \|v^{-1}\uP(v)-v_0^{-1}\uR(v_0)\|_\infty
 = \|(v_0+nb)^{-1}(\uR(v_0)+b\/\ue)-v_0^{-1}\uR(v_0)\|_\infty
 \le \epsilon_2.
\]
Then condition (iii) follows as well,
using \eqref{refined:prop:ajustement:eq1}.
Finally, \eqref{refined:prop:ajustement:eq2}
and \eqref{refined:prop:ajustement:eq3} yield
\[
 \dist(\cF(\uR),E)
   \le \epsilon_1+\frac{(n+1)b}{u_0+nb}
   \le \epsilon_1+\frac{3(n+1)\epsilon_1u_0}{u_0}
   \le 4(n+1)\epsilon_1
\]
as requested in condition (iv).  By varying $u_0$, we can make $u$
arbitrarily large and, for a fixed $u_0$, we can make $v$ arbitrarily
large by varying $v_0$.
\end{proof}

%
%

\section{Approximation by self-similar systems}
\label{sec:self}

The next result corroborates the importance of self-similar
systems.  We use it below together with Corollary
\ref{spectra:cor:prop:P^nu} to conclude that any spectrum
of exponents of approximation (in the sense of
Definition \ref{intro:def:image_muT}) is connected.

\begin{theorem}
\label{self:thm}
Let $\uR$ be a proper $n$-system.  For any
$\delta>0$ and $\epsilon>0$,
there is a self-similar rigid $n$-system
$\uS$ of mesh $\delta$ with
$\dist\big(\cF(\uR), \cF(\uS)\big) \le \epsilon$.
\end{theorem}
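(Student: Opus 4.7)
The plan is to construct $\uS$ by excising a finite piece of $\uR$ over an interval $[u,v]$ that already captures $\cF(\uR)$ to precision $\epsilon$, deforming its right end so that it satisfies a homothety relation with its left end, and then extending by self-similarity. First I may assume that $\uR$ is itself rigid of mesh $\delta$ by Theorem \ref{systems:thm:approxPbyRigid}, since a bounded perturbation leaves $\cF(\uR)$ unchanged. Since $\uR$ is proper, $\cF(\uR,\ue_1)$ is non-empty; fix any $\ux\in\cF(\uR,\ue_1)$ and auxiliary parameters $\epsilon_1>0$ and $\epsilon_2\in(0,\epsilon_1/2]$ to be tuned at the end. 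Applying Proposition \ref{refined:prop:ajustement}, I obtain a rigid $n$-system $\uP\colon[u,v]\to\Delta_n$ of mesh $\delta$ with $\uP'(u^+)=\ue_1$, the sharp two-sided bounds (ii), (iii) on $u^{-1}\uP(u)$ and $v^{-1}\uP(v)$ respectively, and $\dist(\cF(\uR),E)\le 4(n+1)\epsilon_1$, where $E=\{q^{-1}\uP(q):q\in[u,v]\}$. Using the freedom granted by that proposition, I also arrange $v>nu$ and $u/v\le\epsilon_1$.

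Next I adjust the right endpoint to make $\uP(\cdot)$ end at a scalar multiple of its starting value. Let $\rho$ be the least positive integer with $\rho\,\uP(u)_j\ge\uP(v)_j$ for $j=1,\dots,n$, and set $\uc=\rho\,\uP(u)$. Because $\uP(u)$ is a switch point of a rigid $n$-system of mesh $\delta$, its coordinates are strictly increasing positive multiples of $\delta$, and the same holds for the coordinates of $\uc$. Combining the bounds (ii) and (iii) with $\epsilon_2\le\epsilon_1/2$ gives $\uP(v)_j/\uP(u)_j\le(v/u)(1+3n\epsilon_1)$ for each $j$, hence $\rho\le(v/u)(1+3n\epsilon_1)+1$; together with $u/v\le\epsilon_1$ this yields $\rho u\le v(1+(3n+1)\epsilon_1)$. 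In particular $\rho\ge v/u>n\ge 2$, and a direct computation using (ii)–(iii) gives
\[
 c_j-\uP(v)_j \;=\; \rho\,\uP(u)_j-\uP(v)_j \;\le\; \epsilon' v
 \quad\text{with}\quad \epsilon'=O(\epsilon_1).
\]
Corollary \ref{deformation:cor:prop:extension}, whose hypothesis $v>nu$ is in force, then produces a rigid $n$-system $\tuP\colon[u,w]\to\Delta_n$ of mesh $\delta$ with $w=c_1+\cdots+c_n=\rho u$ and $\tuP(w)=\uc=\rho\,\uP(u)$. Moreover $\tuP=\uP$ on $[u,v/n)$, so $\tuP(u)=\uP(u)$, $\tuP'(u^+)=\ue_1$, and $\dist(E,\tE)\le n(n+1)\epsilon'$, where $\tE=\{q^{-1}\tuP(q):q\in[u,w]\}$.

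Finally I pass to the self-similar extension. Because $\tuP(w)=\rho\,\tuP(u)$ and the transition indices $k=1$ at $u$ and $\ell$ at $w$ satisfy the gluing inequality $k\le\ell$ trivially, the construction recalled in Section \ref{sec:systems} produces a self-similar $n$-system $\uS\colon[u,\infty)\to\Delta_n$ with $\uS(\rho q)=\rho\uS(q)$ extending $\tuP$. Since $\rho$ is a positive integer and all switch points of $\tuP$ lie in $\delta\bZ^n$, the same is true for the switch points $\rho^i\,\tuP(q_j)$ of $\uS$, so $\uS$ is rigid of mesh $\delta$; it is proper because $\uS(u)_1=\uP(u)_1\ge\delta>0$. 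The relation $(\rho q)^{-1}\uS(\rho q)=q^{-1}\uS(q)$ forces the image of $q\mapsto q^{-1}\uS(q)$ on $[u,\infty)$ to coincide with its image on one period $[u,w]$, namely $\tE$, and every value is hit along arbitrarily large $q$, so $\cF(\uS)=\tE$. The triangle inequality then gives
\[
 \dist(\cF(\uR),\cF(\uS))
  \le \dist(\cF(\uR),E)+\dist(E,\tE)
  \le 4(n+1)\epsilon_1+n(n+1)\epsilon' = O(\epsilon_1),
\]
so taking $\epsilon_1$ small enough in terms of $\epsilon$ and $n$ completes the proof. The main technical obstacle is that rigidity of mesh $\delta$ for the self-similar extension forces the homothety factor $\rho$ to be an integer; the resulting rounding of $v/u$ to $\lceil v/u(1+3n\epsilon_1)\rceil$ is controlled precisely because Proposition \ref{refined:prop:ajustement} lets me take $v/u$ as large as needed.
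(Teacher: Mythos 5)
Your proposal is correct and follows essentially the same strategy as the paper: reduce to a rigid $\uR$, extract a finite piece $\uP$ on $[u,v]$ via Proposition \ref{refined:prop:ajustement} that approximates $\cF(\uR)$, deform its right end with Corollary \ref{deformation:cor:prop:extension} so that $\tuP(w)$ is an integer multiple of $\tuP(u)$, and then glue self-similarly; the only cosmetic difference is that you define the integer multiplier $\rho$ as the minimal one satisfying $\rho\,\uP(u)\ge\uP(v)$ coordinate-wise and then bound it, whereas the paper directly sets $m=\lceil(1+3n\epsilon_1)v u^{-1}\rceil$, but both lead to the same $O(\epsilon_1)$ control on $\dist(E,\tE)$.
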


\begin{proof}
Fix $\delta>0$ and $\epsilon>0$.
By Theorem \ref{systems:thm:approxPbyRigid}, we may assume
that $\uR$ is rigid of mesh $\delta$.  Choose
$\ux\in\cF(\uR,\ue_1)$ and consider the $n$-system
$\uP\colon [u,v]\to\Delta_n$ given
by Proposition \ref{refined:prop:ajustement}
for the choice of $\epsilon_1=\min\{1,\epsilon\}/(16n^2(n+1))$ and
$\epsilon_2=\epsilon_1/2$.  We take $v/u$ large enough
with $v/u>n$, so that the integer
$m=\lceil(1+3n\epsilon_1)vu^{-1}\rceil$ satisfies
$m\le (1+4n\epsilon_1)vu^{-1}$.  This yields
\begin{align*}
m\uP(u)
  &\ge (1+3n\epsilon_1)vu^{-1}\uP(u)
   \ge v(\ux+\epsilon_1\ue)
    = v(\ux+2\epsilon_2\ue)
   \ge \uP(v),\\
m\uP(u)
  &\le mu(\ux+4\epsilon_1\ue)
   \le v(1+4n\epsilon_1)(\ux+4\epsilon_1\ue)\\
  &\le v(\ux+4\epsilon_1\ue+4n\epsilon_1(1+4\epsilon_1)\ue)
   \le v(\ux+4\epsilon_1\ue+5n\epsilon_1\ue)
   \le \uP(v) + 8n\epsilon_1v\ue,
\end{align*}
using $\ux\le\ue$, $\epsilon_1\le 1/16$ and $n\ge 2$,
where $\ue$ is given by \eqref{refined:eq:ue}.
Then, Corollary \ref{deformation:cor:prop:extension}
applies to $\uP$ for the choice of $\uc=m\uP(u)$.
It provides a rigid $n$-system $\tuP\colon [u,mu] \to \Delta_n$
of mesh $\delta$ such that
\begin{flalign}
 \label{self:thm:eq7}
 \quad
 &\tuP(u)=\uP(u) \et
   \tuP'(u^+)=\uP'(u^+)=\ue_1, &\\
 \label{self:thm:eq8}
 &\tuP(mu) = m\uP(u),\\
 \label{self:thm:eq9}
 &\dist(E,\tE)\le 8n^2(n+1)\epsilon_1\le \epsilon/2,
\end{flalign}
where $E=\{q^{-1}\uP(q)\,;\, u\le q\le v\}$
and $\tE=\{q^{-1}\tuP(q)\,;\, u\le q\le mu\}$.
By \eqref{self:thm:eq7} and \eqref{self:thm:eq8},
the map $\tuP$ extends to a self-similar rigid $n$-system
$\uS\colon[u,\infty) \to\Delta_n$ of mesh $\delta$ satisfying
$\uS(mq)=m\uS(q)$ for each $q\ge u$, and thus
$\cF(\uS)=\tE$.  Since we have $\dist(\cF(\uR),E)
\le 4(n+1)\epsilon_1$ by the choice of $\uP$, we conclude
from \eqref{self:thm:eq9} that
$\dist\big(\cF(\uR),\cF(\uS)\big)\le 4(n+1)\epsilon_1+\epsilon/2
\le \epsilon$.
\end{proof}

\begin{corollary}
\label{self:cor}
Let $T\colon\bR^n\to\bR^m$ be a linear map, and let
$\cS_1$ be the set of all points $\mu_T(\uS)$
where $\uS$ is a self-similar integral $n$-system. Then,
$\cS_1$ is a dense subset of the spectrum $\image^*(\mu_T)$
and that spectrum is a connected subset of $\bR^m$.
\end{corollary}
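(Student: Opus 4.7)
The plan is to combine Theorem~\ref{self:thm} (density of self-similar approximations) with Corollary~\ref{spectra:cor:prop:P^nu} (every self-similar system contributes to one fixed component) by means of a soft topological fact: if a dense subset of a space is contained in a single connected component, then the space coincides with that component and is therefore connected.

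First I would establish the density of $\cS_1$ in $\image^*(\mu_T)$. Start from an arbitrary point of $\image^*(\mu_T)$, which by Theorem~\ref{spectra:thm:spectrumP} may be written as $\mu_T(\uR)$ for some proper \emph{integral} $n$-system $\uR$. Given $\epsilon>0$, Theorem~\ref{self:thm} (with $\delta=1$) supplies a self-similar rigid integral $n$-system $\uS$ with $\dist(\cF(\uR),\cF(\uS))\le\epsilon$. Inspection of the construction in the proof of Theorem~\ref{self:thm} shows that $\uS$ is automatically proper: one has $\uS(mq)=m\,\uS(q)$ for some integer $m>1$, and the first coordinate of $\uS$ at its base point is strictly positive because property~(ii) of Proposition~\ref{refined:prop:ajustement} forces the first coordinate of $u^{-1}\uP(u)$ to be at least $(1+3n\epsilon_1)^{-1}\epsilon_1>0$. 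Iterating the homothety then gives $S_1(m^k q)\to\infty$, so $\uS$ is proper and hence $\mu_T(\uS)\in\cS_1\subseteq\image^*(\mu_T)$. Finally, since $\mu_T(\uQ)=\inf T(\cF(\uQ))$ and each linear form $T_i$ is bounded on compact sets, one gets a uniform estimate of the form $\|\mu_T(\uR)-\mu_T(\uS)\|_\infty\le C\epsilon$ with $C$ depending only on $T$ and $n$. Letting $\epsilon\to 0$ proves that $\cS_1$ is dense in $\image^*(\mu_T)$.

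For connectedness, Corollary~\ref{spectra:cor:prop:P^nu} tells us that every point $\mu_T(\uS)\in\cS_1$ lies in the connected component $C$ of $n^{-1}T(\ue)$ in $\image^*(\mu_T)$, with $\ue=(1,\dots,1)$. Therefore $\cS_1\subseteq C$. Since connected components of any topological space are closed, $C$ is closed in $\image^*(\mu_T)$, and the density of $\cS_1$ then gives
\[
 \image^*(\mu_T)=\overline{\cS_1}^{\,\image^*(\mu_T)}\subseteq C,
\]
so that $\image^*(\mu_T)=C$ is connected.

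All the real work is absorbed into Theorem~\ref{self:thm} and Corollary~\ref{spectra:cor:prop:P^nu}; the corollary itself is essentially a formal assembly of these two ingredients. The one genuinely careful point is checking that the self-similar rigid $n$-system produced by Theorem~\ref{self:thm} is \emph{proper}, because only then does $\mu_T(\uS)$ lie in $\image^*(\mu_T)$ and only then can Corollary~\ref{spectra:cor:prop:P^nu} be applied to it. Once properness is confirmed, the Hausdorff continuity of $\uQ\mapsto\mu_T(\uQ)$ and the closedness of connected components complete the argument without further issues.
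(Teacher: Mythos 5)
Your proof is correct and follows essentially the same route as the paper: use Theorem~\ref{self:thm} to show that $\cS_1$ is dense, combine it with Corollary~\ref{spectra:cor:prop:P^nu} to locate $\cS_1$ inside a single connected component, and conclude connectedness because connected components are closed. The only addition is your explicit check that the system $\uS$ from Theorem~\ref{self:thm} is proper (the paper takes this for granted), which is a worthwhile observation though it follows quickly from Proposition~\ref{refined:prop:ajustement}(ii) exactly as you say.
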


Recall that an integral $n$-system is a
rigid $n$-system of mesh $1$.

\begin{proof}
Choose $C>0$ such that $\|T(\ux)\|_\infty\le C \|\ux\|_\infty$
for each $\ux\in\bR^n$.  For any proper
$n$-system $\uR$ and any $\epsilon>0$, the above theorem
provides a self-similar integral $n$-system $\uS$ with
$\dist(\cF(\uR),\cF(\uS))\le C^{-1}\epsilon$.  For this choice
of $\uS$, we have $\dist(T(\cF(\uR)),T(\cF(\uS)))\le \epsilon$
and so $\|\mu_T(\uR)-\mu_T(\uS)\|_\infty \le \epsilon$.
Thus $\cS_1$ is dense in $\image^*(\mu_T)$.
Since Corollary \ref{spectra:cor:prop:P^nu} shows that $\cS_1$
is contained in a single connected component of $\image^*(\mu_T)$,
the latter must be connected.
\end{proof}

%
%

\section{Compactness of the spectra}
\label{sec:composite}

In this section, we complete the proof of
Theorem \ref{intro:thm:muT} by showing that any spectrum
of exponents of approximation
(as in Definition \ref{intro:def:image_muT})
is compact. We first establish a general result, which
uses the following notation.

\begin{definition}
\label{composite:def}
Let $(F_i)_{i\ge 1}$ be a sequence of non-empty
subsets of $\bR^n$.
We denote by $\liminf_{i\to\infty} F_i$ the set of
all points in $\bR^n$ which can be written
as the limit of a sequence $(\ux_i)_{i\ge 1}$ in
$\prod_{i\ge 1}F_i$, and we denote by
$\limsup_{i\to\infty} F_i$ the set of all points
in $\bR^n$ which are an accumulation point of such a sequence
$(\ux_i)_{i\ge 1}$, that is, the limit of a subsequence
of $(\ux_i)_{i\ge 1}$.
\end{definition}

\begin{theorem}
\label{composite:thm}
Let $(\uR^{(i)})_{i\ge 1}$ be a sequence of proper rigid
$n$-systems of the same mesh $\delta$ for some $\delta>0$.
Suppose that there exists a convergent sequence
$(\ux^{(i)})_{i\ge 1}$ in $\prod_{i\ge 1}\cF(\uR^{(i)},\ue_1)$.
Then, there is a proper rigid $n$-system
$\uR$ of mesh $\delta$ such that
\begin{equation*}
\cF(\uR) = \limsup_{i\to\infty} \cF(\uR^{(i)}).
\end{equation*}
\end{theorem}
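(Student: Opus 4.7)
The strategy is to build $\uR$ by concatenating finite rigid pieces extracted from each $\uR^{(i)}$, glued together using the deformation tools of Section~\ref{sec:deformation}. The three main ingredients are Proposition~\ref{refined:prop:ajustement}, which produces for each $i$ a rigid approximant $\uP^{(i)}\colon[u_i,v_i]\to\Delta_n$ of $\cF(\uR^{(i)})$ with $\uP^{(i)\prime}(u_i^+)=\ue_1$ and with both $u_i^{-1}\uP^{(i)}(u_i)$ and $v_i^{-1}\uP^{(i)}(v_i)$ close to $\ux^{(i)}$; Lemma~\ref{deformation:lemma:RP}, which vertically shifts a rigid $n$-system by $b\ue$; and Corollary~\ref{deformation:cor:prop:extension}, which extends a rigid $n$-system past its endpoint to reach any component-wise larger target with a distortion bound proportional to the relative stretch.

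Inductively, suppose $\uR$ has been built on $[U_1,V_i]$ with $\uR(V_i)=\uP^{(i)}(v_i)+B_i\ue$ for some cumulative shift $B_i\in\delta\bN$. I choose a valid starting position $u_{i+1}$ for $\uP^{(i+1)}$ (allowed to be arbitrarily large by the last sentence of Proposition~\ref{refined:prop:ajustement}) and a shift $B_{i+1}'\in\delta\bN$ large enough that the target $\uc:=\uP^{(i+1)}(u_{i+1})+B_{i+1}'\ue$ strictly dominates $\uR(V_i)$ component-wise. Lemma~\ref{deformation:lemma:RP} shifts $\uP^{(i+1)}$ vertically by $B_{i+1}'\ue$, and Corollary~\ref{deformation:cor:prop:extension} extends $\uR$ from $V_i$ up to position $U_{i+1}:=u_{i+1}+nB_{i+1}'$, reaching exactly $\uc$ (a direct summation shows that the total length increment $\sum_j(c_j-P_j(V_i))$ equals $U_{i+1}-V_i$). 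The gluing condition $k\le\ell$ from Section~\ref{sec:systems} holds automatically since the shifted $\uP^{(i+1)}$ has right derivative $\ue_1$ at $U_{i+1}$, so $k=1\le\ell$ regardless of the left derivative $\ue_\ell$ delivered by the extension. Appending the shifted $\uP^{(i+1)}$ on $[U_{i+1},V_{i+1}]$ with $V_{i+1}:=v_{i+1}+nB_{i+1}'$ yields a rigid $n$-system of mesh $\delta$ on $[U_1,V_{i+1}]$, and in the limit an $n$-system $\uR$ on $[U_1,\infty)$; propriety of $\uR$ follows because the strict increase $B_{i+1}'>B_i$ at each stage forces $B_i\to\infty$, hence $\uR_1\ge B_i$ is unbounded.

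To verify $\cF(\uR)=\limsup_{i\to\infty}\cF(\uR^{(i)})$, I choose the error parameters $\epsilon_{1,i},\epsilon_{2,i}$ in each application of Proposition~\ref{refined:prop:ajustement} together with the shifts $B_{i+1}'$ and extension stretches so that the cumulative distortion $\theta_i$ between ratios $q^{-1}\uR(q)$ on the $i$-th main piece and points of $\cF(\uR^{(i)})$ decays to zero. The inclusion $\limsup_{i\to\infty}\cF(\uR^{(i)})\subseteq\cF(\uR)$ then follows from property (iv) of Proposition~\ref{refined:prop:ajustement}: given $\uy=\lim_k\uy^{(i_k)}$ with $\uy^{(i_k)}\in\cF(\uR^{(i_k)})$, one selects $q_k$ in the $i_k$-th main piece with $\|q_k^{-1}\uR(q_k)-\uy^{(i_k)}\|_\infty\to 0$, and $q_k\to\infty$ since successive pieces are placed at increasing positions. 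Conversely, any $\uy\in\cF(\uR)$ is a limit of $q_k^{-1}\uR(q_k)$ with $q_k\to\infty$; each $q_k$ lies in the $i(k)$-th main piece or gluing extension, with $i(k)\to\infty$, and the distortion bound places $\uy$ as an accumulation point of $\bigcup_i\cF(\uR^{(i)})$.

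The main obstacle is the parameter bookkeeping. At each stage one must simultaneously satisfy all applicability hypotheses---notably $v_i>nu_i$ for Corollary~\ref{deformation:cor:prop:extension}, integrality of $B_{i+1}'$ as a multiple of $\delta$, and component-wise domination of the extension target---while balancing three interlocking error sources: the approximation error $\eta_i$ from Proposition~\ref{refined:prop:ajustement}, the shift distortion $(n+1)B_{i+1}'/U_{i+1}$ from Lemma~\ref{deformation:lemma:RP}, and the extension distortion $n(n+1)\epsilon_i$ from Corollary~\ref{deformation:cor:prop:extension}. All three must decay to zero as $i\to\infty$. The hypothesis that $(\ux^{(i)})$ converges is essential at precisely this point: it guarantees that consecutive pieces are close in direction, so that the shift $B_{i+1}'-B_i$ needed for component-wise domination can be chosen small relative to $V_i$, keeping the extension stretch negligible; without such convergence, one cannot ensure that the gluing extensions do not themselves contribute spurious accumulation points to $\cF(\uR)$.
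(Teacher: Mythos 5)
Your high-level strategy --- extract compact pieces from each $\uR^{(i)}$ via Proposition~\ref{refined:prop:ajustement} with matching directions at both ends, then align, extend, and glue using the deformation tools of Section~\ref{sec:deformation} --- is exactly the paper's, and most of the verification logic (the two inclusions for $\limsup$, propriety, the gluing condition $k=1\le\ell$) is handled correctly.

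The genuine divergence is in the \emph{alignment operation}. The paper rescales the $i$-th piece by an integer $m_i$, so that $\tuP^{(i)}(q)=m_i\uP^{(i)}(m_i^{-1}q)$, and chooses the $m_i$ so that $m_{i+1}u_{i+1}$ exceeds $m_iv_i$ by only a factor $1+O(n\epsilon_i)$; then Corollary~\ref{deformation:cor:prop:extension} is applied to \emph{each rescaled piece individually}. This choice is strategic: rescaling is exactly distortion-free on the ratios $q^{-1}\uP(q)$, so the only distortion comes from the extension, and it is $O(\epsilon'_i)\to 0$ with no compounding whatsoever. You instead carry a cumulative additive shift $B_i\ue$ (Lemma~\ref{deformation:lemma:RP}), which by that lemma distorts ratios by $(n+1)B_i/(u_i+nB_i)$. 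Since $B_i$ is strictly increasing (you even force $B_{i+1}'>B_i$ for propriety) and is built up as a sum of positive increments, you must separately guarantee $B_i/u_i\to 0$; you flag this as ``the main obstacle'' but do not actually verify that a consistent choice of $u_i\ll v_i\ll u_{i+1}$, $\epsilon_i\downarrow 0$, and $B_{i+1}'-B_i$ exists. It does --- one can take $v_i/u_i\to\infty$ fast enough that $B_i/v_{i-1}\to 0$ --- but this is precisely the kind of compounding error that the paper's multiplicative choice eliminates at the source.

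A second, smaller point: you apply Corollary~\ref{deformation:cor:prop:extension} to the whole of $\uR$ on $[U_1,V_i]$ rather than to the last piece. The distortion estimate there is stated as $\dist(E,\tE)\le n(n+1)\epsilon$ where $E,\tE$ range over the full domains, and this by itself is too weak: it only says the new gluing ratios are close to \emph{some} historical ratio, not necessarily to $\cF(\uR^{(i)})$. What saves this is buried in the proof of the corollary: for $q\in[v/n,w]$ one has $0\le q-A(q)\le n^2\epsilon q$, so $A(q)\ge(1-n^2\epsilon)q$, meaning the new ratios are close to ratios from a thin slice just below $V_i$ --- i.e.\ the last piece, provided $\epsilon$ is small compared with $(v_i-u_i)/V_i$. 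That localization is what you actually need, and it should be made explicit; it is exactly why the paper prefers to extend each piece in isolation, where the corollary applies verbatim with no additional argument.

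So: same skeleton, same three tools, but a different alignment mechanism (additive shift vs.\ multiplicative rescaling) that is strictly harder to keep under control, and one place where the cited corollary is applied in a form weaker than what is needed. The approach can be made rigorous, but as written the key quantitative steps are asserted rather than verified.
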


\begin{proof}
Choose a sequence of positive real numbers $(\epsilon_i)_{i\ge 0}$
such that $\lim_{i\to\infty}\epsilon_i=0$ and
$\epsilon_i\ge \|\ux^{(i)}-\ux^{(i+1)}\|_\infty$ for each
$i\ge 1$.  We first apply Proposition \ref{refined:prop:ajustement}
to construct, for each $i\ge 1$, positive multiples $u_i$ and $v_i$
of $\delta$ with $nu_i<v_i$ and a rigid $n$-system $\uP^{(i)}\colon
[u_i,v_i]\to\Delta_n$ of mesh $\delta$ which satisfy the following
four conditions:
\begin{flalign}
\label{composite:thm:eq1}
 \quad
 &(\uP^{(i)})'(u_i^+)=\ue_1, &\\
\label{composite:thm:eq2}
 &(1+9n\epsilon_{i-1})^{-1}(\ux^{(i)}+3\epsilon_{i-1}\ue)
  \le u_i^{-1}\uP^{(i)}(u_i)
  \le \ux^{(i)}+12\epsilon_{i-1}\ue,\\
\label{composite:thm:eq3}
 &\ux^{(i)}-2\epsilon_i\ue
  \le v_i^{-1}\uP^{(i)}(v_i)
  \le \ux^{(i)}+2\epsilon_i\ue,\\
\label{composite:thm:eq4}
 &\dist(\cF(\uR^{(i)}), E^{(i)}) \le 12(n+1)\epsilon_{i-1}
 \quad \text{where} \quad
 E^{(i)}=\{q^{-1}\uP^{(i)}(q)\,;\,q\in [u_i,v_i]\},
\end{flalign}
with $\ue$ given by \eqref{refined:eq:ue}.
More precisely, we start by making appropriate choices of $u_1,u_2,\dots$
Then, for each $i\ge 1$, we select $v_i$ large enough
compared to $u_{i+1}$ (with $v_i>nu_i$) so that the integers
defined recursively by $m_1=1$ and
$m_{i+1}=\left\lceil (1+9n\epsilon_i)m_iv_iu_{i+1}^{-1}\right\rceil$
for $i\ge 1$, satisfy
\begin{equation}
 \label{composite:thm:eq5}
 (1+9n\epsilon_i)m_iv_i \le m_{i+1}u_{i+1}\le (1+10n\epsilon_i)m_iv_i
 \quad (i\ge 1).
\end{equation}
Since $\ux^{(i+1)}-\epsilon_i\ue\le \ux^{(i)}\le \ux^{(i+1)}+\epsilon_i\ue$,
we deduce from \eqref{composite:thm:eq3}, \eqref{composite:thm:eq4}
and \eqref{composite:thm:eq5} that
\begin{align*}
 &m_iv_i(\ux^{(i+1)}-3\epsilon_i\ue)
  \le m_i\uP^{(i)}(v_i)
  \le m_iv_i(\ux^{(i+1)}+3\epsilon_i\ue),\\
 &m_iv_i(\ux^{(i+1)}+3\epsilon_i\ue)
  \le m_{i+1}\uP^{(i+1)}(u_{i+1})
  \le (1+10n\epsilon_i)m_iv_i(\ux^{(i+1)}+12\epsilon_i\ue),
\end{align*}
and, therefore, using $\ux^{(i+1)}\le \ue$, we conclude that
\begin{equation}
 \label{composite:thm:eq6}
 m_i\uP^{(i)}(v_i)
   \le m_{i+1}\uP^{(i+1)}(u_{i+1})
   \le m_i\uP^{(i)}(v_i)+\epsilon'_im_iv_i\ue,
\end{equation}
where $\epsilon'_i=\epsilon_i(10n+15+150n\epsilon_i)$.  For
each $i\ge 1$, define
\[
 \tu_i=m_iu_i
 \et
 \tv_i=m_iv_i.
\]
By \eqref{composite:thm:eq6} and the fact that $\tv_i>n\tu_i$,
Corollary \ref{deformation:cor:prop:extension}
applies to the rescaled rigid $n$-system
$\tuP^{(i)}\colon [\tu_i,\tv_i]\to\Delta_n$
of mesh $\delta$ given by
\[
 \tuP^{(i)}(q)=m_i\uP^{(i)}(m_i^{-1}q)
 \quad (q\in[\tu_i,\tv_i]),
\]
with the choice of $\uc=m_{i+1}\uP^{(i+1)}(u_{i+1})$.
It provides a rigid $n$-system $\tuR^{(i)}$ of mesh $\delta$
on $[\tu_i,\tu_{i+1}]$ such that
\begin{flalign}
 \label{composite:thm:eq7}
 \quad
 &\tuR^{(i)}(\tu_i)=m_i\uP^{(i)}(u_i)
  \et
  \big(\tuR^{(i)}\big)'(\tu_i^+)
     =\big(\uP^{(i)}\big)'(u_i^+)=\ue_1, &\\
 \label{composite:thm:eq8}
 &\tuR^{(i)}(\tu_{i+1}) = m_{i+1}\uP^{(i+1)}(u_{i+1}),\\
 \label{composite:thm:eq9}
 &\dist(E^{(i)},\tE^{(i)})\le n(n+1)\epsilon'_i
  \quad \text{where} \quad
  \tE^{(i)}=\{q^{-1}\tuR^{(i)}(q)\,;\, q\in[\tu_i,\tu_{i+1}]\},
\end{flalign}
the last property using the fact that
$\{q^{-1}\tuP^{(i)}(q)\,;\, q\in[\tu_i,\tv_i]\}=E^{(i)}$
since $\tuP^{(i)}$ is obtained from $\uP^{(i)}$ by rescaling.
By \eqref{composite:thm:eq7} and \eqref{composite:thm:eq8},
the $n$-systems $\tuR^{(1)},\tuR^{(2)},\dots$ can be pasted
together to produce a rigid $n$-system $\uR$ of mesh $\delta$
on $[\tu_1,\infty)$ whose restriction to $[\tu_i,\tu_{i+1}]$
is $\tuR^{(i)}$ for each $i\ge 1$.  By \eqref{composite:thm:eq4}
and \eqref{composite:thm:eq9}, it satisfies
\[
 \cF(\uR)
   = \limsup_{i\to \infty} \tE^{(i)}
   = \limsup_{i\to \infty}   E^{(i)}
   = \limsup_{i\to \infty}  \cF\big(\uR^{(i)}\big).
\qedhere
\]
\end{proof}

We note a first immediate consequence.

\begin{corollary}
\label{composite:cor1}
Let $\uR^{(1)}$ and $\uR^{(2)}$ be proper rigid $n$-systems
of the same mesh $\delta$.  Suppose that $\cF(\uR^{(1)},\ue_1)$
and $\cF(\uR^{(2)},\ue_1)$ have at least one point $\ux$ in
common. Then there exists a proper rigid $n$-system
$\uR$ of mesh $\delta$ such that
$\cF(\uR)=\cF(\uR^{(1)})\cup\cF(\uR^{(2)})$.
\end{corollary}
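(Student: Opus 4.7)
The plan is to apply Theorem~\ref{composite:thm} directly, using a sequence of rigid $n$-systems that alternates between $\uR^{(1)}$ and $\uR^{(2)}$, and exploiting the common point $\ux$ to furnish the required convergent sequence in the product of the sets $\cF(\uR^{(i)},\ue_1)$.

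First, I would set $\uR^{(i)}=\uR^{(1)}$ for odd $i$ and $\uR^{(i)}=\uR^{(2)}$ for even $i$. Each term is a proper rigid $n$-system of mesh $\delta$. Since $\ux\in\cF(\uR^{(1)},\ue_1)\cap\cF(\uR^{(2)},\ue_1)$, the constant sequence $\ux^{(i)}=\ux$ lies in $\prod_{i\ge 1}\cF(\uR^{(i)},\ue_1)$ and trivially converges. Theorem~\ref{composite:thm} then yields a proper rigid $n$-system $\uR$ of mesh $\delta$ with
\[
 \cF(\uR)=\limsup_{i\to\infty}\cF(\uR^{(i)}).
\]

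It remains to identify this $\limsup$ with $\cF(\uR^{(1)})\cup\cF(\uR^{(2)})$. For the inclusion $\supseteq$, given $\uy\in\cF(\uR^{(1)})$ one takes $\ux_i=\uy$ for odd $i$ and $\ux_i$ equal to any fixed point of $\cF(\uR^{(2)})$ for even $i$; then the odd-indexed subsequence converges to $\uy$, placing $\uy$ in $\limsup_i\cF(\uR^{(i)})$. The symmetric argument handles $\uy\in\cF(\uR^{(2)})$. For the reverse inclusion, one uses that each $\cF(\uR^{(j)})$ is closed in $[0,1]^n$ (being the set of accumulation points at infinity of the function $q\mapsto q^{-1}\uR^{(j)}(q)$); any convergent subsequence of a sequence $(\ux_i)$ with $\ux_i\in\cF(\uR^{(i)})$ has infinitely many indices of one fixed parity, so admits a further subsequence lying entirely in one of the two closed sets, forcing its limit into that set.

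I do not expect any serious obstacle: the hard work is already packaged in Theorem~\ref{composite:thm}, and the hypothesis that $\ux$ lies in both $\cF(\uR^{(j)},\ue_1)$ (rather than merely in both $\cF(\uR^{(j)})$) is precisely what is needed to feed the theorem's assumption about the sequence $(\ux^{(i)})$. The only small subtlety to watch is the closedness of $\cF(\uR^{(j)})$ used in the $\subseteq$ half of the identification of the $\limsup$.
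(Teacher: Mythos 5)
Your proof follows exactly the paper's own argument: apply Theorem~\ref{composite:thm} to the alternating sequence $\uR^{(1)},\uR^{(2)},\uR^{(1)},\dots$ with the constant sequence $\ux^{(i)}=\ux$. You additionally spell out the routine identification $\limsup_i\cF(\uR^{(i)})=\cF(\uR^{(1)})\cup\cF(\uR^{(2)})$, which the paper leaves implicit; that verification is correct.
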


\begin{proof}
It suffices to apply Theorem \ref{composite:thm}
to the sequence $(\uR^{(i)})_{i\ge 1}$ given by
$\uR^{(i)}=\uR^{(1)}$ if $i$ is odd and $\uR^{(i)}=\uR^{(2)}$
if $i$ is even, using the constant sequence
$\ux^{(i)}=\ux$ for each $i\ge 1$. The resulting
$n$-system $\uR$ has the required property.
\end{proof}

If we drop the main assumption in the theorem,
we obtain the following weaker result.

\begin{corollary}
\label{composite:cor2}
Let $(\uP^{(i)})_{i\ge 1}$ be a sequence of proper $n$-systems.
Then, there is a proper $n$-system $\uR$ such that \
$\liminf_{i\to\infty} \cF(\uP^{(i)}) \subseteq
\cF(\uR) \subseteq \limsup_{i\to\infty} \cF(\uP^{(i)})$.
\end{corollary}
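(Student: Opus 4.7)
My plan is to reduce to Theorem \ref{composite:thm} by first replacing each $\uP^{(i)}$ by a rigid $n$-system of common mesh and then extracting a convergent subsequence of witnesses.

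First, I would apply Theorem \ref{systems:thm:approxPbyRigid} with $\delta=1$ to produce, for each $i\ge 1$, an integral $n$-system $\uR^{(i)}$ on an unbounded subinterval of the domain of $\uP^{(i)}$ such that $\uP^{(i)}-\uR^{(i)}$ is bounded. Since $\uP^{(i)}$ is proper (its first component is unbounded) and $\uR^{(i)}$ differs from it by a bounded function, $\uR^{(i)}$ is also proper; for the same reason, $q^{-1}\uP^{(i)}(q)$ and $q^{-1}\uR^{(i)}(q)$ share the same accumulation points as $q\to\infty$, so $\cF(\uP^{(i)})=\cF(\uR^{(i)})$.

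Next, for each $i$, choose a point $\ux^{(i)}\in\cF(\uR^{(i)},\ue_1)$; this set is non-empty because $\uR^{(i)}$ is proper, as remarked after Definition \ref{refined:def:F(P,e1)}. The sequence $(\ux^{(i)})_{i\ge 1}$ lies in the compact set $[0,1]^n$, hence admits a subsequence $(\ux^{(i_k)})_{k\ge 1}$ converging to some point $\ux$. Applying Theorem \ref{composite:thm} to the sequence $(\uR^{(i_k)})_{k\ge 1}$ together with the convergent witnesses $(\ux^{(i_k)})_{k\ge 1}\in\prod_{k\ge 1}\cF(\uR^{(i_k)},\ue_1)$ then yields a proper rigid $n$-system $\uR$ of mesh $1$ satisfying
\[
 \cF(\uR)=\limsup_{k\to\infty}\cF(\uR^{(i_k)}).
\]

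It remains to check the two inclusions. For the upper one, every accumulation point of a subsequence is an accumulation point of the full sequence, so
\[
 \cF(\uR)
  =\limsup_{k\to\infty}\cF(\uR^{(i_k)})
  =\limsup_{k\to\infty}\cF(\uP^{(i_k)})
  \subseteq \limsup_{i\to\infty}\cF(\uP^{(i)}).
\]
For the lower one, any $\uy\in\liminf_{i\to\infty}\cF(\uP^{(i)})$ is the limit of some $(\uy^{(i)})_{i\ge 1}\in\prod_{i\ge 1}\cF(\uP^{(i)})$; then $(\uy^{(i_k)})_{k\ge 1}$ still converges to $\uy$ with $\uy^{(i_k)}\in\cF(\uP^{(i_k)})=\cF(\uR^{(i_k)})$, so $\uy\in\liminf_{k\to\infty}\cF(\uR^{(i_k)})\subseteq\cF(\uR)$, using the trivial inclusion of $\liminf$ in $\limsup$. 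The only delicate point is recognizing that Theorem \ref{composite:thm} demands a convergent sequence of witnesses in the $\cF(\cdot,\ue_1)$ sets, and that the price of securing this by a compactness extraction is precisely the weakening of the equality in that theorem to the pair of inclusions asked for here.
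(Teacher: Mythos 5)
Your proof is correct and follows essentially the same route as the paper's: replace each $\uP^{(i)}$ by a bounded-difference integral $n$-system, pick witnesses in $\cF(\uR^{(i)},\ue_1)$, extract a convergent subsequence by compactness of $[0,1]^n$, and apply Theorem \ref{composite:thm}. You merely spell out more explicitly the final verification that the $\limsup$ over the subsequence sits between the $\liminf$ and $\limsup$ of the full sequence, which the paper leaves implicit.
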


\begin{proof}
For each $i\ge 1$, let $\uR^{(i)}$ be an integral
$n$-system whose difference with $\uP^{(i)}$ is bounded,
and let $\ux^{(i)} \in \cF(\uR^{(i)},\ue_1)$.
Since $(\ux^{(i)})_{i\ge 1}$ is a sequence in $[0,1]^n$,
it contains a converging subsequence
$(\ux^{(i_j)})_{j\ge 1}$.  Then,
Theorem \ref{composite:thm} provides a proper integral
$n$-system $\uR$ such that
$\cF(\uR) = \limsup_{j\to\infty} \cF(\uR^{(i_j)})$.
Since $\cF(\uR^{(i)})=\cF(\uP^{(i)})$ for each $i\ge 1$,
this system has the required property.
\end{proof}

A similar but slightly more elaborate argument yields
the compactness of the spectra.

\begin{corollary}
\label{composite:cor3}
Let $T\colon\bR^n\to\bR^m$ be a linear map.
Then, the associated spectrum $\image^*(\mu_T)$ is a compact
subset of $\bR^m$.
\end{corollary}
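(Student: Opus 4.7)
The plan is to show that $\image^*(\mu_T)$ is both bounded and closed in $\bR^m$. Boundedness is immediate: for any proper $n$-system $\uP$ we have $\cF(\uP)\subseteq[0,1]^n$, and hence $\mu_T(\uP)=\inf T(\cF(\uP))$ lies in the bounded subset $T([0,1]^n)$ of $\bR^m$. The substantive content is closedness, and here the strategy is to take an arbitrary convergent sequence $(\uy^{(i)})_{i\ge 1}$ in $\image^*(\mu_T)$ with limit $\uy\in\bR^m$ and to construct a proper $n$-system $\uR$ for which $\mu_T(\uR)=\uy$.

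First I would use Theorem \ref{spectra:thm:spectrumP} to realize each $\uy^{(i)}$ as $\mu_T(\uR^{(i)})$ for a proper integral $n$-system $\uR^{(i)}$. For every $j=1,\dots,m$, compactness of $\cF(\uR^{(i)})\subseteq[0,1]^n$ lets me choose $\ux^{(i,j)}\in\cF(\uR^{(i)})$ with $T_j(\ux^{(i,j)})=(\uy^{(i)})_j$. In addition, I pick any $\ux^{(i,0)}\in\cF(\uR^{(i)},\ue_1)$, which is non-empty since $\uR^{(i)}$ is proper. A finite diagonal extraction in the compact set $[0,1]^n$ then allows me to replace the given sequence by a subsequence along which $\ux^{(i,j)}\to\uz^{(j)}$ for each $j=0,1,\dots,m$, while $\uy^{(i)}$ still tends to $\uy$. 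At this point I invoke Theorem \ref{composite:thm} on the sequence $(\uR^{(i)})$ equipped with the convergent base sequence $(\ux^{(i,0)})\in\prod_i\cF(\uR^{(i)},\ue_1)$ to obtain a proper rigid $n$-system $\uR$ with $\cF(\uR)=\limsup_{i\to\infty}\cF(\uR^{(i)})$.

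It then remains to verify that $\mu_T(\uR)=\uy$ component-wise. On the one hand, each $\uz^{(j)}$ lies in $\limsup_i\cF(\uR^{(i)})=\cF(\uR)$, and by continuity $T_j(\uz^{(j)})=\lim_i T_j(\ux^{(i,j)})=\lim_i(\uy^{(i)})_j=y_j$, which yields $(\mu_T(\uR))_j\le y_j$. Conversely, any $\ux\in\cF(\uR)$ is, by the definition of $\limsup$ recalled in Definition \ref{composite:def}, the limit of some subsequence $\uw^{(i_k)}\in\cF(\uR^{(i_k)})$, so $T_j(\ux)=\lim_k T_j(\uw^{(i_k)})\ge\lim_k(\mu_T(\uR^{(i_k)}))_j=y_j$, giving $(\mu_T(\uR))_j\ge y_j$. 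Combining both inequalities shows $\mu_T(\uR)=\uy$, and since $\uR$ is proper, $\uy\in\image^*(\mu_T)$, as desired.

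The only non-trivial ingredient is the application of Theorem \ref{composite:thm}, and its hypothesis is automatic from compactness of $[0,1]^n$ applied to the auxiliary sequence $(\ux^{(i,0)})$. I therefore expect no serious obstacle beyond organizing the diagonal subsequence extraction so that the $m+1$ auxiliary sequences converge simultaneously; the heavy lifting has already been done in Section \ref{sec:composite}.
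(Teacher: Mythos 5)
Your proposal is correct and takes essentially the same approach as the paper: both reduce the closedness of the spectrum to Theorem \ref{composite:thm}, producing a proper rigid $n$-system $\uR$ with $\cF(\uR)=\limsup_i\cF(\uR^{(i)})$ from a subsequence along which the auxiliary points in $\cF(\uR^{(i)},\ue_1)$ converge. The only real difference is in how you verify $\mu_T(\uR)=\uy$ at the end: you argue componentwise, tracking points $\ux^{(i,j)}\in\cF(\uR^{(i)})$ achieving each coordinate infimum and using them to get $\mu_T(\uR)\le\uy$, then using the $\limsup$ description to get the reverse inequality; the paper instead introduces the unions $F_j=\bigcup_{i\ge j}\cF(\uR^{(i)})$ and a Lipschitz-plus-distance argument. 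Your version is slightly more direct and avoids the distance estimate, but the underlying mechanism is the same, so this is a stylistic rather than substantive divergence.
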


\begin{proof}
Let $\uy$ be a point of $\bR^m$ in the topological closure
of $\image^*(\mu_T)$.  There exists
a sequence of proper integral $n$-systems $(\uR^{(i)})_{i\ge 1}$
such that $\inf T(\cF(\uR^{(i)}))$ converges to $\uy$ as
$i\to\infty$.  Choose $\ux^{(i)}\in\cF(\uR^{(i)},\ue_1)$
for each $i\ge 1$.  By going to a subsequence if necessary,
we may assume that $\ux^{(i)}$ converges to a point $\ux$
as $i\to\infty$.  Then, Theorem \ref{composite:thm}
provides a proper integral $n$-system $\uR$ such that
$\cF(\uR) = \limsup_{i\to\infty} \cF(\uR^{(i)})$.  For each
integer $j\ge 1$, define
\[
 F_j = \bigcup_{i\ge j} \cF\big(\uR^{(i)}\big).
\]
Since $\cF(\uR^{(i)})\subseteq [0,1]^n$ for each $i$,
the distance $\dist(F_j,\cF(\uR))$ tends to $0$ as $j\to\infty$.
Moreover, since $T$ is a linear map, there exists a
constant $C>0$ such that $\|T(\ux)\|_\infty\le C \|\ux\|_\infty$
for each $\ux\in\bR^n$.  So, we find that
\[
 \|\inf T(F_j)-\inf T(\cF(\uR))\|_\infty
  \le \dist\big( T(F_j),\, T(\cF(\uR)) \big)
  \le C\, \dist\big( F_j,\, \cF(\uR) \big)
\]
also tends to $0$ as $j\to\infty$.  On the other hand, the point
\[
 \inf T(F_j) = \inf\{ \inf T(\cF(\uR^{(i)})) \,;\, i\ge j\}
\]
converges to $\uy$ as $j\to\infty$.  Thus $\uy=\inf T(\cF(\uR))
=\mu_T(\uR)$ belongs to $\image^*(\mu_T)$.  This proves that
$\image^*(\mu_T)$ is a closed subset of $\bR^m$.  It is also
bounded and thus compact since, for $\uy$ and $\uR$ as above, we have
$\cF(\uR)\subseteq [0,1]^n$ and so $\|\uy\|_\infty\le C$.
\end{proof}

To complete the proof of Theorem \ref{intro:thm:muT}, we
use the following observation.

\begin{lemma}
\label{composite:lemma}
Let $T\colon\bR^n\to\bR^m$ be a linear map, and
let $\iota\colon\bR^{n-1}\to\bR^n$ be the linear map given
by $\iota(x_2,\dots,x_n)=(0,x_2,\dots,x_n)$ for any
$x_2,\dots,x_n\in\bR$.  Then, we have
\begin{equation}
\label{composite:lemma:eq}
 \image(\mu_T)
   =\begin{cases}
     \image^*(\mu_T) \cup \image(\mu_{T\circ\iota}) &\text{if $n\ge 3$,}\\
     \image^*(\mu_T) \cup \{T(0,1)\} &\text{if $n=2$.}
    \end{cases}
\end{equation}
\end{lemma}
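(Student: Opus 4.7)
The plan is to prove the two inclusions separately. The ``forward'' direction $\image^*(\mu_T) \cup (\text{second piece}) \subseteq \image(\mu_T)$ is proved by building an $n$-system from the smaller data. For $n \ge 3$, given any $(n-1)$-system $\uQ$ with unbounded domain, I set $\uP(q) = (0, Q_1(q), \ldots, Q_{n-1}(q))$ on the same domain. Axioms (S1)--(S3) for $\uP$ follow from those for $\uQ$ via an index shift by one: $P_1 \equiv 0$ is non-negative, the sum is unchanged, and a slope $\ue_j$ of $\uQ$ corresponds to a slope $\ue_{j+1}$ of $\uP$. Since $q^{-1}\uP(q) = \iota(q^{-1}\uQ(q))$, we get $\cF(\uP) = \iota(\cF(\uQ))$ and thus $\mu_T(\uP) = \mu_{T\circ\iota}(\uQ)$, which lies in $\image(\mu_T)$ by Theorem~\ref{spectra:thm:spectrumP}. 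For $n=2$, the map $\uP(q) = (0,q)$ is readily verified to be a (non-proper) $2$-system with $\cF(\uP) = \{(0,1)\}$, so $T(0,1) \in \image(\mu_T)$.

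For the reverse inclusion, I take $\uy = \mu_T(\uP) \in \image(\mu_T)$ and assume $\uy \notin \image^*(\mu_T)$, so $\uP$ is not proper, i.e.\ $P_1$ is bounded. The goal is to exhibit an $(n-1)$-system $\uQ$ with $\cF(\uP) = \iota(\cF(\uQ))$. The obstruction is that a bounded $P_1$ need not be eventually constant for a general $n$-system: its slope-$1$ intervals could have bounded total length while still accumulating at infinity. I bypass this by invoking Theorem~\ref{systems:thm:approxPbyRigid} to replace $\uP$ by a rigid $n$-system $\uR$ of some mesh $\delta$ with $\uP - \uR$ bounded; since bounded perturbations do not affect ratio limits, $\cF(\uR) = \cF(\uP)$ and $\mu_T(\uR) = \mu_T(\uP)$. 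Now $R_1$ is bounded with values in $\delta\bZ_{\ge 0}$ and is monotone, hence eventually constant, say $R_1(q) = c$ for $q \ge q_1$ (with $c \le q_1$ by the sum axiom).

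I then define $\uQ(q) = (R_2(q+c), \ldots, R_n(q+c))$ on $[q_1-c, \infty)$. Verifying the $(n-1)$-system axioms for $\uQ$ is routine: the sum $Q_1 + \cdots + Q_{n-1}$ equals $(q+c) - c = q$; monotonicity and positivity transfer from $\uR$; no $\ue_1$-slope of $\uR$ occurs past $q_1$ (since $R_1$ is constant there), so all slopes of $\uQ$ lie in $\{\ue_1, \ldots, \ue_{n-1}\}$; and (S3) translates cleanly through the index shift. Moreover $q^{-1}R_1(q) = c/q \to 0$, while for $i \ge 2$ the accumulation points of $q^{-1}R_i(q) = q^{-1}Q_{i-1}(q-c)$ coincide with those of $q^{-1}Q_{i-1}(q)$ (using $(q-c)/q \to 1$ and $Q_{i-1} \le q-c$). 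Hence $\cF(\uR) = \iota(\cF(\uQ))$, and
\[
 \mu_T(\uP) = \mu_T(\uR) = \inf (T\circ\iota)(\cF(\uQ)) = \mu_{T\circ\iota}(\uQ) \in \image(\mu_{T\circ\iota}),
\]
which proves the inclusion for $n \ge 3$. For $n=2$, the reduction degenerates ($\uQ$ is forced to satisfy $Q_1(q) = q$), so $\cF(\uR) = \{(0,1)\}$ and $\mu_T(\uP) = T(0,1)$.

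The only genuinely delicate point is the rigid approximation step (addressing the fact that bounded $P_1$ need not be eventually constant); once that is available, the rest is bookkeeping of the $n$-system axioms through the index-shift $\iota$.
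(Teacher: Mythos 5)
Your proof is correct, and its overall plan is the same as the paper's: build $\iota\circ\uQ$ for the forward inclusion, and peel off a constant first component in the non-proper case for the reverse one. The genuine divergence is in how you justify that the first component stabilizes. The paper directly asserts that a bounded $P_1$ is eventually constant and then passes to $\uR(q)=(P_2(q)+a,\dots,P_n(q)+a)$ with $P_1\equiv(n-1)a$; you instead invoke Theorem~\ref{systems:thm:approxPbyRigid} to replace $\uP$ by a rigid system so that $R_1$ is $\delta\bZ$-valued, monotone and bounded, hence eventually constant, and then use the argument shift $Q_j(q)=R_{j+1}(q+c)$. Both shifts do the bookkeeping correctly, and your detour is sound, but the worry motivating it is actually unfounded: for any $n$-system, a bounded $P_1$ is automatically eventually constant. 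Indeed, take $j<n$ largest with $P_1,\dots,P_j$ all bounded. By condition (S3), a transition from an increasing index in $\{1,\dots,j\}$ to one in $\{j+1,\dots,n\}$ can only occur at a time where $P_{j+1}$ equals some bounded $P_\ell$; since $P_{j+1}$ is monotone and unbounded, this happens at most finitely often. Past that point, once the increasing index lies in $\{1,\dots,j\}$ it could never leave the block, which would make $P_1+\cdots+P_j$ grow linearly and hence unbounded, a contradiction. Therefore the increasing index eventually always exceeds $j$ and $P_1$ is constant from some $q_0$ on. So the paper's shorter argument is correct, and your rigid-approximation step, while valid, costs an extra invocation of Theorem~\ref{systems:thm:approxPbyRigid} that is not strictly needed.
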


\begin{proof}
Let $\uP=(P_1,\dots,P_n)$ be an arbitrary $n$-system with
unbounded domain $I$.  If $\uP$ is proper, then $\mu_T(\uP)$
belongs to $\image^*(\mu_T)$.  Otherwise, its component $P_1$ is
bounded and thus eventually constant, so there exist $q_0\in I$ and
$a\ge 0$ such that $P_1(q)=(n-1)a$ for each $q\in [q_0,\infty)$.
If $n\ge 3$, then the map $\uR\colon [q_0,\infty) \to\Delta_{n-1}$
given by
\[
 \uR(q)=(P_2(q)+a,\dots,P_n(q)+a) \quad (q\ge q_0)
\]
is an $(n-1)$-system, and we have $\cF(\uP)=\{0\}\times\cF(\uR)$,
thus $\mu_T(\uP)=\mu_{T\circ\iota}(\uR)$ belongs to
$\image(\mu_{T\circ\iota})$.
If $n=2$, then $P_2(q)=q-a$ for each $q\ge q_0$, thus
$\cF(\uP)=\{(0,1)\}$, and so $\mu_T(\uP)=T(0,1)$.
This shows that $\image(\mu_T)$ is contained in the right-hand
side of \eqref{composite:lemma:eq}.  The reverse inclusion
follows from the fact that, when $n=2$, the map $\uP\colon
[0,\infty) \to\Delta_2$ given by $\uP(q)=(0,q)$ for each $q\ge 0$
is a $2$-system with $\mu_T(\uP)=T(0,1)$, while, if $n\ge 3$,
the composite $\iota\circ\uR$ is an $n$-system with
$\mu_T(\iota\circ\uR)=\mu_{T\circ\iota}(\uR)$ for any
$(n-1)$-system $\uR$ with unbounded domain.
\end{proof}

\begin{proof}[Proof of Theorem \ref{intro:thm:muT}]
Let $T\colon\bR^n\to\bR^m$ be a linear map.  The connectedness
and the compactness of the spectrum $\image^*(\mu_T)$ follow
respectively from Corollaries \ref{self:cor} and \ref{composite:cor3}.
The compactness of $\image(\mu_T)$ then follows, by induction on $n$,
from the above lemma.  Finally, $\image(\mu_T)$ is not connected
if we take for example $n=2$ and $T\colon\bR^2\to\bR$ given
by $T(x_1,x_2)=x_2$
because, for any proper $2$-system $\uP=(P_1,P_2)$, we have
$P_1(q)=q-P_2(q)\le P_2(q)$ for each $q$ in the domain of $\uP$, with
equality for arbitrarily large values of $q$, thus
$\mu_T(\uP)=1/2$ which, by the above lemma, gives
$\image(\mu_T)=\{1/2,1\}$, a non-connected set.
 \end{proof}

%
%

\section{Self-similar non-degenerate $3$-systems}
\label{sec:3sys}

In this section, we provide a complete description of the sets
$\cF(\uS)$ attached to self-similar non-degenerate $3$-systems.
We conclude with a proof of Theorem \ref{intro:thm:u,v,w}
which, as we saw in the introduction, implies that,
in dimension $n=3$, any spectrum of exponents (as in Definition
\ref{intro:def:image_muT}) is closed under the minimum.

Let $\uP=(P_1,P_2,P_3)\colon [q_0,\infty) \to\Delta_3$ be a proper
non-degenerate $3$-system.  For each $q\ge q_0$,
the point $q^{-1}\uP(q)$ belongs to the triangle
\[
 \bar\Delta^{(3)}
 = \{ (x_1,x_2,x_3)\in\bR^3\,;\,
     0\le x_1\le x_2\le x_3 \text{ and } x_1+x_2+x_3=1\}
\]
with vertices
\[
 \uf_1=\frac{1}{2}(\ue_2+\ue_3), \quad
 \uf_2=\frac{1}{3}(\ue_1+\ue_2+\ue_3)
  \et
 \uf_3=\ue_3.
\]
So, the map
$\varphi_\uP\colon [q_0,\infty) \to\bar\Delta^{(3)}$
given by $\varphi_\uP(q)=q^{-1}\uP(q)$ $(q\ge q_0)$
represents a continuous path in that closed set.

When $q$ is a switch number of $\uP$, the coordinates
of $\uP(q)$ form a strictly increasing sequence of
positive numbers and so $q^{-1}\uP(q)$ is a
point in the relative interior of $\bar\Delta^{(3)}$
denoted
\[
 \Delta^{(3)}
 = \{ (x_1,x_2,x_3)\in\bR^3\,;\,
     0< x_1< x_2< x_3 \text{ and } x_1+x_2+x_3=1\}.
\]
When $q$ is a division number of $\uP$ which is not
a switch number, the point $\uP(q)$ is of the form
$(a,a,b)$ or $(a,b,b)$ with $0<a<b$, and
so $q^{-1}\uP(q)$ belongs to one of the
open line segments
\begin{align*}
 L&=\{(x_1,x_2,x_3)\in\bar\Delta^{(3)}\,;\,
   0<x_1=x_2<x_3\}
  =  (\uf_2,\uf_3), \\
\text{or}\quad
L^*&=\{(x_1,x_2,x_3)\in\bar\Delta^{(3)}\,;\,
   0<x_1<x_2=x_3\}
   = (\uf_2,\uf_1),
\end{align*}
using $[\ux,\uy]$, $[\ux,\uy)$, $(\ux,\uy]$
and $(\ux,\uy)$ as shorthand to denote the various
line segments between points $\ux$ and $\uy$ in
$\bR^3$, with $\ux$ and $\uy$ included or not according
to the same convention as for subintervals of $\bR$.

We also note that, when $u<v$ are consecutive division
numbers of $\uP$, all components of $\uP$ are constant
on $[u,v]$ except one, say $P_j$, which is strictly
increasing. Then, we have
\[
 \varphi_\uP([u,v]) 
  = \left[ u^{-1}\uP(u),\, v^{-1}\uP(v) \right]
  \subset \left[ u^{-1}\uP(u),\, \ue_j \right).
\]
It follows from this that $\varphi_\uP$ maps any compact
subinterval of $[q_0,\infty)$ to a polygonal chain
in $\bar\Delta^{(3)}$.  In particular, if $\uP$ is
self-similar, with $\uP(\rho q)=\rho\uP(q)$ for each
$q\ge q_0$ and some fixed $\rho>1$, then
$\varphi_\uP(\rho q)=\varphi_\uP(q)$ for each
$q\ge q_0$ and so $\cF(\uP)=\varphi_\uP([q_0,\rho q_0])$
is a closed polygonal chain.  This partly explains the
next result.

\begin{proposition}
\label{3sys:prop:chains}
The sets $\cF(\uS)$ where $\uS$ runs through
the self-similar non-degenerate $3$-systems are
the closed chains in $\bar\Delta^{(3)}$ defined
as follows.
\end{proposition}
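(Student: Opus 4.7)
My plan is to split the proof into the two natural directions determined by the bijection suggested by the preamble: every self-similar non-degenerate $3$-system $\uS$ gives rise to a closed chain $\cF(\uS)$ of the described type, and conversely every such closed chain arises from some self-similar non-degenerate $3$-system.

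For the forward direction, I would exploit the identity $\varphi_\uS(\rho q)=\varphi_\uS(q)$ noted in the text, which gives $\cF(\uS)=\varphi_\uS([q_0,\rho q_0])$ as a continuous image of a compact interval. Between consecutive division numbers $u<v$, the excerpt already records that $\varphi_\uS([u,v])$ is the line segment $[u^{-1}\uS(u),v^{-1}\uS(v)]$ lying on the half-line from $u^{-1}\uS(u)$ through $\ue_j$, where $P_j$ is the unique component increasing on $[u,v]$. Hence $\varphi_\uS$ on $[q_0,\rho q_0]$ is a finite polygonal chain each of whose segments is radial toward some vertex $\ue_j$; by self-similarity its endpoints coincide, so it is a closed chain. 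Non-degeneracy places its turning points (images of switch numbers) in the open interior $\Delta^{(3)}$ and the remaining vertices (images of plain division numbers) on $L\cup L^*$, and the transition rule $k_i\le\ell_{i+1}$ from conditions (S2)--(S3) translates into the required local combinatorial constraint at each vertex of the chain.

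For the reverse direction, I would invert this construction using the canvas machinery of Section \ref{sec:systems}. Given a closed chain $C$ of the prescribed type, list its vertices cyclically as $\ux^{(1)},\dots,\ux^{(s)}$ together with the vertex-index $j_i\in\{1,2,3\}$ labeling the segment leaving $\ux^{(i)}$. Pick any $q_1>0$ and set $\ua^{(i)}=q_i\Phi_3(\ux^{(i)})$ where the successive scales $q_i$ are forced by the requirement $a_1^{(i)}+a_2^{(i)}+a_3^{(i)}=q_i$ and the identity of two coordinates between $\ua^{(i)}$ and $\ua^{(i+1)}$ dictated by the segment labels. The hypotheses on $C$ (its vertices lie in $\Delta^{(3)}\cup L\cup L^*$, segments are radial toward an $\ue_j$, the turning rule holds) translate directly into conditions (C1)--(C3) for a finite pre-canvas. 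Setting $\rho=q_{s+1}/q_1>1$, the closure of the chain forces $\ua^{(s+1)}=\rho\ua^{(1)}$ with compatible transition indices, and the ``gluing'' paragraph of Section \ref{sec:systems} then extends the associated finite pre-canvas system to a self-similar non-degenerate $3$-system $\uS$ on $[q_1,\infty)$ with $\uS(\rho q)=\rho\uS(q)$. By construction, $\cF(\uS)$ equals the original chain $C$.

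The main obstacle will be the combinatorial bookkeeping matching the geometric/topological data of a closed chain in $\bar\Delta^{(3)}$ to the algebraic data of a pre-canvas, particularly at the three kinds of vertices. Interior vertices in $\Delta^{(3)}$ correspond to switch numbers with $k_i<\ell_i$ and impose a genuine constraint on incoming/outgoing directions; vertices on $L$ or $L^*$ correspond to plain division points where $k_i=\ell_i$, which the excerpt explicitly allows in a pre-canvas and which can be collapsed without changing the associated $3$-system. A further delicate point is the seam: one must check that when the chain closes up, the transition from $\ua^{(s)}$ back to $\rho\ua^{(1)}$ satisfies the compatibility (C3) so that the gluing produces a bona fide self-similar $3$-system rather than a system with an artificial discontinuity at each scale $\rho^i q_1$. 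Once these local-to-global checks are carried out, the equality $\cF(\uS)=C$ is immediate from the formula $\cF(\uS)=\varphi_\uS([q_1,\rho q_1])$ and the explicit description of $\varphi_\uS$ on each interval between successive division numbers.
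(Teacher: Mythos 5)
Your overall strategy --- translate directly between the combinatorics of a non-degenerate canvas and the geometry of a closed polygonal chain via the map $\varphi_\uS$, using the explicit description of $\varphi_\uS$ on each interval between consecutive division numbers --- is the same route the paper takes. The reverse direction, reconstructing scales $q_i$ from the projective data $\ux^{(i)}$ and the segment labels and then gluing, is also essentially what the paper does (it builds the lifted sequence $\tA_i$ by accumulating the ratios $\lambda_i$).

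However, there is a genuine gap in your forward direction. You assert that ``the transition rule $k_i\le\ell_{i+1}$ from (S2)--(S3) translates into the required local combinatorial constraint at each vertex of the chain.'' But Definition \ref{3sys:def:chains} is not a collection of local constraints at individual vertices: it requires a global decomposition of the closed chain into \emph{simple chains} based at points of $L$, each simple chain having the specific alternating pattern $A\,A^*_1\,C^*_1\cdots A^*_g\,C^*_g\,C_h\,A_h\cdots C_1\,A_1$ (an ascending $L^*/\Delta^{(3)}$ phase, a single $\ue_2$-seam, then a descending $\Delta^{(3)}/L$ phase). Deriving this structure from the local transition rule is the actual content of the proposition. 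The paper does it by (a) observing that for a non-degenerate canvas the switch-number transition pairs $(k_i,\ell_{i+1})$ live in $\{(1,3),(2,3),(2,2),(1,2)\}$ and form a walk in a specific directed graph \eqref{3sys:eq:graph}, and crucially (b) proving that $(1,3)$ occurs infinitely often because $\uS$ is proper (a self-similar system is automatically proper, but this implication and its use must be invoked); without (b) the walk could eventually stabilize at $(2,3)$ or $(1,2)$ and the chain would never return to a point of $L$, so the claimed decomposition into simple chains would fail. Finally, (c) the paper inspects the walk between two consecutive $(1,3)$-occurrences and shows it must look like $(1,3),(2,3),\dots,(2,3),\{(2,2)\},(1,2),\dots,(1,2),(1,3)$, which is exactly what produces the simple-chain pattern. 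You flag this bookkeeping as ``the main obstacle'' but do not carry it out, and the graph-walk plus properness argument is not just bookkeeping --- it is where the theorem's specific combinatorial form comes from.
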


\begin{definition}
\label{3sys:def:chains}
A \emph{simple chain} in $\bar\Delta^{(3)}$
from $A\in L$ to $A_1\in L$
is a polygonal chain of the form
\[
 A\, A^*_1\, C^*_1\, A^*_2\, C^*_2\, \cdots\, A^*_g\, C^*_g\,
 C_h\, A_h\, \cdots\, C_2\, A_2\, C_1\, A_1
\]
for some integers $g,h\ge 1$, where
\begin{alignat*}{3}
 &A^*_1\in L^*\cap [A, \ue_2],
       &&C_1^*\in \Delta^{(3)} \cap [A^*_1,\ue_3], \\
 &A^*_i\in L^* \cap [C^*_{i-1}, \ue_2],
       &&C_i^*\in \Delta^{(3)} \cap [A^*_i,\ue_3] \quad
       &&\text{for $i=2,\dots,g$,}\\
 &C_h\in \Delta^{(3)} \cap [C_g^*, \ue_2], \qquad
       &&A_h\in L \cap [C_h,\ue_1],\\
 &C_i\in \Delta^{(3)} \cap [A_{i+1}, \ue_2],
       &&A_i\in L \cap [C_i,\ue_1]
       &&\text{for $i=h-1,\dots,1$.}
\end{alignat*}
A \emph{closed chain} in $\bar\Delta^{(3)}$
is a closed polygonal chain which is a succession
of simple chains from $A^{(1)}$ to $A^{(2)}$,
$A^{(2)}$ to $A^{(3)}$, \dots, $A^{(s)}$ to $A^{(1)}$,
for some points $A^{(1)},\dots,A^{(s)}$ of
$L$ with $s\ge 2$.
\end{definition}

In both kinds of chain, any vertex, except the first,
lies on the line segment joining the preceding vertex
to $\ue_1$, $\ue_2$ or $\ue_3$.  Figure \ref{3sys:fig:schain}
below illustrates the notion of a simple chain.

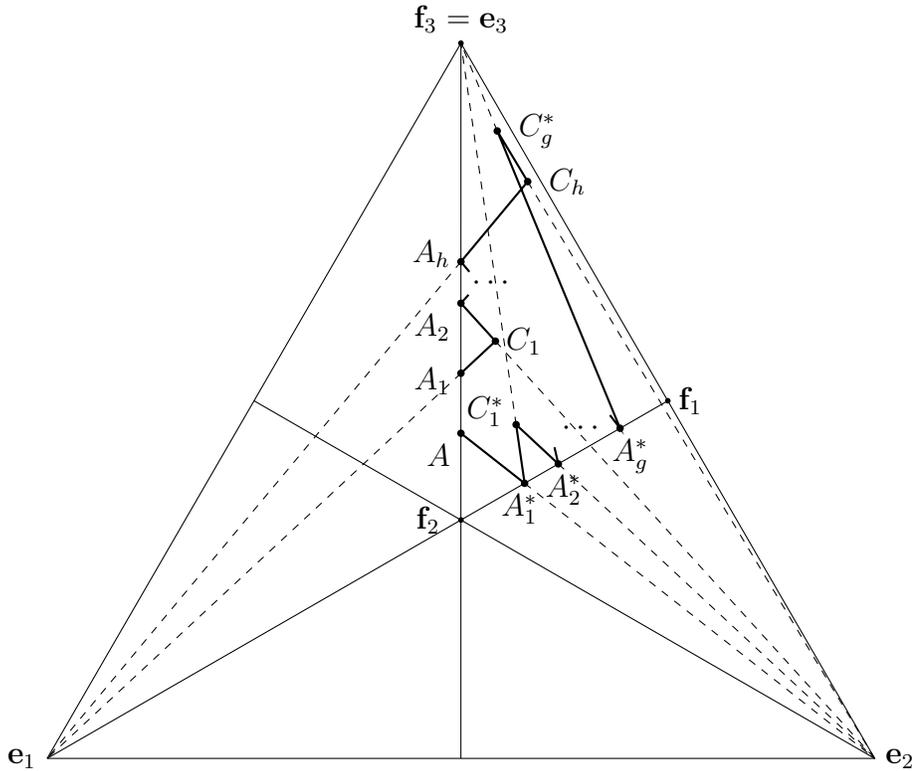
\begin{figure}[hb]
     \begin{tikzpicture}[scale=0.55]
       \coordinate (B) at (20,0);
       \coordinate (C) at (10,17.3);
       \coordinate (MBC) at ($0.5*(B)+0.5*(C)$);
       \coordinate (L23) at ($0.5*(B)+0.5*(C)$);
       \coordinate (M) at ($1/3*(B)+1/3*(C)$);
       \coordinate (L12) at ($0.35*(M)+0.65*(C)$);
       \draw[] (0,0)--(B)--(C)--(0,0);
       \draw[] (0,0)--(MBC);
       \node[circle, inner sep=0.75pt, fill] at (MBC) {};
       \node[right] at (MBC) {$\uf_1$};
       \draw[] (10,0)--(C);
       \draw[] ($0.5*(C)$)--(20,0);
       \node[left] at (0,0) {$\ue_1$};
       \node[right] at (B) {$\ue_2$};
       \node[circle, inner sep=0.75pt, fill] at (C) {};
       \node[above] at (C) {$\uf_3=\ue_3$};
       \node[circle, inner sep=0.75pt, fill] at (M) {};
       \node[left] at (M) {$\uf_2\ $};
       \coordinate (A2) at ($3/11*(B)+5/11*(C)$); 
       \coordinate (A3) at ($5/13*(B)+5/13*(C)$); 
       \coordinate (A4) at ($5/15*(B)+7/15*(C)$); 
       \coordinate (PA4) at ($(A4)-(1,0.3)$);
       \coordinate (A5) at ($7/17*(B)+7/17*(C)$); 
       \coordinate (E5) at ($7/17.7*(B)+7.7/17.7*(C)$); 
       \draw[thick] (A2)--(A3)--(A4)--(A5)--(E5);
       \draw[dashed] (A3)--(B);
       \draw[dashed] (A4)--(C);
       \draw[dashed] (A5)--(B);
       \node[circle, inner sep=1pt, fill] at (A2) {};
       \node[below left] at (A2) {$A$};
       \node[circle, inner sep=1pt, fill] at (A3) {};
       \node[below] at (A3) {$A^*_1\ $};
       \node[circle, inner sep=1pt, fill] at (A4) {};
       \node[above] at (PA4) {\ \ $C^*_1$};
       \node[circle, inner sep=1pt, fill] at (A5) {};
       \node[below] at (A5) {$\ A^*_2$};
       \coordinate (E9) at ($11/25*(B)+12/25*(C)$); 
       \coordinate (A9) at ($12/26*(B)+12/26*(C)$); 
       \coordinate (A10) at ($12/114*(B)+100/114*(C)$);
       \coordinate (A11) at ($22/124*(B)+100/124*(C)$);
       \coordinate (A12) at ($22/144*(B)+100/144*(C)$);
       \coordinate (PA12) at ($(A12)+(0,0.2)$);
       \coordinate (E12) at ($25/147*(B)+100/147*(C)$);
       \draw[thick] (E9)--(A9)--(A10)--(A11)--(A12)--(E12);
       \draw[dashed] (A10)--(C);
       \draw[dashed] (A11)--(B);
       \draw[dashed] (A12)--(0,0);
       \node[circle, inner sep=1pt, fill] at (A9) {};
       \node at (A9) [below]{$\ \ A^*_g$};
       \node at (A9) [left]{$\cdots\ $};
       \node[circle, inner sep=1pt, fill] at (A10) {};
       \node at (A10) [right]{$\ C^*_g$};
       \node[circle, inner sep=1pt, fill] at (A11) {};
       \node at (A11) [right]{$\ C_h$};
       \node[circle, inner sep=1pt, fill] at (A12) {};
       \node at (PA12) [left]{$A_h$};
       \coordinate (E13) at ($10/54*(B)+35/54*(C)$);
       \coordinate (A13) at ($10/55*(B)+35/55*(C)$);
       \coordinate (PA13) at ($1/2*(A12)+1/2*(A13)$);
       \coordinate (A14) at ($15/60*(B)+35/60*(C)$);
       \coordinate (A15) at ($15/65*(B)+35/65*(C)$);
       \coordinate (PA15) at ($(A15)+(0,0.4)$);
       \draw[thick] (E13)--(A13)--(A14)--(A15);
       \draw[dashed] (A14)--(B);
       \draw[dashed] (A15)--(0,0);
       \node at (PA13) [right]{$\cdots$};
       \node[circle, inner sep=1pt, fill] at (A13) {};
       \node at (A13) [below left]{$A_2$};
       \node[circle, inner sep=1pt, fill] at (A14) {};
       \node at (A14) [right]{$C_1$};
       \node[circle, inner sep=1pt, fill] at (A15) {};
       \node at (PA15) [below left]{$A_1$};
       \end{tikzpicture}
 \caption{A simple chain in $\bar\Delta^{(3)}$.}
 \label{3sys:fig:schain}
\end{figure}

\begin{proof}[Proof of Proposition \ref{3sys:prop:chains}]
Let $\big(\ua^{(i)}\big)_{i\ge 0}$ be a canvas in
$\bR^3$ whose associated $3$-system $\uS$
is self-similar and thus proper.  For each
$i\ge 0$, let $(k_i,\ell_{i+1})$ denote the pair of transition
indices defined by Condition (C2) from Section
\ref{sec:systems}.  By definition of a canvas, we have
$k_i<\ell_i$ for each $i\ge 1$, thus $k_i\neq 3$,
$\ell_i\neq 1$ and so
\[
 (k_i,\ell_{i+1})
   \in \{1,2\}\times\{2,3\}
   = \{(1,3),\, (2,3),\, (2,2),\, (1,2)\}
   \quad
   (i\ge 1).
\]
The condition $k_i\le \ell_{i+1}$ from (C2) is automatically
satisfied for these pairs.  The sequence
$\big((k_i,\ell_{i+1})\big)_{i\ge 1}$  can thus be viewed
as a walk in the following directed graph.
\begin{equation}
\label{3sys:eq:graph}
\begin{tikzpicture}[baseline=(current  bounding  box.center),
     ->,>=stealth',semithick,scale=0.8]
  \node (A) at (0,2) {$(1,3)$};
  \node (B) at (3,2) {$(2,3)$};
  \node (C) at (0,0) {$(1,2)$};
  \node (D) at (3,0) {$(2,2)$};
  \path (A) edge [loop left] (A) edge (B) edge (C) edge (D)
        (B) edge [loop right](B) edge (A) edge (C) edge (D)
        (C) edge [loop left] (C) edge (A)
        (D) edge (A) edge (C);
\end{tikzpicture}
\end{equation}
If there were only finitely many pairs equal to $(1,3)$, then
this sequence would eventually become constant and equal to $(2,3)$
or $(1,2)$, forcing the sequence $(a^{(i)}_1)_{i\ge 1}$
to be bounded, against the fact that $\uS$ is proper.
Thus, we have $(k_i,\ell_{i+1})=(1,3)$ for infinitely many
indices $i\ge 1$.

Consider two consecutive occurrences of the pair $(1,3)$,
say $(k_i,\ell_{i+1})=(k_j,\ell_{j+1})=(1,3)$ with
$1\le i<j$.  According to the above graph \eqref{3sys:eq:graph},
the intermediate pairs are
\[
 (k_i,\ell_{i+1})=(1,3),\,
 \underbrace{(2,3),\dots,(2,3)}_{\disp \text{$(g-1)$ times}},\,
 \big\{(2,2)\big\},\,
 \underbrace{(1,2),\dots,(1,2)}_{\disp \text{$(h-1)$ times}},\,
 (1,3)=(k_j,\ell_{j+1})
\]
for some integers $g,h\ge 1$, where the braces around
the pair $(2,2)$ indicate that this pair may or may not
appear in the sequence. The corresponding points
of the canvas are
\begin{equation}
\label{3sys:eq:liste:pts}
 \begin{aligned}
 \ua^{(i)} =
   &(c^*,a^*,b^*),\, (a^*,b^*,c^*_1),\, (a^*,c^*_1,c^*_2),\,
                      \dots\,, (a^*,c^*_{g-1},c^*_g=b),\\
   &(c_h=a^*,c_{h-1},b),\,\dots,\, (c_2,c_1,b),\,
         (c_1,a,b),\, (a,b,c)= \ua^{(j+1)},
 \end{aligned}
\end{equation}
for real numbers
\[
 0 < c^* < a^*=c_h < b^*=c^*_0 < c^*_1 < \cdots
< c^*_{g-1} \le c_{h-1}< \cdots < c_1 < a=c_0 < b=c^*_g < c
\]
with $c^*_{g-1}=c_{h-1}$ if there is no intermediate pair $(2,2)$.

Each pair of consecutive points in \eqref{3sys:eq:liste:pts} forms a
canvas and its associated $3$-system is the restriction of
$\uS$ to some compact interval $I$.  We describe below the corresponding
polygonal chain $F=\varphi_\uS(I)=\{q^{-1}\uS(q)\,;\,q\in I\}$
for each of them.
\begin{itemize}
 \item For the pair $\big((c^*,a^*,b^*),\, (a^*,b^*,c_1^*)\big)$ with
   transition indices $(1,3)$, there are two intermediate
   division points $(a^*,a^*,b^*)$ and $(a^*,b^*,b^*)$,
   so $F=CAA^*_1C^*_1$ where $C\in\Delta^{(3)}$,
   $A\in L\cap [C,\ue_1]$, $A^*_1\in L^* \cap [A,\ue_2]$ and
   $C^*_1\in \Delta^{(3)}\cap [A^*_1,\ue_3]$.
 \item For each of the pairs $\big((a^*,c^*_{i-2},c^*_{i-1}),\,
   (a^*,c^*_{i-1},c^*_i)\big)$
   $(2\le i\le g)$ with transition indices $(2,3)$, there is only
   one intermediate division point $(a^*,c^*_{i-1},c^*_{i-1})$,
   so $F = C^*_{i-1} A^*_i C^*_i$ where
   $A^*_i \in L^* \cap [C^*_{i-1},\ue_2]$ and
   $C^*_i\in\Delta^{(3)}\cap [A^*_i,\ue_3]$.
 \item For the pair $\big((a^*,c^*_{g-1},b),\, (a^*,c_{h-1},b)\big)$
   with transition indices $(2,2)$, there is
   no intermediate division point, so $F=C^*_gC_h$
   where $C_h \in \Delta^{(3)} \cap [C^*_g,\ue_2]$.
 \item For each of the pairs $\big((c_{i+1},c_i,b),\,
   (c_i,c_{i-1},b)\big)$
   $(h-1\ge i\ge 1)$ with transition indices $(1,2)$, there is only
   one intermediate division point $(c_i,c_i,b)$, so
   $F=C_{i+1}A_{i+1}C_i$ where $A_{i+1} \in L \cap [C_{i+1},\ue_1]$
   and $C_i \in \Delta^{(3)} \cap [A_{i+1},\ue_2]$.
 \item Finally, for the pair $\big((c_1,a,b),\,
   (a,b,c)\big)$ with transition indices $(1,3)$,
   the situation is the same as for the first pair, so $F=C_1A_1A'C'$
   where $A_1\in L\cap [C_1,\ue_1]$, $A'\in L^* \cap [A_1,\ue_2]$ and
   $C'\in\Delta^{(3)} \cap [A_1,\ue_3]$.
\end{itemize}
Thus $\varphi_\uS([2a^*+b^*,2a+b]) = A A^*_1 C^*_1\cdots C_1A_1$
is a simple chain in $\bar\Delta^{(3)}$, from $A$ to $A_1$,
as in Definition \ref{3sys:def:chains}.
Finally, since $\uS$ is self-similar, finitely many of these
chains suffice to cover the image of $\varphi_\uS$ and thus
the set $\cF(\uS)$ itself is a closed chain.

Conversely, suppose that $A_1A_2A_3\cdots A_{m+1}$
with $A_{m+1}=A_1$ is a
closed chain in $\bar\Delta^{(3)}$.  Then, for each
$i=1,\dots,m$, the point $A_{i+1}$ lies on the line segment
joining $A_i$ to $\ue_j$ for some $j$.  So, we can write
$A_{i+1}=\lambda_iA_i+(1-\lambda_i)\ue_j$ for some $\lambda_i
\in (0,1)$.  Then $A_{i+1}$ and $\lambda_i A_i$ have the same
$k$-th coordinate for each index $k$ distinct from $j$,
and the sum of the coordinates of $A_{i+1}$ exceeds that of
$\lambda_i A_i$.  Since $A_{m+1}=A_1$, the sequence
$(\tA_i)_{i\ge 1}$ defined recursively by
\[
 \tA_1=A_1,
 \quad
 \tA_{i+1}=(\lambda_1\dots\lambda_i)^{-1}A_{i+1}
 \  (1\le i\le m),
 \quad
 \tA_{i+m}=(\lambda_1\dots\lambda_m)^{-1}\tA_i
 \  (i\ge 1)
\]
has the property that consecutive points $\tA_i,\tA_{i+1}$
share two equal coordinates with the remaining coordinate
larger in $\tA_{i+1}$ than in $\tA_i$.  One checks that
the subsequence consisting of $\tA_1$ and the points $\tA_i$
with three different coordinates (those coming from points of
the open triangle $\Delta^{(3)}$) forms a canvas and that
$(\tA_i)_{i\ge 1}$ is the sequence of division points of its associated
non-degenerate $3$-system $\uS$. By construction, this
$3$-system is self-similar and $\cF(\uS)$ coincides with
the given closed chain.
\end{proof}

\begin{corollary}
\label{3sys:cor1}
Let $\uS^{(1)}$ and $\uS^{(2)}$ be a pair of
non-degenerate self-similar $3$-systems.  Define
$F=\cF(\uS^{(1)})\cup\cF(\uS^{(2)})$ and let $K$
denote the convex hull of $F$.  Then, there exists
a non-degenerate self-similar $3$-system $\uS$ with
$F\subseteq \cF(\uS)\subseteq K$.
\end{corollary}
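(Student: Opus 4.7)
By Proposition \ref{3sys:prop:chains}, each set $\Gamma_j := \cF(\uS^{(j)})$ for $j=1,2$ is a closed chain in $\bar\Delta^{(3)}$, and conversely every closed chain in $\bar\Delta^{(3)}$ arises as $\cF(\uS)$ for some self-similar non-degenerate $3$-system $\uS$. It therefore suffices to construct a closed chain $\Gamma \subseteq K$ that contains $\Gamma_1 \cup \Gamma_2$: applying Proposition \ref{3sys:prop:chains} to $\Gamma$ then produces the required $\uS$ with $F = \Gamma_1 \cup \Gamma_2 \subseteq \cF(\uS) = \Gamma \subseteq K$.

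The plan is to pick a distinguished vertex $B_j$ of $\Gamma_j$ on $L$ for $j = 1, 2$, and assemble $\Gamma$ by concatenating: all of $\Gamma_1$ (traversed as a loop based at $B_1$), a bridge $\tau_{12}$ from $B_1$ to $B_2$ consisting of one or more simple chains lying in $K$, all of $\Gamma_2$ (traversed as a loop based at $B_2$), and a return bridge $\tau_{21}$ from $B_2$ back to $B_1$, also in $K$. Since each $\Gamma_j$ already lies in $K$, the only nontrivial task is to produce $\tau_{12}$ and $\tau_{21}$ inside $K$. A convenient feature of the setup is that, because $B_1$ is chosen as a vertex of $\Gamma_1$ on $L$, the forced next vertex $A^*_1 = L^* \cap [B_1, \ue_2]$ of any simple chain emanating from $B_1$ automatically coincides with the corresponding $L^*$-vertex of $\Gamma_1$ and therefore lies in $K$; the analogous statement holds at the other endpoint $B_2$.

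The heart of the matter is the construction of the intermediate portion of each bridge. Recall from Definition \ref{3sys:def:chains} that the interior vertices $C^*_i \in [A^*_i, \ue_3] \cap \Delta^{(3)}$ and $C_i \in [A_{i+1}, \ue_2] \cap \Delta^{(3)}$ of a simple chain of complexity $(g, h)$ range freely along open line segments, giving $g + h$ real degrees of freedom subject to the single endpoint-matching condition $A_1 = B_2$. The strategy is to take $g, h$ large enough and to place each free vertex as a convex combination of $\Delta^{(3)}$-vertices of $\Gamma_1 \cup \Gamma_2$, which then lies in $K$ by convexity; the resulting forced $A^*_i$ and $A_i$ sit on $L^*$ and $L$ respectively, and one arranges that they fall inside $K \cap L^*$ and $K \cap L$, each of which is a subsegment containing relevant vertices of $\Gamma_1$ and $\Gamma_2$. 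If a single simple chain does not suffice, one instead uses a sequence of short simple chains joined at intermediate $L$-vertices chosen inside $K \cap L$, exploiting the same freedom.

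The main obstacle is to make this strategy rigorous: for given $B_1, B_2$ and given $K$, one must prove that some valid choice of $(g, h)$ and of the free parameters actually produces a simple-chain sequence that stays in $K$ while connecting $B_1$ to $B_2$. I would settle this by a planar case analysis inside $\bar\Delta^{(3)}$ based on the allowed transitions $(1,3), (2,3), (2,2), (1,2)$ encoded in the graph~\eqref{3sys:eq:graph}, together with the shape of the convex set $K$ relative to the three regions $L$, $L^*$ and $\Delta^{(3)}$. Choosing the $B_j$ themselves adaptively among the $L$-vertices of $\Gamma_j$ provides additional flexibility if needed, and in this way the bridges $\tau_{12}$ and $\tau_{21}$ can always be placed inside $K$, completing the construction.
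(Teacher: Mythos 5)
Your high-level plan (traverse $\Gamma_1$ as a loop, bridge to $\Gamma_2$, traverse it, bridge back) is the correct one, and it matches the overall shape of the paper's argument. However, you explicitly leave the decisive step unfinished: you state that "the main obstacle is to make this strategy rigorous" and then only sketch a vague "planar case analysis" for producing the bridges without actually constructing them. This is a genuine gap, because producing a bridge that simultaneously respects the rigid combinatorics of closed chains and remains inside $K$ is exactly where the content of the proof lies; it does not follow from general position or dimension counting.

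The paper's proof closes this gap with a short, explicit construction. Denote the closed chains as $\Gamma_j = A_j A^*_j C^*_j \cdots C_j A_j$ with $A_j \in L$, $A^*_j \in L^* \cap [A_j, \ue_2]$, and assume (WLOG, after possibly swapping the two systems) that $A_1 \in (\uf_2, A_2]$, which forces $A^*_1 \in (\uf_2, A^*_2]$. Then let $C$ be the intersection of the line $\overleftrightarrow{\ue_1 A_1}$ with the segment $[A_2, A^*_2]$, and $C^*$ the intersection of $\overleftrightarrow{\ue_3 A^*_1}$ with $[A_2, A^*_2]$. These two points lie on $[A_2, A^*_2]$, whose endpoints are already in $F$, so $C, C^* \in K$; at the same time they satisfy the incidence constraints needed to extend a chain ($C^* \in [A^*_1, \ue_3]$, $A^*_2 \in [C^*, \ue_2]$, $C \in [A_2, \ue_2]$, $A_1 \in [C, \ue_1]$). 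The new closed chain is then $\Gamma_1$ as a loop at $A_1$, followed by $A_1 \to A^*_1 \to C^*$, then $\Gamma_2$ (traversed, entering at $A^*_2$) around to $A_2$, and finally $A_2 \to C \to A_1$; all its vertices are either vertices of $\Gamma_1 \cup \Gamma_2$ or the two new points $C, C^*$, so the whole chain sits inside $K$ and contains $F$. Proposition \ref{3sys:prop:chains} then produces the desired $\uS$. The ordering hypothesis $A_1 \in (\uf_2, A_2]$ is what guarantees that the two lines actually hit the segment $[A_2, A^*_2]$ in the right order; your proposal does not identify this normalization, nor the idea of using lines through $\ue_1$ and $\ue_3$ anchored at $A_1, A^*_1$ to hit a segment of $K$, and without these the bridge construction you appeal to remains unjustified.
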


In particular, for such a $3$-system $\uS$, we obtain
that $\cK(\uS)=K$ is also the convex hull of
$\cK(\uS^{(1)})\cup\cK(\uS^{(2)})$.

\medskip
\noindent
\begin{minipage}{.75\textwidth}
\begin{proof}
By the above proposition, each set $\cF(\uS^{(j)})$ is
a closed chain $A_jA^*_jC^*_j\cdots C_jA_j$ starting
with $A_j\in L$ and $A^*_j\in L^*\cap [A_j,\ue_2]$.
Without loss of generality, we may assume
that $A_1\in (\uf_2,A_2]$.  Then, we have
$A^*_1\in (\uf_2,A^*_2]$ as illustrated
in the figure on the right.
Let $C$ and $C^*$ denote
the points of $[A_2,A^*_2]$ which belong respectively
to the lines $\overleftrightarrow{\ue_1 A_1}$ and
$\overleftrightarrow{\ue_3 A^*_1}$.  Then
\[
 A_1A^*_1C^*_1\cdots C_1A_1\
 A^*_1C^*\
 A^*_2C^*_2\cdots C_2A_2\
 A^*_2C^*_2\cdots C_2A_2\
 CA_1
\]
is a closed chain containing $F$.  Since all its vertices
belong to $F$, it is also contained in $K$.  The
conclusion follows because, by the
proposition, this closed chain is equal to $\cF(\uS)$
for some non-degenerate self-similar $3$-system $\uS$.
\end{proof}
\end{minipage}
\begin{minipage}{.24\textwidth}
\centering
\def\x{2}%
\def\y{4.1}%
\def\u{2}%
\def\v{12}%
\def\e{1}%
\def\f{3}%
     \begin{tikzpicture}[scale=0.4] 
       \coordinate (B) at (20,0);
       \coordinate (C) at (10,17.3);
       \coordinate (MBC) at ($0.5*(B)+0.5*(C)$);
       \coordinate (M) at ($1/3*(B)+1/3*(C)$);
       \draw[] (M)--(MBC)--(C)--(M);
       \node[circle, inner sep=0.75pt, fill] at (C) {};
       \node[above] at (C) {$\uf_3$};
       \node[circle, inner sep=0.75pt, fill] at (MBC) {};
       \node[right] at (MBC) {$\uf_1$};
       \node[circle, inner sep=0.75pt, fill] at (M) {};
       \node[below] at ($(M)+(-0.7,-0.3)$) {$\uf_2$};
       \coordinate (A1) at ($\x/(2*\x+\y)*(B)+\y/(2*\x+\y)*(C)$); 
       \coordinate (A1e) at ($\y/(\x+2*\y)*(B)+\y/(\x+2*\y)*(C)$); 
       \coordinate (A2) at ($\u/(2*\u+\v)*(B)+\v/(2*\u+\v)*(C)$); 
       \coordinate (C2) at (${\u/(2*\u+\v-\e)}*(B)+{\v/(2*\u+\v-\e)}*(C)$); 
       \coordinate (A2e) at ($\v/(\u+2*\v)*(B)+\v/(\u+2*\v)*(C)$); 
       \coordinate (C2e) at (${\v/(\u+2*\v+\f)}*(B)+{(\v+\f)/(\u+2*\v+\f)}*(C)$); 
       \coordinate (Cc) at (${(\x/\y)/(\u/\v+\x/\y+1)}*(B)+{1/(\u/\v+\x/\y+1)}*(C)$);
       \coordinate (Ce) at (${(\y/\x)/(1+\y/\x+\v/\u)}*(B)+{(\v/\u)/(1+\y/\x+\v/\u)}*(C)$);
       \draw[thick] (A1)--(A1e);
       \draw[thick] (A2)--(A2e);
       \node[circle, inner sep=1pt, fill] at (A1) {};
       \node[left] at (A1) {$A_1$};
       \node[circle, inner sep=1pt, fill] at (A1e) {};
       \node[below] at (A1e) {$A^*_1$};
       \node[circle, inner sep=1pt, fill] at (A2) {};
       \node[left] at (A2) {$A_2$};
       \node[circle, inner sep=1pt, fill] at (A2e) {};
       \node[below] at (A2e) {$A^*_2$};
       \draw[thick, dashed] (A1)--(Cc);
       \node[circle, inner sep=0.75pt, fill] at (Cc) {};
       \node[right] at (Cc) {$C$};
       \draw[thick, dashed] (A1e)--(Ce);
       \node[circle, inner sep=0.75pt, fill] at (Ce) {};
       \node[right] at ($(Ce)+(-0.5,0.8)$) {$C^*$};
       \draw[thick, dotted] (A2)--(C2);
       \draw[thick, dotted] (A2e)--(C2e);
       \end{tikzpicture}
\end{minipage}

\medskip
In view of the considerations of Section \ref{sec:spectra},
the next result proves Theorem \ref{intro:thm:u,v,w}.

\begin{corollary}
\label{3sys:cor2}
Let $\uP^{(1)}$ and $\uP^{(2)}$ be proper
$3$-systems.  There exists a proper
$3$-system $\uP$ such that
$\cK(\uP)$ is the convex hull of
$\cK(\uP^{(1)})\cup\cK(\uP^{(2)})$.
\end{corollary}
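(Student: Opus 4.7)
Let $K$ denote the convex hull of $\cF(\uP^{(1)})\cup\cF(\uP^{(2)})$, which is the same as the convex hull of $\cK(\uP^{(1)})\cup\cK(\uP^{(2)})$ since each $\cK(\uP^{(j)})$ is itself the convex hull of $\cF(\uP^{(j)})$. The plan is to construct a proper $3$-system $\uP$ with
\[
 \cF(\uP^{(1)})\cup\cF(\uP^{(2)})\ \subseteq\ \cF(\uP)\ \subseteq\ K.
\]
Taking convex hulls yields $K\subseteq\cK(\uP)\subseteq K$, which is exactly the desired equality $\cK(\uP)=K$.

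To produce $\uP$, I would first discretize both systems. Applying Theorem \ref{self:thm}, for each integer $i\ge 1$ choose self-similar rigid $3$-systems $\uS_i^{(1)},\uS_i^{(2)}$ of mesh $1$ (hence non-degenerate, by definition of rigid system) with $\dist(\cF(\uP^{(j)}),\cF(\uS_i^{(j)}))\le 1/i$ for $j=1,2$. Corollary \ref{3sys:cor1} applied to $\uS_i^{(1)},\uS_i^{(2)}$ then supplies a non-degenerate self-similar $3$-system $\uS_i$ with
\[
 \cF(\uS_i^{(1)})\cup\cF(\uS_i^{(2)})\ \subseteq\ \cF(\uS_i)\ \subseteq\ K_i,
\]
where $K_i$ is the convex hull of $\cF(\uS_i^{(1)})\cup\cF(\uS_i^{(2)})$. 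Because forming the convex hull is $1$-Lipschitz for the Hausdorff distance on bounded subsets of $\bR^3$, we have $\dist(K_i,K)\to 0$ as $i\to\infty$.

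Next I would pass to the limit via Corollary \ref{composite:cor2} applied to the sequence $(\uS_i)_{i\ge 1}$: there exists a proper $3$-system $\uP$ with
\[
 \liminf_{i\to\infty}\cF(\uS_i)\ \subseteq\ \cF(\uP)\ \subseteq\ \limsup_{i\to\infty}\cF(\uS_i).
\]
For the upper inclusion: any accumulation point of a sequence $\ux_i\in\cF(\uS_i)\subseteq K_i$ lies in $K$, since $K$ is closed and $\dist(K_i,K)\to 0$. Hence $\cF(\uP)\subseteq K$. For the lower inclusion: given $\ux\in\cF(\uP^{(j)})$, the approximation property $\dist(\cF(\uP^{(j)}),\cF(\uS_i^{(j)}))\le 1/i$ provides points $\ux_i\in\cF(\uS_i^{(j)})\subseteq\cF(\uS_i)$ with $\ux_i\to\ux$, so $\ux\in\liminf\cF(\uS_i)\subseteq\cF(\uP)$. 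Both containments hold, completing the proof.

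The substantive work is carried by Corollary \ref{3sys:cor1}, whose proof exploits the explicit description of $\cF(\uS)$ for self-similar non-degenerate $3$-systems as closed chains in $\bar\Delta^{(3)}$; that is where the dimension restriction $n=3$ enters. The remainder of the argument above is a purely topological limiting procedure, and its only mild point is the Hausdorff-Lipschitz behaviour of the convex-hull operation, which is standard.
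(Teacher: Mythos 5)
Your proof is correct and follows essentially the same path as the paper's: approximate each $\uP^{(j)}$ by non-degenerate self-similar $3$-systems via Theorem~\ref{self:thm}, merge each pair using Corollary~\ref{3sys:cor1}, pass to the limit with Corollary~\ref{composite:cor2}, and use the $1$-Lipschitz behaviour of the convex-hull operation under Hausdorff distance to pinch $\cF(\uP)$ between $F$ and $K$. The only cosmetic difference is that you fix the mesh to $1$; the paper leaves it unspecified, but this changes nothing.
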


\begin{proof}
Let $K$ denote the convex hull of the set
$F:=\cF(\uP^{(1)})\cup\cF(\uP^{(2)})$.
Using Theorem \ref{self:thm}, we choose, for each $j=1,2$,
a sequence $(\uS^{(i,j)})_{i\ge 1}$ of non-degenerate
self-similar $3$-systems such that $\dist(\cF(\uP^{(j)}),
\cF(\uS^{(i,j)}))$ tends to $0$ as $i\to\infty$.
Then, using the preceding corollary, we select for each $i\ge 1$,
a non-degenerate self-similar $3$-system $\uS^{(i)}$ such that
\[
 F_i\subseteq \cF(\uS^{(i)})\subseteq K_i,
\]
where $K_i$ is the convex hull of
$F_i:=\cF(\uS^{(i,1)})\cup\cF(\uS^{(i,2)})$.
By Corollary \ref{composite:cor2}, there is a proper
$n$-system $\uP$ such that
\[
 \liminf_{i\to\infty} \cF(\uS^{(i)}) \subseteq
\cF(\uP) \subseteq \limsup_{i\to\infty} \cF(\uS^{(i)}).
\]
Since $\dist(K_i,K) \le \dist(F_i,F)$
tends to $0$ as $i\to\infty$, we also have
\[
 F = \liminf_{i\to\infty} F_i
   \subseteq \liminf_{i\to\infty} \cF(\uS^{(i)})
 \et
 \limsup_{i\to\infty} \cF(\uS^{(i)})
   \subseteq \limsup_{i\to\infty} K_i = K.
\]
This implies that $F\subseteq \cF(\uP)\subseteq K$ and so
$\cK(\uP)=K$ is the convex hull of $F$ or, equivalently,
of $\cK(\uP^{(1)})\cup\cK(\uP^{(2)})$.
\end{proof}

%
%

\section{Semi-algebraicity of the spectra in dimension 3}
\label{sec:semi}

In this section, we characterize the sets $\cK(\uS)$ attached to
non-degenerate self-similar $3$-systems $\uS$ in terms of the
strict elementary paths defined below, and we use this to prove the
semi-algebraicity of the spectra in dimension $3$.

\begin{definition}
\label{semi:def:elementary path}
An \emph{elementary path} in $\bar\Delta^{(3)}$
is a polygonal chain $AA^*B^*C^*C B A$
with
\begin{alignat*}{3}
 &A\in \bar{L},
  &&A^*\in \bar{L}^*\cap [A,\ue_2],
  &&B^*\in [A^*, \uf_1],\\   
 &C^*\in \bar{\Delta}^{(3)} \cap [B^*,\ue_3], \quad
  &&C \in \bar{\Delta}^{(3)} \cap [C^*,\ue_2], \quad
  &&B\in [A,\ue_3] \cap [C,\ue_1],
\end{alignat*}
where $\bar{L}=[\uf_2,\uf_3]$ and $\bar{L}^*=[\uf_2,\uf_1]$
denote the closures of $L$ and $L^*$, respectively.
The \emph{base} of such a path is the line
segment $[A,A^*]$. We say that the
elementary path is \emph{strict} when
\[
 A,B\in L, \quad A^*,B^*\in L^* \et C,C^*\in\Delta^{(3)}.
\]
See Figure \ref{semi:fig:elem} below for an illustration.
\end{definition}

\begin{figure}[ht]
     \begin{tikzpicture}[scale=0.4] 
       \coordinate (B) at (20,0);
       \coordinate (C) at (10,17.3);
       \coordinate (MBC) at ($0.5*(B)+0.5*(C)$);
       \coordinate (M) at ($1/3*(B)+1/3*(C)$);
       \draw[] (0,0)--(B)--(C)--(0,0);
       \draw[] (0,0)--(MBC);
       \draw[] (10,0)--(C);
       \node[left] at (0,0) {$\ue_1$};
       \node[right] at (B) {$\ue_2$};
       \node[circle, inner sep=0.75pt, fill] at (C) {};
       \node[above] at (C) {$\uf_3=\ue_3$};
       \node[circle, inner sep=0.75pt, fill] at (MBC) {};
       \node[right] at (MBC) {$\uf_1$};
       \node[circle, inner sep=0.75pt, fill] at (M) {};
       \node[below] at ($(M)+(-0.7,-0.3)$) {$\uf_2$};
       \coordinate (A2) at ($3/11*(B)+5/11*(C)$); 
       \coordinate (A3) at ($5/13*(B)+5/13*(C)$); 
       \coordinate (A9) at ($8/18*(B)+8/18*(C)$); 
       \coordinate (A10) at ($8/70*(B)+60/70*(C)$);
       \coordinate (A11) at ($22/84*(B)+60/84*(C)$);
       \coordinate (A12) at ($22/104*(B)+60/104*(C)$);
       \coordinate (PA12) at ($(A12)+(0,0.2)$);
       \draw[thick] (A2)--(A3)--(A9);
       \draw[dashed] (A3)--(B);
       \node[circle, inner sep=1pt, fill] at (A2) {};
       \node[below left] at (A2) {$A$};
       \node[circle, inner sep=1pt, fill] at (A3) {};
       \node[below] at (A3) {$A^*$};
       \draw[thick] (A9)--(A10)--(A11)--(A12)--(A2);
       \draw[dashed] (A10)--(C);
       \draw[dashed] (A11)--(B);
       \draw[dashed] (A12)--(0,0);
       \node[circle, inner sep=1pt, fill] at (A9) {};
       \node at (A9) [below]{$\ B^*$};
       \node[circle, inner sep=1pt, fill] at (A10) {};
       \node at (A10) [right]{$\ C^*$};
       \node[circle, inner sep=1pt, fill] at (A11) {};
       \node at (A11) [right]{$\ C$};
       \node[circle, inner sep=1pt, fill] at (A12) {};
       \node at (PA12) [left]{$B$};
       \end{tikzpicture}
\caption{A strict elementary path in $\bar\Delta^{(3)}$.}
\label{semi:fig:elem}
\end{figure}
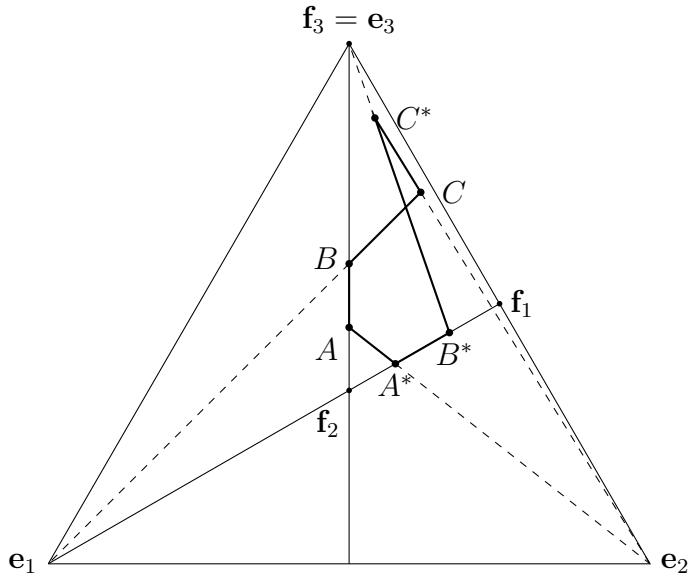

We start with the following observation.

\begin{lemma}
\label{semi:lemma}
Let $K$ be the convex hull of an elementary path
$\cE=AA^*B^*C^*CBA$ with base $AA^*$, and let
$V=\{A,B,C,A^*,B^*,C^*\}$ be the set of vertices of $\cE$.
Then there exists a proper $3$-system $\uP$ such
that $V\subseteq \cF(\uP)\subseteq K$ and thus
$\cK(\uP)=K$.  If $\cE$ is a strict elementary path,
we may choose $\uP$ to be self-similar and non-degenerate.
\end{lemma}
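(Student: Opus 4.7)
My plan is to prove the lemma in two stages: the strict case by an explicit construction, and the general case by approximation.

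\textbf{Strict case.} Assume $\cE$ is a strict elementary path, so $A,B\in L$, $A^*,B^*\in L^*$ and $C,C^*\in\Delta^{(3)}$. I will exhibit a closed chain $\cC$ in $\bar\Delta^{(3)}$ (in the sense of Definition~\ref{3sys:def:chains}) with $V\subseteq\cC\subseteq K$. By the converse direction of Proposition~\ref{3sys:prop:chains}, such a $\cC$ equals $\cF(\uS)$ for some non-degenerate self-similar (automatically proper) $3$-system $\uS$, which gives the required inclusions and $\cK(\uS)=K$.

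I construct $\cC$ as a closed chain with $s=2$ whose two $L$-vertices are $A$ and $B$; it is the concatenation of two simple chains. The first, from $A$ to $B$, uses parameters $g=2,\ h=1$ in the notation of Definition~\ref{3sys:def:chains} and has vertices $A,\,A^*,\,\widetilde C^*,\,B^*,\,C^*,\,C,\,B$. The single auxiliary vertex $\widetilde C^*\in\Delta^{(3)}$ is the intersection of the line through $A^*$ and $\ue_3$ with the line extending $[B^*,\ue_2]$ past $B^*$; the defining condition $B^*\in[A^*,\uf_1]$ of an elementary path (Definition~\ref{semi:def:elementary path}) forces this intersection to exist in $\Delta^{(3)}$, and combining it with the incidences $A^*\in[A,\ue_2]$ and $C^*\in[B^*,\ue_3]$ allows one to write $\widetilde C^*$ explicitly as a convex combination of $A$, $C$, $C^*$ with nonnegative coefficients, so $\widetilde C^*\in K$. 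The second simple chain, from $B$ to $A$, uses $g=h=1$ and has vertices $B,\,\widetilde A^*,\,\widetilde Y^*,\,\widetilde Y,\,A$; here $\widetilde A^*=L^*\cap[B,\ue_2]$ is uniquely determined and lies on $[A^*,B^*]\subseteq K$ by a short computation using $B\in[A,\ue_3]$. The remaining pair $(\widetilde Y^*,\widetilde Y)$ depends on one free parameter (the position of $\widetilde Y$ on the extension of $[A,\ue_1]$ past $A$); taking this parameter small places both vertices close to the boundary edges $[A,\widetilde A^*]$ and $[A,B]$ of $K$, and a direct barycentric check confirms they lie in $K$ for such parameters.

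\textbf{General case.} If $\cE$ is not strict, one or more vertices lie at $\uf_1,\uf_2$ or $\uf_3$. I approximate $\cE$ by a sequence of strict elementary paths $\cE_n$ with vertex sets $V_n\to V$ and convex hulls $K_n\to K$ in the Hausdorff metric; this is possible since strictness is an open condition on the defining parameters. The strict case produces proper $3$-systems $\uS_n$ with $V_n\subseteq\cF(\uS_n)\subseteq K_n$, and Corollary~\ref{composite:cor2} then furnishes a proper $3$-system $\uP$ with
\[
 \liminf_{n\to\infty}\cF(\uS_n)\ \subseteq\ \cF(\uP)\ \subseteq\ \limsup_{n\to\infty}\cF(\uS_n).
\]
The convergences $V_n\to V$ and $K_n\to K$ yield $V\subseteq\cF(\uP)\subseteq K$ and hence $\cK(\uP)=K$.

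The principal obstacle lies in the strict case, namely verifying that the forced auxiliary vertices $\widetilde C^*,\widetilde A^*,\widetilde Y^*,\widetilde Y$ all lie in $K$. The specific incidences built into Definition~\ref{semi:def:elementary path}, especially $B^*\in[A^*,\uf_1]$ and $B\in[A,\ue_3]\cap[C,\ue_1]$, are precisely those needed to make these barycentric computations go through; without them the closed chain would be forced out of $K$ through $\ue_1,\ue_2$ or $\uf_1$.
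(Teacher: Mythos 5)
Your overall strategy---exhibiting an explicit closed chain for the strict case and then treating the general case by approximation through Corollary~\ref{composite:cor2}---is the same as the paper's, and your approximation step is sound. The gap is in the strict-case construction: the closed chain you build need not lie in $K$.

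The clearest problem is with the second simple chain. Once you decide to start a simple chain at $B$, Definition~\ref{3sys:def:chains} forces its first vertex after $B$ to be $\widetilde A^*=L^*\cap[B,\ue_2]$; there is no freedom here, since that intersection is a single point. You claim $\widetilde A^*\in[A^*,B^*]$. The hypothesis $B\in[A,\ue_3]$ does give $x_1(\widetilde A^*)=x_1(B)/\bigl(2-3x_1(B)\bigr)\le x_1(A)/\bigl(2-3x_1(A)\bigr)=x_1(A^*)$, but nothing in the definition of an elementary path gives the other inequality $x_1(\widetilde A^*)\ge x_1(B^*)$: $B$ may lie much closer to $\ue_3$ than the preimage of $B^*$ under the projection $Q\mapsto L^*\cap[Q,\ue_2]$. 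Concretely, take $A=(\tfrac14,\tfrac14,\tfrac12)$, $A^*=(\tfrac15,\tfrac25,\tfrac25)$, $B^*=(\tfrac1{10},\tfrac9{20},\tfrac9{20})$, $C=C^*=(\tfrac1{100},\tfrac9{200},\tfrac{189}{200})$, and $B=(\tfrac1{23},\tfrac1{23},\tfrac{21}{23})$. One checks directly that this is a strict elementary path, while $\widetilde A^*=(\tfrac1{43},\tfrac{21}{43},\tfrac{21}{43})$ has $x_2(\widetilde A^*)=\tfrac{21}{43}>\tfrac{9}{20}=\max\{x_2(P):P\in V\}$, so $\widetilde A^*\notin K$. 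Your vertex $\widetilde C^*$ suffers from an analogous problem: it is the unique point of $[A^*,\ue_3]$ mapped to $B^*$ by projection toward $\ue_2$, and taking $C^*$ closer to $B^*$ (for instance $C=C^*=\tfrac{13}{20}B^*+\tfrac{7}{20}\ue_3$ in the same example, which still yields a strict elementary path) makes $K$ thin enough that $\widetilde C^*\notin K$. Either failure means $\cC\not\subseteq K$, so the resulting $\uS$ has $\cK(\uS)\supsetneq K$.

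The paper avoids both obstructions by inserting arbitrarily many intermediate points: a fine zigzag $C^*_1,A^*_2,\dots,C^*_m$ between $A^*$ and $B^*$ staying within any prescribed distance of $L^*$, and a fine zigzag $C_n,A_{n-1},\dots,C_1$ between $B$ and $A$ staying close to $L$. These stay inside $K$ because $[A^*,B^*]$ and $[B,A]$ are edges of $K$ on the boundary of $\bar\Delta^{(3)}$, so small perturbations into the interior remain in $K$. Equally important, the descent from $B$ back to $A$ is carried out inside the same simple chain, through $L$-points and nearby $\Delta^{(3)}$-points, instead of starting a new simple chain at $B$; this sidesteps the forced visit to $L^*\cap[B,\ue_2]$, which no amount of zigzagging can remove from a chain that begins at $B$.
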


\begin{proof}
If $\cE$ is a strict elementary path,
we may insert points $C^*_1,A^*_2,\dots,C^*_m$
between $A^*$ and $B^*$, and points
$C_n,A_{n-1},\dots,C_1$ between $B$ and $A$,
with $C^*_1,\dots,C^*_m$ sufficiently close to $L^*$
and $C_n,\dots,C_1$ sufficiently close to $L$,
to obtain a simple closed chain
\[
 A\, A^*\, C^*_1\, A^*_2\, \cdots\, C^*_m\, B^*\, C^*\,
 C\, B\, C_n\, A_{n-1}\, \cdots\, C_1\, A
\]
whose convex hull is also $K$.  By Proposition
\ref{3sys:prop:chains}, this polygonal chain
coincides with the set $\cF(\uS)$ attached to
a non-degenerate self-similar $3$-system $\uS$,
and so $V\subseteq \cF(\uS)\subseteq K$.

In general, there exists a sequence of strict elementary
paths $\cE_i=A_iA_i^*B_i^*C_i^*C_iB_iA_i$ ($i\ge 1$)
whose vertices $A_i,\, A_i^*,\,\dots$ converge
respectively to $A,\, A^*,\,\dots$ as $i\to\infty$.
For each $i\ge 1$, we choose a non-degenerate
self-similar $3$-system $\uS^{(i)}$ with
$V_i\subseteq \cF(\uS^{(i)})\subseteq K_i$ where $V_i$
is the set of vertices of $\cE_i$, and $K_i$ is
its convex hull.  Then, by Corollary \ref{composite:cor2},
there exists a proper $3$-system $\uP$ such that
$\cF(\uP)$ is contained between
\[
 V=\liminf_{i\to\infty} V_i
   \subseteq \liminf_{i\to\infty}\cF(\uS^{(i)})
 \et
 \limsup_{i\to\infty}\cF(\uS^{(i)})
   \subseteq \limsup_{i\to\infty} K_i=K,
\]
as required.
\end{proof}

\begin{proposition}
\label{semi:prop}
A subset of $\bar\Delta^{(3)}$ is equal to
$\cK(\uS)$ for some non-degenerate
self-similar $3$-system $\uS$ if and only if it is
the convex hull of a finite non-empty
union of strict elementary paths in $\bar\Delta^{(3)}$.
\end{proposition}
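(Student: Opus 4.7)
For direction $(\Leftarrow)$, suppose $K$ is the convex hull of a finite non-empty union $\cE_1 \cup \cdots \cup \cE_k$ of strict elementary paths. By Lemma \ref{semi:lemma}, each $\cE_i$ gives rise to a non-degenerate self-similar $3$-system $\uS_i$ with $\cK(\uS_i) = \mathrm{conv}(\cE_i)$. Applying Corollary \ref{3sys:cor1} iteratively to pairs of these systems, I obtain a single non-degenerate self-similar $3$-system $\uS$ whose $\cK(\uS)$ equals $\mathrm{conv}\bigl(\bigcup_i \cK(\uS_i)\bigr) = K$.

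For direction $(\Rightarrow)$, let $\uS$ be a non-degenerate self-similar $3$-system. By Proposition \ref{3sys:prop:chains}, $\cF(\uS)$ is a closed chain, which decomposes as a finite cyclic succession of simple chains $\gamma_1, \ldots, \gamma_s$ (finite because $\uS$ is self-similar). The plan is to construct a finite family of strict elementary paths inside $\cK(\uS)$ whose union contains every vertex of $\cF(\uS)$; since $\cK(\uS)$ is the convex hull of this vertex set, the convex hull of the union will then coincide with $\cK(\uS)$. Concretely, for each simple chain
\[
 \gamma = A\, A^*_1\, C^*_1\, \cdots\, A^*_g\, C^*_g\, C_h\, A_h\, \cdots\, C_1\, A_1
\]
and each $i \in \{1,\ldots,g\}$, I would define a strict elementary path $\cE^{\mathrm{up}}_i$ with $A$-vertex equal to $A$, $A^*$-vertex equal to $A^*_1$, $B^*$-vertex equal to $A^*_i$, $C^*$-vertex equal to $C^*_i$, and auxiliary vertices $C \in \Delta^{(3)} \cap [C^*_i,\ue_2]$ and $B \in L \cap [A,\ue_3] \cap [C,\ue_1]$ chosen inside $\cK(\uS)$. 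Symmetrically, for each $j \in \{1,\ldots,h\}$ I would construct $\cE^{\mathrm{dn}}_j$ based at $A_1$ rather than $A$, capturing the lower vertex $C_j$ by exchanging the roles of the upper and lower parts of the simple chain.

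The main obstacle is showing that the auxiliary vertices $C$ and $B$ can always be chosen inside $\cK(\uS)$ so that every vertex of each $\cE^{\mathrm{up}}_i$ (and $\cE^{\mathrm{dn}}_j$) lies in the correct open stratum ($L$, $L^*$, or $\Delta^{(3)}$) required by strictness. When $C^*_i$ happens to be an extreme point of $\cK(\uS)$, the segment $[C^*_i,\ue_2]$ may exit $\cK(\uS)$ at $C^*_i$ itself, preventing a strict choice of $C$. I expect to resolve this by using vertices from the neighboring simple chains $\gamma_{k-1}, \gamma_{k+1}$ to anchor the construction, exploiting the convexity of $\cK(\uS)$; in the residual cases a perturbation argument combined with Lemma \ref{semi:lemma} and Corollary \ref{3sys:cor1} should allow one to replace a problematic degenerate elementary path by a finite collection of strict ones whose joint convex hull matches the required portion of $\cK(\uS)$ exactly, thanks to the finiteness of vertex configurations under self-similarity.
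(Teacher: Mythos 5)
Your $(\Leftarrow)$ direction is exactly the paper's argument: Lemma \ref{semi:lemma} produces a non-degenerate self-similar $3$-system for each strict elementary path, and iterating Corollary \ref{3sys:cor1} combines them. That part is fine.

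For $(\Rightarrow)$, the high-level plan is right (decompose $\cF(\uS)$ into simple chains via Proposition \ref{3sys:prop:chains}, then produce strict elementary paths inside $\cK(\uS)$ covering every vertex), but the construction of the auxiliary vertices $C$ and $B$ is exactly what has to be done, and you never actually do it. You acknowledge the obstacle (the segment $[C^*_i,\ue_2]$ may exit $\cK(\uS)$ at $C^*_i$) and wave at a ``perturbation argument combined with Lemma \ref{semi:lemma} and Corollary \ref{3sys:cor1}'' without giving it; that is a genuine gap, not a detail to defer. The paper resolves it concretely. First, rather than anchoring each family of paths at the local base $(A,A^*_1)$ of the particular simple chain as you propose, it picks the \emph{global} base $A_0\in L\cap\cF(\uS)$ and $A^*_0\in L^*\cap\cF(\uS)$ closest to $\uf_2$, and notes that the single strict elementary path $A_0A^*_0A^*_gC^*_gC_hA_hA_0$ already has a convex hull $K_0$ containing every vertex of the simple chain \emph{except possibly} some intermediate $C^*_i$ or $C_i$. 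Second, for a protruding $C^*_i\notin K_0$ it does not search for $C$ on $[C^*_i,\ue_2]$ abstractly; it intersects the line $\overleftrightarrow{\ue_1 A_0}$ with the chain's own edge $[C^*_i,A^*_{i+1}]$, which lies in $\cF(\uS)\subset\cK(\uS)$ and is a subsegment of $[C^*_i,\ue_2]$, obtaining $\tC_i\in\cK(\uS)\cap\Delta^{(3)}$, and closes the path with $B=A_0$ (allowed, since the definition of a strict elementary path permits $B=A$). The global minimality of $A_0$ is what guarantees this intersection exists whenever $C^*_i\notin K_0$; with your local anchor $A$ the line $\overleftrightarrow{\ue_1 A}$ need not meet $[C^*_i,A^*_{i+1}]$ at all. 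A symmetric construction with $\overleftrightarrow{\ue_3 A^*_0}$ and $[A_{i+1},C_i]$ handles protruding $C_i$. Your draft leaves precisely this, the core of the argument, unproved.
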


\begin{proof}
If $\cE_1,\dots,\cE_s$ are strict elementary paths,
then the above lemma provides, for $i=1,\dots,s$,
a non-degenerate self-similar $3$-system $\uS^{(i)}$
such that $\cK(\uS^{(i)})$ is the convex hull $K_i$
of $\cE_i$, and Corollary \ref{3sys:cor1} implies
the existence of another non-degenerate self-similar
$3$-system $\uS$ such that $\cK(\uS)$ is the convex
hull of $K_1\cup\cdots\cup K_s$.  This shows that
the condition is sufficient.

Conversely, let $\uS$ be any non-degenerate self-similar
$3$-system.  By Proposition \ref{3sys:prop:chains}, the set
$\cF(\uS)$ is the union of finitely many simple chains
\begin{equation}
 \label{semi:eq:simple_chain}
 A A^*_1 C^*_1 \cdots A^*_g C^*_g C_h A_h \cdots C_1 A_1
\end{equation}
starting with $A\in L$ and $A^*_1\in L^*\cap[A,\ue_2]$.
Let $A_0$ (resp.\ $A^*_0$) denote the point of
$L\cap\cF(\uS)$ (resp.\ $L^*\cap\cF(\uS)$) which is
closest to $\uf_2$.  Then we have $A_0=A$
and $A^*_0=A^*_1$ for at least one of these simple
chains, and so $A^*_0\in L^*\cap[A_0,\ue_2]$.

Now, let $K$ be the convex hull of $A_0$, $A^*_0$
and of one of the simple chains
\eqref{semi:eq:simple_chain} composing $\cF(\uS)$.
We prove that $\cK(\uS)$ is the convex hull
of a finite union of strict elementary paths by
showing this for $K$.  First, we note that
$K$ contains the strict elementary path
$\cE=A_0A^*_0A^*_gC^*_gC_hA_hA_0$.  Moreover
the convex hull $K_0$ of $\cE$ contains $A_0$,
$A^*_0$ and all
vertices of the given simple chain except possibly
some points $C^*_i$ with $1\le i <g$ and some
points $C_i$ with $1\le i< h$.  Suppose that
$C^*_i\notin K_0$ for
some index $i$ with $1\le i <g$.  Then the line
$\overleftrightarrow{\ue_1 A_0}$ intersects the
segment $[C^*_i,A^*_{i+1}]$ in a point $\tC_i$,
and we obtain a strict elementary path
$A_0A^*_0A^*_iC^*_i\tC_iA_0$ contained in $\cK(\uP)$
(because $\tC_i\in\cF(\uP)$), and containing
$C^*_i$ (see Figure \ref{semi:fig:pointe*}).  Similarly, if
$C_i\notin K_0$ for some $i$ with $1\le i <h$,
then $\overleftrightarrow{\ue_3 A^*_0}$  
intersects $[A_{i+1},C_i]$
in a point $\tC^*_i$ yielding a strict elementary path
$A_0A^*_0\tC^*_iC_iA_iA_0$ both contained in $\cK(\uS)$
and containing $C_i$ (see Figure \ref{semi:fig:pointe}).
Thus $K$ is the convex hull of a finite union of strict
elementary paths.
\end{proof}

\noindent
\begin{minipage}{.5\textwidth}
  \centering
     \begin{tikzpicture}[scale=0.32] 
       \coordinate (B) at (20,0);
       \coordinate (C) at (10,17.3);
       \coordinate (MBC) at ($0.5*(B)+0.5*(C)$);
       \draw[] (0,0)--(B)--(C)--(0,0);
       \draw[] (0,0)--(MBC);
       \draw[] (10,0)--(C);
       \node[left] at (0,0) {$\ue_1$};
       \node[right] at (B) {$\ue_2$};
       \node[above] at (C) {$\ue_3$};
       \coordinate (A2) at ($3/11*(B)+5/11*(C)$); 
       \coordinate (A3) at ($5/13*(B)+5/13*(C)$); 
       \coordinate (A9) at ($6/14*(B)+6/14*(C)$); 
       \coordinate (A10) at ($6/28*(B)+20/28*(C)$);
       \coordinate (A11) at ($20/42*(B)+20/42*(C)$);
       \coordinate (A12) at ($12/34*(B)+20/34*(C)$);
       \draw[thick] (A2)--(A3)--(A9)--(A10)--(A12)--(A2);
       \draw[dashed] (0,0)--(A2);
       \draw[dashed] (A3)--(B);
       \draw[dashed] (A10)--(C);
       \draw[dashed] (A11)--(B);
       \draw[thick, dotted] (A12)--(A11);
       \node[circle, inner sep=1pt, fill] at (A2) {};
       \node[above left] at (A2) {$A_0$};
       \node[circle, inner sep=1pt, fill] at (A3) {};
       \node[below] at (A3) {$A^*_0\ $};
       \node[circle, inner sep=1pt, fill] at (A9) {};
       \node at (A9) [below]{$\ \ A^*_i$};
       \node[circle, inner sep=1pt, fill] at (A10) {};
       \node at (A10) [above right]{$\ C^*_i$};
       \node[circle, inner sep=1pt, fill] at (A11) {};
       \node[circle, inner sep=0.5pt, fill, color=white] at (A11) {};
       \node at (A11) [right]{$\ A^*_{i+1}$};
       \node[circle, inner sep=1pt, fill] at (A12) {};
       \node at (A12) [above right]{$\ \tC_i$};
       \end{tikzpicture}
  \captionof{figure}{}
  \label{semi:fig:pointe*}
\end{minipage}%
\begin{minipage}{.49\textwidth}
  \centering
     \begin{tikzpicture}[scale=0.32] 
       \coordinate (B) at (20,0);
       \coordinate (C) at (10,17.3);
       \coordinate (MBC) at ($0.5*(B)+0.5*(C)$);
       \draw[] (0,0)--(B)--(C)--(0,0);
       \draw[] (0,0)--(MBC);
       \draw[] (10,0)--(C);
       \node[left] at (0,0) {$\ue_1$};
       \node[right] at (B) {$\ue_2$};
       \node[above] at (C) {$\ue_3$};
       \coordinate (A2) at ($3/11*(B)+5/11*(C)$); 
       \coordinate (A3) at ($5/13*(B)+5/13*(C)$); 
       \coordinate (A4) at ($3/13*(B)+7/13*(C)$); 
       \coordinate (PA4) at ($(A4)+(0,0.5)$);
       \coordinate (A5) at ($3/11.2*(B)+7/11.2*(C)$);
       \coordinate (A6) at ($1.2/9.4*(B)+7/9.4*(C)$);
       \coordinate (PA6) at ($(A6)+(0.25,-0.7)$);
       \coordinate (A7) at ($2/10.2*(B)+7/10.2*(C)$);
       \coordinate (PA7) at ($(A7)+(-0.25,0.6)$);
       \draw[thick] (A3)--(A2)--(A4)--(A5)--(A7)--(A3);
       \draw[dashed] (0,0)--(A4);
       \draw[dashed] (A3)--(B);
       \draw[dashed] (A7)--(C);
       \draw[dashed] (A5)--(B);
       \draw[thick, dotted] (A7)--(A6);
       \node[circle, inner sep=1pt, fill] at (A2) {};
       \node[below left] at (A2) {$A_0$};
       \node[circle, inner sep=1pt, fill] at (A3) {};
       \node[below] at (A3) {$A^*_0\ $};
       \node[circle, inner sep=1pt, fill] at (A4) {};
       \node at (PA4) [left]{$A_i$};
       \node[circle, inner sep=1pt, fill] at (A5) {};
       \node at (A5) [right]{$C_i$};
       \node[circle, inner sep=1pt, fill] at (A6) {};
       \node[circle, inner sep=0.5pt, fill, color=white] at (A6) {};
       \node at (PA6) [left]{$A_{i+1}$};
       \node[circle, inner sep=1pt, fill] at (A7) {};
       \node at (PA7) [right]{$\tC^*_i$};
       \end{tikzpicture}
  \captionof{figure}{}
  \label{semi:fig:pointe}
\end{minipage}

\medskip
We now turn to the consequences in terms of spectra.

\begin{theorem}
\label{semi:thm}
Let $T=(T_1,\dots,T_m)$ be a linear map
from $\bR^3$ to $\bR^m$ for some integer $m\ge 1$,
and let $\uy=(y_1,\dots,y_m)\in \bR^m$.
Then, $\uy=\mu_T(\uS)$ for a non-degenerate
self-similar $3$-system $\uS$ if and only if there
exist strict elementary paths $\cE_1,\dots,\cE_m$
in $\bar\Delta^{(3)}$ such that
\begin{equation}
\label{semi:thm:eq1}
 y_j \le \inf(T_j(\cE_i)) \ (1\le i,j\le m)
   \et
 y_j = \inf(T_j(\cE_j)) \ (1\le j\le m).
\end{equation}
Moreover, $\uy=\mu_T(\uP)$ for a proper $3$-system
$\uP$ if and only if there
exist elementary paths $\cE_1,\dots,\cE_m$
in $\bar\Delta^{(3)}$ with the same property.
\end{theorem}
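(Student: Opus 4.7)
The proof rests on Proposition \ref{semi:prop}, which expresses each $\cK(\uS)$ as the convex hull of a finite union of strict elementary paths, together with Lemma \ref{semi:lemma} (elementary paths come from $3$-systems) and Corollaries \ref{3sys:cor1} and \ref{3sys:cor2} (unions of such $3$-systems can be amalgamated). A key observation used repeatedly is that each $T_j$ is linear, so the infimum of $T_j$ on a convex hull equals the infimum on the generating set; in particular, on a polygonal chain such an infimum is attained at a vertex.

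\textbf{Self-similar case.} For the forward implication, suppose $\uy=\mu_T(\uS)$ for some non-degenerate self-similar $3$-system $\uS$. By Proposition \ref{semi:prop}, $\cK(\uS)$ is the convex hull of a finite union of strict elementary paths $\cE_1',\dots,\cE_s'$, and therefore
\[
 y_j=\inf T_j(\cK(\uS))=\min_{1\le i\le s}\inf T_j(\cE_i')
 \quad (1\le j\le m).
\]
For each $j$, pick $i(j)$ realizing this minimum and set $\cE_j:=\cE_{i(j)}'$; then \eqref{semi:thm:eq1} holds. For the converse, given strict elementary paths $\cE_1,\dots,\cE_m$ satisfying \eqref{semi:thm:eq1}, Lemma \ref{semi:lemma} provides non-degenerate self-similar $3$-systems $\uS^{(1)},\dots,\uS^{(m)}$ whose respective $\cK(\uS^{(i)})=K_i$ are the convex hulls of the $\cE_i$. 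Iterating Corollary \ref{3sys:cor1}, we obtain a single non-degenerate self-similar $\uS$ with $\cK(\uS)$ equal to the convex hull $K$ of $K_1\cup\dots\cup K_m$, and then
\[
 \mu_T(\uS)_j=\inf T_j(K)
   =\min_i \inf T_j(\cE_i)=y_j,
\]
using $y_j\le \inf T_j(\cE_i)$ with equality at $i=j$.

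\textbf{Proper case.} The backward implication follows the same pattern: given elementary paths $\cE_1,\dots,\cE_m$ satisfying \eqref{semi:thm:eq1}, Lemma \ref{semi:lemma} yields proper $3$-systems $\uP^{(i)}$ with $\cK(\uP^{(i)})$ the convex hull of $\cE_i$, and Corollary \ref{3sys:cor2} (iterated) produces a proper $3$-system $\uP$ whose $\cK(\uP)$ is the convex hull of $\cK(\uP^{(1)})\cup\dots\cup \cK(\uP^{(m)})$, giving $\mu_T(\uP)=\uy$ by the same infimum computation.

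\textbf{Proper case, forward direction (main obstacle).} Assume $\uy=\mu_T(\uP)$ for some proper $3$-system $\uP$. By Theorem \ref{self:thm}, choose a sequence of non-degenerate self-similar integral $3$-systems $\uS^{(k)}$ with $\dist(\cF(\uP),\cF(\uS^{(k)}))\to 0$; then $\mu_T(\uS^{(k)})\to \mu_T(\uP)=\uy$. The self-similar case provides strict elementary paths $\cE_j^{(k)}$ $(1\le j\le m)$ realizing $\mu_T(\uS^{(k)})$ in the sense of \eqref{semi:thm:eq1}. Each strict elementary path is specified by six vertices $(A,A^*,B^*,C^*,C,B)$ in the compact triangle $\bar\Delta^{(3)}$, so by a diagonal extraction we may pass to a subsequence along which every vertex of every $\cE_j^{(k)}$ converges. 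The defining conditions in Definition \ref{semi:def:elementary path}—membership in $\bar L$, $\bar L^*$, $\bar\Delta^{(3)}$, and in closed line segments $[X,\ue_i]$—are all closed, so the limits $\cE_1,\dots,\cE_m$ are (not necessarily strict) elementary paths. Since $T_j$ is linear and each $\cE_i^{(k)}$ is a polygonal chain, $\inf T_j(\cE_i^{(k)})$ equals $T_j$ evaluated at one of the six vertices, so vertex convergence yields $\inf T_j(\cE_i^{(k)})\to \inf T_j(\cE_i)$. Passing to the limit in the inequalities for $\uS^{(k)}$ then gives \eqref{semi:thm:eq1} for $\uy$ and $\cE_1,\dots,\cE_m$. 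The only delicate point is verifying that all the incidence constraints defining elementary paths are preserved under limits, which is precisely why we allowed non-strict elementary paths in this half of the statement.
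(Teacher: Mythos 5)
Your proposal is correct and follows essentially the same route as the paper: realize $\cK(\uS)$ as a convex hull of strict elementary paths via Proposition \ref{semi:prop} for the self-similar direction, use Lemma \ref{semi:lemma} with Corollary \ref{3sys:cor2} for the proper backward direction, and pass to limits of strict elementary paths via Corollary \ref{self:cor} (you invoke Theorem \ref{self:thm} directly, but the intermediate step you state is exactly the content of Corollary \ref{self:cor}) plus compactness of $\bar\Delta^{(3)}$ for the proper forward direction. The only cosmetic deviation is that in the self-similar backward step you inline the argument of Proposition \ref{semi:prop} (Lemma \ref{semi:lemma} plus iterated Corollary \ref{3sys:cor1}) rather than citing the proposition itself; the result is identical.
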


\begin{proof}
Suppose first that the conditions \eqref{semi:thm:eq1}
hold for some choice of elementary paths $\cE_1,\dots,\cE_m$.
Then, we have $\uy=\inf T(K)$ where $K$ is
the convex hull of $\cE_1\cup\cdots\cup\cE_m$.
If $\cE_1,\dots,\cE_m$ are strict, the preceding
proposition gives $K=\cK(\uS)$ for a non-degenerate
self-similar $3$-system $\uS$, and so $\uy=\mu_T(\uS)$.
In the general case, Lemma \ref{semi:lemma} shows that,
for $i=1,\dots,m$, the convex hull of $\cE_i$ is equal
to $\cK(\uP_i)$ for some proper $3$-system $\uP_i$.
By Corollary \ref{3sys:cor2}, this implies that $K=\cK(\uP)$
for a proper $3$-system $\uP$, and then $\uy=\mu_T(\uP)$.

Suppose now that $\uy=\mu_T(\uS)$ for a non-degenerate
self-similar $3$-system $\uS$.  By the preceding
proposition, $\cK(\uS)$ is the convex hull of a union
of strict elementary paths $\tcE_1,\dots,\tcE_s$.
For each $j=1,\dots,m$, we have
$y_j=\inf T_j(\tcE_1\cup\cdots\cup\tcE_s)$, thus
$y_j=\inf T_j(\cE_j)$ for at least one path $\cE_j$
among these.  Then $\cE_1,\dots,\cE_m$ fulfill
the conditions \eqref{semi:thm:eq1}.

Finally, suppose that $\uy=\mu_T(\uP)$ for a proper
$3$-system $\uP$.  By Corollary \ref{self:cor},
we have $\uy=\lim_{\ell\to\infty} \mu_T(\uS^{(\ell)})$
for a sequence of non-degenerate self-similar
$3$-systems $(\uS^{(\ell)})_{\ell\ge 1}$.  By the above,
this means that, for each $\ell\ge 1$, there exist
strict elementary paths $\cE_1^{(\ell)},\dots,\cE_m^{(\ell)}$
such that $y_j\le \inf T_j(\cE_i^{(\ell)})$ for each
$i,j=1,\dots,m$, with equality when $i=j$.  Write
\[
 \cE_i^{(\ell)}
  = A_i^{(\ell)} {A_i^*}^{(\ell)} {B_i^*}^{(\ell)}
    {C_i^*}^{(\ell)} C_i^{(\ell)} B_i^{(\ell)}
    A_i^{(\ell)}
 \quad
 (1\le i\le m,\ \ell\ge 1).
\]
Since the vertices of the paths $\cE_i^{(\ell)}$ belong
to the compact set $\bar\Delta^{(3)}$, there exist
integers $1\le \ell_1<\ell_2<\cdots$ such that
$A_i^{(\ell_j)}, {A_i^*}^{(\ell_j)}, \dots$ converge
respectively to points $A_i, A_i^*, \dots$ in
$\bar\Delta^{(3)}$ as $j\to\infty$, for each $i=1,\dots,m$.  Then
$\cE_i=A_iA^*_iB^*_iC^*_iC_iB_iA_i$ ($1\le i\le m$)
are elementary paths which, by continuity, fulfill
the conditions \eqref{semi:thm:eq1}
\end{proof}

As a consequence, we prove Theorem \ref{intro:thm:semi}
in the following stronger form.

\begin{corollary}
\label{3sys:cor:spectrum}
Let $T=(T_1,\dots,T_m)$ be as in Theorem \ref{semi:thm},
and let $\cS$ denote the set of all points $\mu_T(\uS)$
where $\uS$ runs through the non-degenerate self-similar
$3$-systems. Then both $\image^*(\mu_T)$ and $\cS$ are
semi-algebraic subsets of $\bR^m$.
\end{corollary}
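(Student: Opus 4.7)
The plan is to deduce the corollary directly from Theorem~\ref{semi:thm} by quantifier elimination (Tarski--Seidenberg). The theorem characterizes membership in $\cS$ (resp.\ in $\image^*(\mu_T)$) as the existence of an $m$-tuple of strict (resp.\ arbitrary) elementary paths $\cE_1,\dots,\cE_m$ in $\bar\Delta^{(3)}$ satisfying the inequalities \eqref{semi:thm:eq1}. So it is enough to exhibit a semi-algebraic subset of some $\bR^m\times\bR^N$ whose projection to $\bR^m$ is $\cS$ (resp.\ $\image^*(\mu_T)$).

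First, I would parametrize elementary paths. An elementary path $AA^*B^*C^*CBA$ is determined by its six vertices, and each vertex belongs, by Definition~\ref{semi:def:elementary path}, to a line segment whose endpoints are either vertices of $\bar\Delta^{(3)}$, fixed points $\uf_1,\uf_2,\uf_3$, or previously defined vertices of the path. Writing each vertex as $(1-t)X + tY$ with $t\in[0,1]$ for the appropriate $X,Y$, the coordinates of all six vertices become polynomial (in fact bilinear) functions of a finite set of real parameters, and the constraints defining an elementary path become finitely many polynomial equalities and inequalities among those parameters. Hence the set $P\subseteq\bR^N$ of parameter tuples corresponding to elementary paths is semi-algebraic. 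The set $P^{\mathrm{str}}\subseteq P$ of tuples corresponding to strict elementary paths is obtained by replacing certain weak inequalities by strict ones, so it is semi-algebraic as well.

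Second, since each $T_j$ is linear and $\cE_i$ is a polygonal chain, the infimum $\inf T_j(\cE_i)$ is the minimum of $T_j$ taken over the six vertices of $\cE_i$, each of which is a polynomial function of the parameters of $\cE_i$. The condition $y_j\le \inf T_j(\cE_i)$ thus translates into six polynomial inequalities in $\uy$ and the parameters of $\cE_i$. The condition $y_j=\inf T_j(\cE_j)$ is then equivalent to the above six inequalities for $\cE_j$ together with the disjunction over the six vertices $v$ of $\cE_j$ of the opposite inequality $y_j\ge T_j(v)$. Consequently, the locus $R\subseteq\bR^m\times P^m$ (resp.\ $R^{\mathrm{str}}\subseteq\bR^m\times(P^{\mathrm{str}})^m$) of tuples $(\uy,\cE_1,\dots,\cE_m)$ fulfilling \eqref{semi:thm:eq1} is a finite Boolean combination of polynomial inequalities, hence semi-algebraic.

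Finally, Theorem~\ref{semi:thm} identifies $\image^*(\mu_T)$ with the image of $R$ under the projection $\bR^m\times P^m\to\bR^m$, and $\cS$ with the image of $R^{\mathrm{str}}$ under the analogous projection. The Tarski--Seidenberg theorem then yields that both $\image^*(\mu_T)$ and $\cS$ are semi-algebraic subsets of $\bR^m$. I do not foresee a genuine obstacle: the only nontrivial input is Theorem~\ref{semi:thm}, which reduces the infinite-dimensional problem (quantifying over $3$-systems) to a finite-dimensional semi-algebraic question; what remains is bookkeeping to make the parametrization of elementary paths explicit.
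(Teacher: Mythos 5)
Your argument is correct and follows the same route as the paper: use Theorem~\ref{semi:thm} to characterize membership in $\image^*(\mu_T)$ (resp.\ $\cS$) by the existence of a tuple of (strict) elementary paths satisfying finitely many polynomial inequalities together with $\uy$, then project and invoke Tarski--Seidenberg. The only cosmetic difference is that you introduce an auxiliary layer of barycentric parameters $t$ for the vertices of each $\cE_i$, whereas the paper works directly with the $18$ coordinates of the six vertices (so $19m$ variables in total), encoding the definition of an elementary path as polynomial constraints on those coordinates; both parametrizations yield a semi-algebraic preimage, so the conclusion is the same.
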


\begin{proof}
By Theorem \ref{semi:thm}, a point
$\uy=(y_1,\dots,y_m)$ of $\bR^m$ belongs to
$\image^*(\mu_T)$ (resp.\ $\cS$) if and only if,
for each $i=1,\dots,m$, there exists an
elementary path (resp.\ a strict elementary path)
$\cE_i=A_iA^*_iB^*_iC^*_iC_iB_iA_i$
satisfying
\[
 \max\{ T_j(A_i), T_j(B_i), T_j(C_i), T_j(A^*_i),
        T_j(B^*_i), T_j(C^*_i)\}
 \le y_j
\]
for $j=1,\dots,m$, with equality for $j=i$. We view
this as a system of $m^2$ inequalities in the $18m$
coordinates of the $6m$ points $A_1,A_2,\dots,C^*_m$
and in the $m$ numbers $y_1,\dots,y_m$. For each
$i=1,\dots,m$, the condition that $\cE_i$ is an
elementary path (resp.\ a strict elementary path)
also translates into a system of
inequalities for the coordinates of its vertices
$A_i,B_i,\dots,C^*_i$.  All together these conditions
define a semi-algebraic subset of $\bR^{19m}$ of which
$\image^*(\mu_T)$ (resp.\ $\cS$) is the image under
the projection to the last $m$ coordinates.
By the Tarski-Seidenberg theorem \cite{Se1954},
the latter set is therefore a semi-algebraic
subset of $\bR^m$.
\end{proof}

%
%

\section{A special case}
\label{sec:six}

We now turn to the spectrum of the six
exponents $\phibot_1,\dots,\phitop_3$ in dimension $n=3$.
We first describe it in geometric terms, and
then as a semi-algebraic subset
of $\bR^6$. We conclude by proving its topological
property stated in Theorem \ref{intro:thm:S}.
For convenience, we denote by
\[
 \pi_1\colon
   \bar\Delta^{(3)}\longrightarrow\bar{L}
 \et
 \pi_3\colon
   \bar\Delta^{(3)}\setminus\{\ue_3\}
      \longrightarrow\bar{L}^*
\]
the projection operators with respective centers
$\ue_1$ and $\ue_3$, so that we have $\pi_1(A)\in
[A,\ue_1]$ and $A\in [\pi_3(A),\ue_3]$ for any
point $A$ in the appropriate domain.  We also denote
by $x_i$ the $i$-th coordinate function for $i=1,2,3$.
The following geometric observation is crucial
for what follows.

\begin{lemma}
\label{six:lemma:E}
Let $\cE=AA^*B^*C^*CBA$ be an elementary path
with base $AA^*$ in $\bar\Delta^{(3)}$.  Then, we have
\begin{align*}
 \inf \cE
  &= \big( x_1(C),\, \min\{x_2(C^*),x_2(B)\},\, x_3(A^*)\big),\\
 \sup \cE
  &= \big( x_1(A),\, \max\{x_2(B^*),x_2(C)\},\, x_3(C^*)\big).
\end{align*}
\end{lemma}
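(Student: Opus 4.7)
The plan is to reduce the computation to a vertex count. Each coordinate function $x_i$ is affine on every line segment of the polygonal chain $\cE$, so the coordinate-wise infimum and supremum of the points of $\cE$ are attained at its vertices, that is, in the set $\{A, A^*, B^*, C^*, C, B\}$. It therefore suffices, for each $i\in\{1,2,3\}$, to identify at which vertex $x_i$ achieves its minimum and its maximum as one traverses the closed cycle $A\to A^*\to B^*\to C^*\to C\to B\to A$.

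The key observation is that every directed edge of this cycle points toward one of the four anchors $\ue_1,\ue_2,\ue_3$, or $\uf_1=(\ue_2+\ue_3)/2$, and this determines the sign of the variation of each coordinate along the edge: moving toward $\ue_k$ increases $x_k$ and decreases the other two coordinates, while moving toward $\uf_1$ decreases $x_1$ and increases $x_2$ and $x_3$. Straight from Definition~\ref{semi:def:elementary path}, the edges $A\to A^*$ and $C^*\to C$ point toward $\ue_2$, the edge $A^*\to B^*$ toward $\uf_1$, the edge $B^*\to C^*$ toward $\ue_3$, the edge $C\to B$ toward $\ue_1$, and the edge $B\to A$ away from $\ue_3$ (since $B\in[A,\ue_3]$).

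With this table in hand, the monotonicity patterns around the cycle can be read off directly. The values of $x_1$ decrease along the first four edges and increase along the last two, so the minimum is $x_1(C)$ and the maximum is $x_1(A)$. The values of $x_3$ increase only on $A^*\to B^*\to C^*$ and decrease on each of the remaining four edges, so the minimum is $x_3(A^*)$ and the maximum is $x_3(C^*)$. Finally, $x_2$ follows the sign pattern $\uparrow\uparrow\downarrow\uparrow\downarrow\uparrow$ around the cycle, which exhibits local maxima precisely at $B^*$ and $C$ and local minima precisely at $C^*$ and $B$; the global maximum and minimum of $x_2$ on $\cE$ are therefore $\max\{x_2(B^*),x_2(C)\}$ and $\min\{x_2(C^*),x_2(B)\}$, as claimed.

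No step is genuinely difficult: the argument is a systematic check of six edges against three coordinates. The only point requiring a small amount of care is the handling of degenerate configurations in which some consecutive vertices coincide and the corresponding edge collapses to a point; since the monotonicity statements are weak inequalities, these cases are absorbed into the general argument without modification.
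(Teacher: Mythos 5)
Your proof is correct and follows essentially the same approach as the paper: both arguments reduce to reading off the monotonicity of $x_1,x_2,x_3$ along each edge of the polygonal chain from the direction in which that edge points, and then locate the extrema at the resulting turning points. One small imprecision worth flagging: the blanket claim that moving toward $\uf_1$ increases $x_3$ is false in general in $\bar\Delta^{(3)}$ (it fails wherever $x_3>1/2$); it does hold on the edge $[A^*,B^*]\subseteq\bar L^*$, where $x_2=x_3\le 1/2$, which is the only edge to which you apply it, so the conclusion stands.
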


\begin{proof}
The level curves of the restriction of $x_1$ to the triangle
with vertices $\ue_1,\ue_2,\ue_3$ are line segments parallel
to $[\ue_2,\ue_3]$, with values increasing from $0$
on $[\ue_2,\ue_3]$ to $1$ on $\{\ue_1\}=[\ue_1,\ue_1]$.
In view of the slopes of the edges composing
$\cE$ (see Figure \ref{semi:fig:elem}),
we find that
\[
 x_1(A)\ge x_1(A^*)\ge x_1(B^*)\ge x_1(C^*)\ge x_1(C)
 \et
 x_1(A)\ge x_1(B)\ge x_1(C).
\]
So, the minimum of $x_1$ on $\cE$ is achieved
at the point $C$, and its maximum at the point $A$.
Similarly, the minimum of $x_3$ on $\cE$ is achieved
at $A^*$, and its maximum at $C^*$.
Finally, the minimum of $x_2$ on $\cE$ is achieved
at $B$ or at $C^*$, and its maximum at $B^*$ or at $C$.
\end{proof}

Theorem \ref{semi:thm} shows that one can realize
any point in the spectrum of $(\phibot_1,\dots,\phitop_3)$
using six elementary paths.  The next result together with
its dual shows more precisely that only two elementary paths
suffice.

\begin{proposition}
\label{six:prop:phibot1}
Let $(\pbot_1, \pbot_2, \pbot_3, \ptop_1,
\ptop_2, \ptop_3) \in \bR^6$.
There exists a proper $3$-system $\uP$ such that
\begin{equation}
\label{six:prop:phibot1:eq1}
 \begin{aligned}
 &\phitop_1(\uP)=\ptop_1, \
 \phibot_3(\uP)=\pbot_3, \
 \phibot_1(\uP)=\pbot_1, \
 \phitop_2(\uP)=\ptop_2,\\
 &\phibot_2(\uP)\ge \pbot_2, \
 \phitop_3(\uP)\le \ptop_3
 \end{aligned}
\end{equation}
if and only if there exists an elementary path
$\cE=AA^*B^*C^*CBA$ satisfying
\begin{equation}
\label{six:prop:phibot1:eq2}
 \begin{aligned}
 &x_1(A)=\ptop_1, \
  x_3(A^*)=\pbot_3, \
  x_2(B^*)=\ptop_2, \
  x_1(C)=\pbot_1,\\
 &x_2(C)\le \ptop_2, \
  x_2(B)\ge \pbot_2, \
  x_2(C^*)\ge \pbot_2, \
  x_3(C^*)\le \ptop_3,
 \end{aligned}
\end{equation}
and at least one of the two conditions $x_2(C)=\ptop_2$
or $B=A$, and, in the case where $B^*=\uf_1$,
the conditions $B=A$ and $C^*=C$.  Such an elementary
path, when it exists, is unique.
\end{proposition}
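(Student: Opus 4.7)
My approach is to split the proof into three parts. For the \emph{sufficiency} direction, suppose $\cE=AA^*B^*C^*CBA$ is an elementary path satisfying \eqref{six:prop:phibot1:eq2} and the extra dichotomies. By Lemma~\ref{semi:lemma}, there is a proper $3$-system $\uP$ with $V\subseteq\cF(\uP)\subseteq K$, where $V$ and $K$ are the vertex set and convex hull of $\cE$, so that $\cK(\uP)=K$. Substituting into Lemma~\ref{six:lemma:E} the equalities and inequalities listed in \eqref{six:prop:phibot1:eq2} then yields all six conditions of \eqref{six:prop:phibot1:eq1}; for instance, $\phitop_2(\uP)=\max\{x_2(B^*),x_2(C)\}=\ptop_2$ because $x_2(B^*)=\ptop_2$ and $x_2(C)\le\ptop_2$, while $\phibot_2(\uP)=\min\{x_2(C^*),x_2(B)\}\ge\pbot_2$ follows from the corresponding lower bounds on $x_2(C^*)$ and $x_2(B)$.

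For the harder \emph{necessity} direction, starting from $\uP$ satisfying \eqref{six:prop:phibot1:eq1}, I build the vertices of $\cE$ from the six values. The vertex $A$ is forced to be $(\ptop_1,\ptop_1,1-2\ptop_1)$ by $A\in\bar L$ and $x_1(A)=\ptop_1$; then $A^*$ is the unique point of $\bar L^*\cap[A,\ue_2]$ with $x_3=\pbot_3$; $B^*$ is the unique point of $[A^*,\uf_1]$ with $x_2=\ptop_2$; $C$ is chosen with $x_1(C)=\pbot_1$ in accordance with the dichotomy ($x_2(C)=\ptop_2$, or else $B=A$); finally $B$ is the intersection $[A,\ue_3]\cap[C,\ue_1]$ and $C^*$ is the intersection of the line through $B^*,\ue_3$ with the line through $C,\ue_2$. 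To justify that this candidate configuration is really induced by $\uP$, I would invoke Theorem~\ref{semi:thm} with the linear map $T\colon\bR^3\to\bR^6$ given by $T(\ux)=(x_1,x_2,x_3,-x_1,-x_2,-x_3)$: this provides six elementary paths realizing the bounds of \eqref{six:prop:phibot1:eq1}, and by the uniqueness of the extremal vertices (Lemma~\ref{six:lemma:E}), their key vertices match the ones constructed above, which is enough to assemble the unified path $\cE$.

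The uniqueness statement then follows because each vertex of $\cE$ is pinned down by a coordinate equality together with incidence with one of the canonical lines $\bar L$, $\bar L^*$, $[A^*,\uf_1]$, or lines through $\ue_1,\ue_2,\ue_3$, leaving no degree of freedom. The main obstacle will be the necessity step: verifying that the geometric conditions of Definition~\ref{semi:def:elementary path} (e.g.\ $B^*\in[A^*,\uf_1]$, $C^*\in[B^*,\ue_3]$) and the leftover inequalities of \eqref{six:prop:phibot1:eq2} (namely $x_2(B)\ge\pbot_2$, $x_2(C^*)\ge\pbot_2$, $x_3(C^*)\le\ptop_3$) all propagate from the mere existence of $\uP$, together with a careful case analysis distinguishing the generic configuration from the two degenerate ones ($B=A$ or $B^*=\uf_1$) explicitly carved out in the statement.
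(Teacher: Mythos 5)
The sufficiency direction is correct and follows the paper: use Lemma~\ref{semi:lemma} to produce $\uP$ with $\cK(\uP)$ equal to the convex hull of $\cE$, then read off the six exponents from Lemma~\ref{six:lemma:E}.

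The necessity direction, however, has a genuine gap, and your proposed repair does not work. You plan to invoke Theorem~\ref{semi:thm} to get six elementary paths $\cE_1,\dots,\cE_6$, and then claim that ``by the uniqueness of the extremal vertices (Lemma~\ref{six:lemma:E}), their key vertices match the ones constructed above.'' But Lemma~\ref{six:lemma:E} contains no uniqueness statement of that kind: it only identifies where the infimum and supremum of a \emph{single} elementary path are attained. The six paths produced by Theorem~\ref{semi:thm} generally have distinct vertex sets — each $\cE_i$ comes with its own $A_i, B_i, C_i, A_i^*, B_i^*, C_i^*$ — and there is no reason for any vertex of your candidate $\cE$ to coincide with a vertex of some $\cE_i$ (except for $A$ and $A^*$, which can be \emph{chosen} to agree with one of the $A_i$, as the paper does). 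Your construction also leaves unresolved when to take the dichotomy $x_2(C)=\ptop_2$ versus $B=A$: this is not a free choice but is governed by a case analysis on whether $\pi_1(E)$ lands in $(A,D]$ or in $[\uf_2,A]$, where $D$ and $E$ are auxiliary points on the line $x_1=\pbot_1$. Finally, the whole content of the necessity direction is verifying that the constructed points $B,C,C^*$ really lie in $\bar\Delta^{(3)}$ with the required incidences \emph{and} satisfy $x_2(B)\ge\pbot_2$, $x_2(C^*)\ge\pbot_2$, $x_3(C^*)\le\ptop_3$; you explicitly defer this as ``the main obstacle.'' The paper closes it by a containment argument: it picks indices $j,k$ realizing $x_1(C_j)=\pbot_1$ and $\max\{x_2(B_k^*),x_2(C_k)\}=\ptop_2$, shows all six paths lie in a polygon $AA^*B^*ED$, and then proves that the triangle $B^*C^*\ue_2$ sits inside $B_j^*C_j^*\ue_2$ while $B=\pi_1(C)\in[B_j,\uf_2]$, from which the three remaining inequalities follow by comparing coordinates with $C_j^*$ and $B_j$. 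None of this is present in your outline, and your intended substitute (matching of extremal vertices) is false as stated.
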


Note that, when $\pbot_2=0$ and $\ptop_3=1$, the
last two conditions of \eqref{six:prop:phibot1:eq1}
are automatically satisfied, as well as
the last three conditions of \eqref{six:prop:phibot1:eq2},
and the proposition yields a geometric
description of the spectrum of
$(\phitop_1,\phibot_3,\phibot_1,\phitop_2)$.
Moreover, the last assertion means that, when it exists, the
elementary path constructed by the proposition is a function
of $\ptop_1,\pbot_3,\pbot_1,\ptop_2$ alone.

\begin{proof}[Proof of Proposition \ref{six:prop:phibot1}]
Suppose first that there exists an elementary path
$\cE$ satisfying the conditions
\eqref{six:prop:phibot1:eq2}.  By Lemma \ref{semi:lemma},
there is a proper $3$-system $\uP$ for which $\cK(\uP)$
is the convex hull of $\cE$.  Then, $\cK(\uP)$ has
the same infimum and the same supremum as $\cE$, so
\[
 \phibot_i(\uP)=x_i(\inf \cK(\uP))=x_i(\inf \cE)
 \et
 \phitop_i(\uP)=x_i(\sup \cK(\uP))=x_i(\sup \cE)
 \quad (1\le i\le 3),
\]
and thus Lemma \ref{six:lemma:E} yields
\eqref{six:prop:phibot1:eq1}.

Conversely, suppose that there exists a proper $3$-system $\uP$
satisfying \eqref{six:prop:phibot1:eq1}. By Theorem \ref{semi:thm}
applied to the map $T\colon\bR^3\to\bR^6$ given by $T(\ux)=(\ux,-\ux)$
for each $\ux\in\bR^3$, there exist elementary paths
$\cE_i=A_iA^*_iB^*_iC^*_iC_iB_iA_i$ for $i=1,\dots,6$ such that
\[
 \mu_T(\uP)=\inf(T(\cE_1\cup\dots\cup\cE_6)).
\]
By Lemma \ref{six:lemma:E}, this means that
\[
 \begin{aligned}
 (\phibot_1(\uP),\phibot_2(\uP),\phibot_3(\uP))
   &=\min\left\{\big(x_1(C_i),\min\{x_2(B_i),x_2(C^*_i)\},x_3(A^*_i)\big)
        \,;\, 1\le i\le 6\right\},\\
 (\phitop_1(\uP),\phitop_2(\uP),\phitop_3(\uP))
   &=\max\left\{\big(x_1(A_i),\max\{x_2(B^*_i),x_2(C_i)\},x_3(C^*_i)\big)
        \,;\, 1\le i\le 6\right\}.
 \end{aligned}
\]
We choose $A\in\{A_1,\dots,A_6\}$ such that
\[
 x_1(A)=\max\{x_1(A_1),\dots,x_1(A_6)\}=\phitop_1(\uP)=\ptop_1.
\]
Then, the point $A^*$ on $[A,\ue_2]\cap\bar{L}^*$ satisfies
\[
 x_3(A^*)=\min\{x_3(A_1),\dots,x_3(A_6)\}=\phibot_3(\uP)=\pbot_3.
\]
We also choose $j,k\in \{1,\dots,6\}$ such that
\[
 x_1(C_j)=\phibot_1(\uP)=\pbot_1
 \et
 \max\{x_2(B^*_k),x_2(C_k)\}=\phitop_2(\uP)=\ptop_2.
\]

Since the plane of the equation $x_1=\pbot_1$ contains
$C_j \in \bar\Delta^{(3)}$, that plane cuts
$\bar\Delta^{(3)}$ in a line segment $[D,D^*]$ with $D\in \bar{L}$
and $D^*\in \bar{L^*}$, and $\cE_1,\dots,\cE_6$ are
contained in the triangle $\uf_2\/D^*\/D$.  In particular,
we have $A\in [\uf_2,D]$, $A^*\in [\uf_2,B^*]$, and so
$\cE_1,\dots,\cE_6$ are contained in the convex quadrilateral
$AA^*D^*D$.  Then, since the plane $x_2=\ptop_2$
contains the point $B^*_k$ or $C_k$, it must cut that
quadrilateral in a line segment $[B^*,E]$ with
$B^*\in[A^*,D^*]\subseteq \bar{L}^*$ (because
$x_2(A^*)\le \ptop_2$) and $E\in [C_j,D^*]\subseteq [D,D^*]$.
We conclude that $\cE_1,\dots,\cE_6$ are contained
in the convex polygon $A\/A^*\/B^*\/E\/D$.

Let $F$ denote the unique point of $[D,D^*]$ with $\pi_1(F)=A$,
and let $G=\pi_1(E)\in [\uf_2,D]\subseteq \bar{L}$.
If $G\in (A,D]$, we are in the situation of
Figure \ref{six:fig:phibot1:C=E}.  We have
$B^*\neq \uf_1$ because else $B^*=E$ and thus $G=\uf_2
\in [\uf_2,A]$, against the hypothesis.
Then, putting $B=G$ and $C=E$, we note that $[B^*,\ue_3]$
meets $[B,C]$, so there exists $C^*\in\bar\Delta^{(3)}$
for which $AA^*B^*C^*CBA$ is an elementary path.
Since $B^*\neq \uf_1$, the choice of $C^*$ is unique.
Otherwise, we are
in the situation of Figure \ref{six:fig:phibot1:B=A}
with $G\in [\uf_2,A]$.
Then, we define $B=A$, $C=F$, and we claim again,
that the line segment $[B^*,\ue_3]$ meets $[B,C]=[A,C]$.
This is clear if $G=A$ because then $C=F=E$.
Suppose now that $G\neq A$.  Then, we have $E\in[D^*,F)$
and so $x_2(C)=x_2(F)<x_2(E)=\ptop_2$.  Since
$C_k$ belongs to the triangle $ACD$ and
since $C$ is the point of that triangle with the
largest $x_2$ coordinate, this yields
$x_2(C_k)\le x_2(C)<\ptop_2$.  In view of the choice
of $k$, this implies that $x_2(B^*_k)=\ptop_2$,
and thus $B^*=B^*_k$. As $C^*_k$ also
belongs to $ACD$ while $B^*_k$ belongs to
$\bar{L}^*$, it follows that $[B^*_k,C^*_k]$ cuts
$[A,C]$.  A fortiori, the longer line segment
$[B^*_k,\ue_3]=[B^*,\ue_3]$ does the same.  So there is
a point $C^*$ in $\bar\Delta^{(3)}$ for which
$AA^*B^*C^*CBA$ is an elementary path.  If $B^*=\uf_1$,
we simply take $C^*=C$. Else, the choice of $C^*$ is unique.

\noindent
\begin{minipage}{.5\textwidth}
  \centering
     \begin{tikzpicture}[scale=0.65]
       \coordinate (E1) at (0,0);
       \coordinate (E2) at (20,0);
       \coordinate (E3) at (10,17.3);
       \coordinate (MBC) at ($0.5*(E2)+0.5*(E3)$);
       \coordinate (M) at ($1/3*(E2)+1/3*(E3)$);
       \draw[] (M)--(MBC)--(E3)--(M);
       \node[circle, inner sep=1pt, fill] at (E3) {};
       \node[above] at (E3) {$\ue_3$};
       \node[circle, inner sep=1pt, fill] at (M) {};
       \node[below left] at (M) {$\uf_2$};
       \coordinate (D) at ($0.85*(E3)+0.15*(M)$);
       \node[circle, inner sep=1pt, fill] at (D) {};
       \node[left] at (D) {$D$};
       \coordinate (De) at ($0.85*(MBC)+0.15*(M)$);
       \node[circle, inner sep=1pt, fill] at (De) {};
       \node[below right] at (De) {$D^*$};
       \coordinate (Dplus) at ($(D)+0.1*(E3)-0.1*(E2)$);
       \coordinate (Dmoins) at ($(De)-0.12*(E3)+0.12*(E2)$);
       \draw[dashed] (Dplus)--(Dmoins);
       \node at (Dmoins) [right]{$x_1=\pbot_1$};
       \coordinate (A) at ($10/32*(E1)+10/32*(E2)+12/32*(E3)$); 
       \coordinate (Ae) at ($10/34*(E1)+12/34*(E2)+12/34*(E3)$); 
       \node[circle, inner sep=1pt, fill] at (A) {};
       \coordinate (positionA) at ($(A)-(0,0.1)$);
       \node[left] at (positionA) {$A$};
       \node[circle, inner sep=1pt, fill] at (Ae) {};
       \coordinate (positionAe) at ($(Ae)+(0.2,0)$);
       \node[below] at (positionAe) {$A^*$};
       \coordinate (C) at ($0.05*(E1)+0.396*(E2)+0.554*(E3)$);
       \node[circle, inner sep=1pt, fill] at (C) {};
       \node[right] at (C) {$C=E$};
       \coordinate (Be) at ($0.396*(E2)+0.396*(E3)$);
       \node[circle, inner sep=1pt, fill] at (Be) {};
       \node at (Be) [below right]{$B^*$};
       \coordinate (phi2plus) at ($(Be)+0.36*(E3)$);
       \coordinate (phi2moins) at ($(Be)-0.02*(E3)$);
       \draw[dashed] (phi2moins)--(phi2plus);
       \node at (phi2plus) [right]{$x_2=\ptop_2$};
       \coordinate (B) at ($0.396/1.346*(E1)+0.396/1.346*(E2)+0.554/1.346*(E3)$);
       \node[circle, inner sep=1pt, fill] at (B) {};
       \node at (B) [left]{$B=G$};
       \coordinate (Ce) at ($0.072*(E1)+0.136*(E2)+0.792*(E3)$);
       \node[circle, inner sep=1pt, fill] at (Ce) {};
       \coordinate (positionCe) at ($(Ce)+(0.2,0.05)$);
       \node[right] at (positionCe) {$C^*$};
       \coordinate (Cj) at ($0.05*(E1)+0.30*(E2)+0.65*(E3)$);
       \node[circle, inner sep=1pt, fill] at (Cj) {};
       \coordinate (positionCj) at ($(Cj)+(-0.05,0.1)$);
       \node[right] at (positionCj) {$C_j$};
       \coordinate (Bej) at ($3/8*(E2)+3/8*(E3)$); 
       \node[circle, inner sep=1pt, fill] at (Bej) {};
       \coordinate (positionBej) at ($(Bej)+(0.4,0)$);
       \node[below] at (positionBej) {$B^*_j$};
       \coordinate (Bj) at ($0.3/1.25*(E2)+0.65/1.25*(E3)$); 
       \node[circle, inner sep=1pt, fill] at (Bj) {};
       \node[left] at (Bj) {$B_j$};
       \coordinate (F) at ($0.05*(E1)+{0.95*10/22}*(E2)+{0.95*12/22}*(E3)$);
       \node[circle, inner sep=1pt, fill] at (F) {};
       \coordinate (positionF) at ($(F)-(0.2,0)$);
       \node[right] at (F) {$F$};
       \coordinate (Cej) at ($0.0645*(E1)+0.0645*1.5*(E2)+0.0645*13*(E3)$);
       \node[circle, inner sep=1pt, fill] at (Cej) {};
       \coordinate (positionCej) at ($(Cej)+(0.05,0.1)$);
       \node[right] at (positionCej) {$C^*_j$};
       \draw[thick] (A)--(Ae)--(Be)--(Ce)--(C)--(B)--(A);
       \draw[thick, dashed] (Bej)--(Cej)--(Cj)--(Bj)--(A);
       \draw[dashed] (A)--(F);
       \end{tikzpicture}
  \captionof{figure}{}
  \label{six:fig:phibot1:C=E}
\end{minipage}
\begin{minipage}{.49\textwidth}
  \centering
     \begin{tikzpicture}[scale=0.65]
       \coordinate (E1) at (0,0);
       \coordinate (E2) at (20,0);
       \coordinate (E3) at (10,17.3);
       \coordinate (MBC) at ($0.5*(E2)+0.5*(E3)$);
       \coordinate (M) at ($1/3*(E2)+1/3*(E3)$);
       \draw[] (M)--(MBC)--(E3)--(M);
       \node[circle, inner sep=1pt, fill] at (M) {};
       \node[below left] at (M) {$\uf_2$};
       \node[circle, inner sep=1pt, fill] at (E3) {};
       \node[above] at (E3) {$\ue_3$};
       \coordinate (D) at ($0.85*(E3)+0.15*(M)$);
       \node[circle, inner sep=1pt, fill] at (D) {};
       \node[left] at (D) {$D$};
       \coordinate (De) at ($0.85*(MBC)+0.15*(M)$);
       \node[circle, inner sep=1pt, fill] at (De) {};
       \node[below right] at (De) {$D^*$};
       \coordinate (Dplus) at ($(D)+0.1*(E3)-0.1*(E2)$);
       \coordinate (Dmoins) at ($(De)-0.12*(E3)+0.12*(E2)$);
       \draw[dashed] (Dplus)--(Dmoins);
       \node at (Dmoins) [right]{$x_1=\pbot_1$};
       \coordinate (A) at ($10/33*(E1)+10/33*(E2)+13/33*(E3)$); 
       \coordinate (Ae) at ($10/36*(E1)+13/36*(E2)+13/36*(E3)$); 
       \node[circle, inner sep=1pt, fill] at (A) {};
       \coordinate (positionA) at ($(A)+(0,0.2)$);
       \node[left] at (positionA) {$B=A$};
       \node[circle, inner sep=1pt, fill] at (Ae) {};
       \node[below] at (Ae) {$A^*$};
       \coordinate (C) at ($0.05*(E1)+9.5/23*(E2)+13/23*0.95*(E3)$);
       \node[circle, inner sep=1pt, fill] at (C) {};
       \coordinate (positionC) at ($(C)+(-0.2,0.4)$);
       \node[right] at (positionC) {$C=F$};
       \coordinate (Be) at ($2/16*(E1)+7/16*(E2)+7/16*(E3)$); 
       \node[circle, inner sep=1pt, fill] at (Be) {};
       \node at (Be) [below right]{$B^*$};
       \coordinate (phi2plus) at ($(Be)+0.23*(E3)$);
       \coordinate (phi2moins) at ($(Be)-0.04*(E3)$);
       \draw[dashed] (phi2moins)--(phi2plus);
       \node at (phi2plus) [right]{$x_2=\ptop_2$};
       \coordinate (E) at ($0.05*(E1)+0.437*(E2)+0.513*(E3)$);
       \node[circle, inner sep=1pt, fill] at (E) {};
       \coordinate (positionE) at ($(E)+(0,0.1)$);
       \node[right] at (positionE) {$E$};
       \coordinate (G) at ($0.437/1.387*(E2)+0.513/1.387*(E3)$);
       \node[circle, inner sep=1pt, fill] at (G) {};
       \coordinate (positionG) at ($(G)+(0,-0.2)$);
       \node[left] at (positionG) {$G$};
       \coordinate (Ce) at ($0.066*(E1)+0.23*(E2)+0.705*(E3)$);
       \node[circle, inner sep=1pt, fill] at (Ce) {};
       \coordinate (positionCe) at ($(Ce)+(0.1,0.1)$);
       \node[right] at (positionCe) {$C^*$};
       \coordinate (Bej) at ($2/5*(E2)+2/5*(E3)$); 
       \node[circle, inner sep=1pt, fill] at (Bej) {};
       \node[below] at (Bej) {$B^*_j$};
       \coordinate (Cej) at ($2/16*(E2)+13/16*(E3)$); 
       \node[circle, inner sep=1pt, fill] at (Cej) {};
       \coordinate (positionCej) at ($(Cej)+(0.03,0.1)$);
       \node[right] at (positionCej) {$C^*_j$};
       \coordinate (Cj) at ($0.05*(E1)+0.3*(E2)+0.65*(E3)$);
       \node[circle, inner sep=1pt, fill] at (Cj) {};
       \coordinate (positionCj) at ($(Cj)+(-0.05,0.1)$);
       \node[right] at (positionCj) {$C_j$};
       \coordinate (Bj) at ($0.3/1.25*(E2)+0.65/1.25*(E3)$); 
       \node[circle, inner sep=1pt, fill] at (Bj) {};
       \node[left] at (Bj) {$B_j$};
       \draw[thick] (A)--(Ae)--(Be)--(Ce)--(C)--(A);
       \draw[thick, dashed] (Bej)--(Cej)--(Cj)--(Bj)--(A);
       \draw[dashed] (G)--(E);
       \end{tikzpicture}
  \captionof{figure}{}
  \label{six:fig:phibot1:B=A}
\end{minipage}%

In all cases, the triangle $B^*C^*\ue_2$ is contained in
the triangle $B_j^*C_j^*\ue_2$ because $B^*\in[B^*_j,D^*]$
and $C\in[C_j,D^*]$, as illustrated in Figures
\ref{six:fig:phibot1:C=E} and \ref{six:fig:phibot1:B=A}.
Since $C^*_j$ is the point of that triangle with
the largest $x_3$-coordinate and also the one with the
smallest $x_2$-coordinate, we deduce that
\[
 x_3(C^*)\le x_3(C^*_j)\le \phitop_3(\uP)
 \et
 x_2(C^*)\ge x_2(C^*_j)\ge \phibot_2(\uP).
\]
Finally, we have $B=\pi_1(C)\in \pi_1([C_j,D^*])=[B_j,\uf_2]$
and thus
\[
 x_2(B)\ge x_2(B_j)\ge \phibot_2(\uP).
\]
So, all conditions in \eqref{six:prop:phibot1:eq2} are fulfilled.
\end{proof}

\medskip
The next proposition is dual to the above in the sense
that it is obtained from it by permuting $A$ and $A^*$,
$B$ and $B^*$, $C$ and $C^*$, $x_1$ and $x_3$, $\phibot_1$
and $\phitop_3$, $\phibot_2$ and $\phitop_2$, $\phibot_3$
and $\phitop_1$, $\pbot_1$ and $\ptop_3$, $\pbot_2$ and
$\ptop_2$, $\pbot_3$ and $\ptop_1$, and by reversing
inequalities.   The proof is also dual in that sense,
although some slight additional modifications have
to be done.  This is left to the reader.  Note that, in
that sense Lemma \ref{six:lemma:E} is auto-dual as it
stays the same under these permutations, provided
that we permute as well the functions $\min$ and $\max$,
$\inf$ and $\sup$.

\begin{proposition}
\label{six:prop:phitop3}
Let $(\pbot_1, \pbot_2, \pbot_3, \ptop_1,
\ptop_2, \ptop_3) \in \bR^6$.
There exists a proper $3$-system $\uP$ such that
\begin{equation}
\label{six:prop:phitop3:eq1}
 \begin{aligned}
 &\phitop_1(\uP)=\ptop_1, \
 \phibot_3(\uP)=\pbot_3, \
 \phibot_2(\uP)=\pbot_2, \
 \phitop_3(\uP)=\ptop_3,\\
 &\phibot_1(\uP)\ge \pbot_1, \
 \phitop_2(\uP)\le \ptop_2
 \end{aligned}
\end{equation}
if and only if there exists an elementary path
$\cE=AA^*B^*C^*CBA$ satisfying
\begin{equation}
\label{six:prop:phitop3:eq2}
 \begin{aligned}
 &x_1(A)=\ptop_1, \
  x_3(A^*)=\pbot_3, \
  x_2(B)=\pbot_2, \
  x_3(C^*)=\ptop_3,\\
 &x_2(C^*)\ge \pbot_2, \
  x_2(B^*)\le \ptop_2, \
  x_2(C)\le \ptop_2, \
  x_1(C)\ge \pbot_1,
 \end{aligned}
\end{equation}
and at least one of the two conditions $x_2(C^*)=\pbot_2$
or $B^*=A^*$, and, in the case where $B=\uf_3$,
the conditions $B^*=A^*$ and $C=C^*$.  Such an elementary
path, when it exists, is unique.
\end{proposition}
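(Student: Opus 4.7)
The proof parallels that of Proposition \ref{six:prop:phibot1}, with the roles of $B,C$ and $B^*,C^*$ interchanged and the free parameters being $\pbot_2,\ptop_3$ in place of $\ptop_2,\pbot_1$. For the ``if'' direction, an elementary path $\cE=AA^*B^*C^*CBA$ satisfying \eqref{six:prop:phitop3:eq2} yields, via Lemma \ref{semi:lemma}, a proper $3$-system $\uP$ with $\cK(\uP)$ equal to the convex hull of $\cE$, so that $\phibot_i(\uP)=x_i(\inf\cE)$ and $\phitop_i(\uP)=x_i(\sup\cE)$ for $i=1,2,3$. Applying Lemma \ref{six:lemma:E} together with \eqref{six:prop:phitop3:eq2}, the four exact equalities of \eqref{six:prop:phitop3:eq1} can be read off directly (in particular $\phibot_2(\uP)=\pbot_2$ follows from $x_2(B)=\pbot_2\le x_2(C^*)$), while the remaining inequalities $\phibot_1(\uP)\ge\pbot_1$ and $\phitop_2(\uP)\le\ptop_2$ follow from $x_1(C)\ge\pbot_1$ and from $x_2(B^*),x_2(C)\le\ptop_2$ respectively.

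For the ``only if'' direction, suppose $\uP$ satisfies \eqref{six:prop:phitop3:eq1}. Theorem \ref{semi:thm} applied to $T(\ux)=(\ux,-\ux)$ provides six elementary paths $\cE_i=A_iA_i^*B_i^*C_i^*C_iB_iA_i$ realizing $\mu_T(\uP)$ via Lemma \ref{six:lemma:E}. One picks $A\in\{A_1,\dots,A_6\}$ with $x_1(A)=\max_i x_1(A_i)=\ptop_1$; the point $A^*\in[A,\ue_2]\cap\bar{L}^*$ is then forced, and the same reasoning as in Proposition \ref{six:prop:phibot1} shows $x_3(A^*)=\pbot_3$. Next select indices $k$ and $j$ with $x_3(C_k^*)=\ptop_3$ and $\min\{x_2(B_j),x_2(C_j^*)\}=\pbot_2$. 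The plane $\{x_3=\ptop_3\}$ cuts $\bar\Delta^{(3)}$ in a segment $[D,D^*]$ with $D\in\bar{L}$ and $D^*\in\bar{L}^*$, confining all the $\cE_i$ to the triangle $\uf_2DD^*$; the plane $\{x_2=\pbot_2\}$ cuts the quadrilateral $AA^*D^*D$ in a second segment, and these together with the edges from $A$ through $\ue_3$ and from $A^*$ through $\ue_2$ bound a convex polygon that contains all the $\cE_i$. From this polygon one extracts $C^*$ on $\{x_3=\ptop_3\}$ and then $B^*$, $C$, $B$ by intersecting the appropriate line segments joining the extracted vertices to $\ue_2$, $\ue_3$ or $\ue_1$, producing an elementary path $AA^*B^*C^*CBA$.

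The case distinction mirrors that of Proposition \ref{six:prop:phibot1}: either $\pbot_2$ is attained at some $C_j^*$ (yielding $x_2(C^*)=\pbot_2$) or only at some $B_j$ (forcing $B^*=A^*$); in the degenerate subcase $B=\uf_3$ one further has $B^*=A^*$ and $C=C^*$, dual to the case $B^*=\uf_1$ in Proposition \ref{six:prop:phibot1}. The main obstacle, as in the original proof, is verifying the four inequalities in the second line of \eqref{six:prop:phitop3:eq2}, particularly $x_1(C)\ge\pbot_1$: this will follow from the inclusion of the triangle $B^*C^*\ue_2$ in the triangle $B_j^*C_j^*\ue_2$, which forces $C\in[C_j,D^*]$ and hence $x_1(C)\ge x_1(C_j)\ge\pbot_1$. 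The bounds $x_2(C),x_2(B^*)\le\ptop_2$ are enforced by the condition $\phitop_2(\uP)\le\ptop_2$ applied to the $\cE_i$ in which $B^*$ and $C$ are selected, while $x_2(C^*)\ge\pbot_2$ holds by construction since $C^*$ lies in the polygon bounded by $\{x_2=\pbot_2\}$. Uniqueness of the extracted elementary path reduces to the uniqueness of the base $AA^*$ (determined by $\ptop_1$ and $\pbot_3$) combined with the degenerate-case specifications, exactly as in Proposition \ref{six:prop:phibot1}.
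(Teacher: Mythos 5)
The paper itself does not carry out a proof of this proposition; it declares it ``dual'' to Proposition~\ref{six:prop:phibot1} and explicitly warns that ``some slight additional modifications have to be done,'' leaving these to the reader. Your proposal is therefore a genuine attempt to supply a detailed argument where the paper gives none, and your ``if'' direction (via Lemma~\ref{semi:lemma} and Lemma~\ref{six:lemma:E}) is correct and is exactly the expected dual of the corresponding step.

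However, your ``only if'' direction has a concrete gap, and it is precisely the complication the paper flags. You assert that the plane $\{x_3=\ptop_3\}$ cuts $\bar\Delta^{(3)}$ in a segment $[D,D^*]$ with $D\in\bar L$ and $D^*\in\bar{L}^*$, ``confining all the $\cE_i$ to the triangle $\uf_2 D D^*$.'' This fails when $\ptop_3>1/2$: since $x_3$ only ranges over $[1/3,1/2]$ on $\bar{L}^*=[\uf_2,\uf_1]$, the endpoint $D^*$ then lies on the edge $[\uf_1,\uf_3]$ rather than on $\bar{L}^*$, and the region $\{x_3\le\ptop_3\}\cap\bar\Delta^{(3)}$ is a quadrilateral, not a triangle. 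The polygon you build from it is therefore not the one you describe, and the extraction of $B^*$, $C^*$, $C$, $B$ by ``intersecting appropriate segments'' needs to be reworked in that case. (This is exactly the asymmetry the paper addresses in its proof of Theorem~\ref{six:thm:ineq}, where the constraint $\pbot_2+\ptop_3\le 1$ enters to ensure $E^*=(1-\pbot_2-\ptop_3,\pbot_2,\ptop_3)$ lies on $[D,D^*]$.) A secondary issue: your sketch of the final inequalities was not dualized consistently --- you cite the containment of the triangle $B^*C^*\ue_2$ in $B^*_jC^*_j\ue_2$ to obtain $x_1(C)\ge\pbot_1$, but in the dual argument the relevant containment should involve the triangles $BC\ue_2\subseteq B_jC_j\ue_2$, with the $x_3$/$x_2$ conclusions of the original proof becoming conclusions about $x_1$ and $x_2$ here. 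Both points need to be fixed before the proposal can be regarded as a complete proof.
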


Again, for the choice of $\pbot_1=0$ and $\ptop_2=1$, the
last two conditions of \eqref{six:prop:phitop3:eq1}
and the last three conditions of \eqref{six:prop:phitop3:eq2}
are automatically satisfied, yielding a geometric
description of the spectrum of
$(\phitop_1,\phibot_3,\phibot_2,\phitop_3)$.
We also deduce that, when it exists, the
elementary path constructed by the proposition is a function
of $\ptop_1,\pbot_3,\pbot_2,\ptop_3$ alone.

Putting the two propositions together, we obtain the following
geometric description of the spectrum of the six exponents.

\begin{corollary}
\label{six:cor:spectrum}
A point $\ualpha=(\pbot_1, \pbot_2, \pbot_3, \ptop_1,
\ptop_2, \ptop_3) \in \bR^6$ belongs to the spectrum $\cS$
of the family $(\phibot_1, \phibot_2, \phibot_3, \phitop_1,
\phitop_2, \phitop_3)$ if and only if, for the same point,
there exist elementary paths $\cE_1$ and $\cE_2$
satisfying respectively the conditions of Proposition
\ref{six:prop:phibot1} and \ref{six:prop:phitop3}.
\end{corollary}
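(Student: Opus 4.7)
The plan is to deduce the corollary directly from the biconditional character of Propositions \ref{six:prop:phibot1} and \ref{six:prop:phitop3}, combined with Corollary \ref{3sys:cor2}, which produces, from two proper $3$-systems, a single proper $3$-system whose $\cK$-set equals the convex hull of the union of the two $\cK$-sets.

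For the ``only if'' direction, I would start with $\ualpha=(\pbot_1,\dots,\ptop_3)\in\cS$. By the definition of $\cS$ together with Theorem \ref{systems:thm:approxLbyP}, some proper $3$-system $\uP$ has all six exponents equal to the corresponding components of $\ualpha$. Then the four equalities and two weak inequalities of \eqref{six:prop:phibot1:eq1} hold for $\uP$ (the latter as equalities), so Proposition \ref{six:prop:phibot1} furnishes the required path $\cE_1$. The same argument applied to \eqref{six:prop:phitop3:eq1} and Proposition \ref{six:prop:phitop3} furnishes $\cE_2$.

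For the ``if'' direction, given $\cE_1$ and $\cE_2$, Propositions \ref{six:prop:phibot1} and \ref{six:prop:phitop3} provide proper $3$-systems $\uP_1$ and $\uP_2$ satisfying \eqref{six:prop:phibot1:eq1} and \eqref{six:prop:phitop3:eq1} respectively. Corollary \ref{3sys:cor2} then yields a proper $3$-system $\uP$ with $\cK(\uP)$ equal to the convex hull of $\cK(\uP_1)\cup\cK(\uP_2)$. Since the infimum and supremum of a bounded set coincide with those of its convex hull, I obtain for each $i\in\{1,2,3\}$
\[
 \phibot_i(\uP)=\min\{\phibot_i(\uP_1),\phibot_i(\uP_2)\}
 \et
 \phitop_i(\uP)=\max\{\phitop_i(\uP_1),\phitop_i(\uP_2)\}.
\]
A direct check of the six coordinates shows that in each case exactly one of $\uP_1,\uP_2$ delivers the target value $\pbot_i$ or $\ptop_i$ as an equality, while the other contributes a value on the admissible side of the min or max; hence $\uP$ realizes all six exponents of $\ualpha$. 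A final appeal to Theorem \ref{systems:thm:approxLbyP} exhibits $\uu\in\bR^3$ with $\bQ$-linearly independent coordinates whose associated $\uL_\uu$ differs from $\uP$ by a bounded function, proving $\ualpha\in\cS$.

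Since the two propositions carry all of the combinatorial content and Corollary \ref{3sys:cor2} does the gluing, there is no substantial obstacle left in the corollary itself; the only care required is the routine bookkeeping of the six componentwise comparisons, which is already arranged by the asymmetric role of the two propositions: the first one pins down $\phibot_1,\phibot_3,\phitop_1,\phitop_2$ exactly, the second pins down $\phitop_1,\phitop_3,\phibot_2,\phibot_3$ exactly, and the remaining four slack inequalities line up in exactly the right direction so that the min/max restores each target value.
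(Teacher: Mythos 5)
Your proof is correct and takes essentially the same route as the paper: the forward direction follows immediately from Propositions \ref{six:prop:phibot1} and \ref{six:prop:phitop3} applied to a proper $3$-system realizing $\ualpha$, and the converse combines the two resulting systems $\uP_1,\uP_2$ using the min/max structure of the exponents. The paper phrases the combination step through the remark that $\cS$ is closed under component-wise $(\min,\max)$, which is itself a consequence of Corollary \ref{3sys:cor2}; you simply invoke Corollary \ref{3sys:cor2} directly, which amounts to the same thing.
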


For the proof, recall from the introduction that $\cS=\sigma(\image^*(\mu_T))$
where $T\colon\bR^3\to\bR^6$ and $\sigma\colon\bR^6\to\bR^6$
are given by $T(\ux)=(\ux,-\ux)$ and $\sigma(\ux,\uy)=(\ux,-\uy)$
for each $\ux, \uy \in \bR^3$.  Since $\image^*(\mu_T)$ is closed
under the minimum, we deduce that, for any points $(\ux_1,\uy_1),
(\ux_2,\uy_2)$ of $\cS$, viewed as a subset of $\bR^3\times\bR^3$,
we have $(\min(\ux_1,\ux_2),\max(\uy_1,\uy_2))\in\cS$.

\begin{proof}[Proof of Corollary \ref{six:cor:spectrum}]
Clearly the existence of such elementary paths is necessary.
Conversely, suppose the existence of such paths.
Let $\uP_1$ and $\uP_2$ be corresponding proper $3$-systems
satisfying \eqref{six:prop:phibot1:eq1} and
\eqref{six:prop:phitop3:eq1} respectively.  Since
\[
 \min(\phibot_i(\uP_1),\phibot_i(\uP_2))=\pbot_i
 \et
 \max(\phitop_i(\uP_1),\phitop_i(\uP_2))=\ptop_i
 \quad
 \text{for $i=1,2,3,$}
\]
the remark made before the proof implies that
$\ualpha\in\cS$.
\end{proof}

We now turn to an explicit description of the spectrum 
$\cS$ as a semi-algebraic set.  Although the statements
of Propositions \ref{six:prop:phibot1} and \ref{six:prop:phitop3}
suggest multiple systems of inequalities depending on the various
forms of the elementary paths, we manage to construct a single
set of inequalities.

\begin{theorem}
\label{six:thm:ineq}
Let $\ualpha=(\pbot_1, \pbot_2, \pbot_3, \ptop_1,
\ptop_2, \ptop_3) \in \bR^6$.  The point $\ualpha$ satisfies
the conditions of Proposition \ref{six:prop:phibot1} if
and only if the following hold:
\begin{align}
\label{six:thm:ineq:eq1}
 &\pbot_1\le \ptop_1\le 1/3\le \pbot_3\le \ptop_2\le (1-\pbot_1)/2 \le x_2(\uf_1),\\
\label{six:thm:ineq:eq2}
 &(1-2\ptop_1)(1-2\pbot_3)=\ptop_1\pbot_3,\\
\label{six:thm:ineq:eq3}
 &(\pbot_1+2\ptop_1-3\pbot_1\ptop_1)\ptop_2\le (1-\pbot_1)\ptop_1,\\
\label{six:thm:ineq:eq4}
 &\pbot_2\le \ptop_1,
  \quad
  (1-\pbot_1+\ptop_2)\pbot_2\le \ptop_2,\\
\label{six:thm:ineq:eq5}
 &\beta\pbot_2\le \pbot_1\ptop_2,
  \quad
 \gamma\pbot_2\le (1-\ptop_1)\pbot_1\ptop_2,\\
\label{six:thm:ineq:eq6}
 &\beta\ptop_3\ge (1-2\ptop_2)(1-\pbot_1-\ptop_2),
  \quad
 \gamma\ptop_3\ge (1-\pbot_1)(1-2\ptop_1)(1-2\ptop_2),\\
\label{six:thm:ineq:eq7}
 &(1-\ptop_1)\ptop_3\ge (1-\pbot_1)(1-2\ptop_1),
\end{align}
where
\begin{align*}
 \beta=\pbot_1\ptop_2+(1-2\ptop_2)(1-\ptop_2),\quad
 \gamma=\pbot_1(1-\ptop_1)(1-\ptop_2)+(1-\pbot_1)(1-2\ptop_1)(1-2\ptop_2).
\end{align*}

The same point $\ualpha$ fulfills the conditions
of Proposition \ref{six:prop:phitop3}
if and only if it satisfies $\pbot_2+\ptop_3\le 1$
and all the dual constraints obtained from \eqref{six:thm:ineq:eq1}
to \eqref{six:thm:ineq:eq7} by interchanging everywhere
the symbols $\pbot_1$ and $\ptop_3$, $\pbot_2$ and $\ptop_2$,
$\pbot_3$ and $\ptop_1$, the inequality
signs $\le$ and $\ge $, and the constants $x_2(\uf_1)=1/2$ and
$x_2(\uf_3)=0$.

Finally $\ualpha$ belongs to the spectrum $\cS$ of
the six exponents if and only if it satisfies the constraints
\eqref{six:thm:ineq:eq1} to \eqref{six:thm:ineq:eq7}, the dual
constraints indicated above,
and the additional condition $\pbot_2+\ptop_3\le 1$.
\end{theorem}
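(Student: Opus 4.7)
The plan is to prove the three statements of the theorem in turn, exploiting the geometric parametrization of elementary paths provided by Propositions \ref{six:prop:phibot1} and \ref{six:prop:phitop3}.

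For the first statement, any elementary path $\cE=AA^*B^*C^*CBA$ satisfying \eqref{six:prop:phibot1:eq2} admits an explicit parametrization: the equations $x_1(A)=\ptop_1$, $x_3(A^*)=\pbot_3$ and $x_2(B^*)=\ptop_2$ uniquely determine the vertices $A=(\ptop_1,\ptop_1,1-2\ptop_1)\in\bar L$, $A^*=(1-2\pbot_3,\pbot_3,\pbot_3)\in\bar L^*$ and $B^*=(1-2\ptop_2,\ptop_2,\ptop_2)\in\bar L^*$. The requirement $A^*\in[A,\ue_2]$ translates directly into the Jarn\'{\i}k-type relation \eqref{six:thm:ineq:eq2}, while the natural ranges of these vertices, together with $\pbot_1\le\ptop_1$ and $\ptop_2\le(1-\pbot_1)/2$ (ensuring $x_1(B^*)\ge\pbot_1$), give \eqref{six:thm:ineq:eq1}.

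The core of the argument is the case analysis imposed by the disjunction $x_2(C)=\ptop_2$ (\emph{Case A}) versus $B=A$ (\emph{Case B}). Case A fixes $C=(\pbot_1,\ptop_2,1-\pbot_1-\ptop_2)$, while Case B forces $x_2(C)=\ptop_1(1-\pbot_1)/(1-\ptop_1)$ via $B=A\in[C,\ue_1]$. In both situations $C^*=\overleftrightarrow{\ue_2 C}\cap[B^*,\ue_3]$ is then determined, and a direct computation yields $x_2(C^*)=\pbot_1\ptop_2/\beta$ and $x_3(C^*)=(1-2\ptop_2)(1-\pbot_1-\ptop_2)/\beta$ in Case A, with analogous formulas in Case B having $\gamma$ as denominator. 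Imposing $x_2(C^*)\ge\pbot_2$ and $x_3(C^*)\le\ptop_3$ in each case produces the two alternatives in \eqref{six:thm:ineq:eq5} and \eqref{six:thm:ineq:eq6}; the constraint $x_2(B)\ge\pbot_2$ (with $x_2(B)=\ptop_1$ in Case B and $x_2(B)=\ptop_2/(1-\pbot_1+\ptop_2)$ in Case A) produces \eqref{six:thm:ineq:eq4}; the Case B-specific requirement $x_1(C)\le x_1(C^*)$, equivalent to $C\in[C^*,\ue_2]$, simplifies to \eqref{six:thm:ineq:eq3}; and $x_3(C)\le\ptop_3$ in Case B becomes \eqref{six:thm:ineq:eq7}. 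A crucial algebraic verification, exploiting $\ptop_1\le 1/3$ from \eqref{six:thm:ineq:eq1}, shows that in each regime (Case A feasible iff $\ptop_2(1-\ptop_1)\le\ptop_1(1-\pbot_1)$, Case B otherwise) the inequalities from the feasible case imply those from the other, so the full conjunction is equivalent to the existence of at least one valid elementary path. The degenerate case $B^*=\uf_1$, corresponding to $\ptop_2=1/2$, is handled by the additional provisions of Proposition \ref{six:prop:phibot1}.

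The second statement follows by the same style of argument applied to the parametrization for Proposition \ref{six:prop:phitop3}, whose structural similarity to Proposition \ref{six:prop:phibot1} (with $B$ and $C^*$ now fixed by equations, $B^*$ and $C$ constrained by inequalities) yields the dual inequalities via the syntactic substitution specified in the theorem. For the third statement, Corollary \ref{six:cor:spectrum} identifies $\cS$ with the set of $\ualpha\in\bR^6$ for which both propositions admit an elementary path simultaneously, so the conjunction of all inequalities from the first two statements characterizes membership in $\cS$. The extra condition $\pbot_2+\ptop_3\le 1$ arises from the universal bound $\phibot_2+\phitop_3\le 1-\phibot_1\le 1$ (via $\liminf+\limsup\le\limsup$ applied to the functions $q^{-1}L_{\uu,2}(q)$ and $q^{-1}L_{\uu,3}(q)$) and is listed explicitly in the theorem. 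The main obstacle is the delicate algebraic verification that, in each regime, the inequalities specific to the feasible case imply those of the other case; this rests on identities involving $\beta$, $\gamma$ and the Jarn\'{\i}k relation \eqref{six:thm:ineq:eq2}, combined with the range constraints \eqref{six:thm:ineq:eq1}---most crucially $\ptop_1\le 1/3$.
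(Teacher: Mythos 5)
Your plan follows the same overall structure as the paper's proof: fix $A$, $A^*$, $B^*$ from the equality constraints, split into the two regimes ($C=E$ with $B=G$ versus $C=F$ with $B=A$), compute $C$, $C^*$ explicitly, and translate each geometric condition into an algebraic inequality. Your formulas for $E$, $F$, $C^*_E$, $C^*_F$ and your identification of which constraints in \eqref{six:thm:ineq:eq4}--\eqref{six:thm:ineq:eq7} come from which vertex all agree with the paper, and your algebraic reformulation of \eqref{six:thm:ineq:eq3} via $x_1(C)\le x_1(C^*_F)$ (equivalently $\gamma\le(1-\ptop_1)(1-2\ptop_2)$) is indeed equivalent to the paper's formulation via $x_2(\pi_3(F))\ge\ptop_2$.

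The main place where your route differs is the handling of the case dichotomy. You propose a ``crucial algebraic verification'' that in each regime the feasible case's inequalities imply the other case's. The paper achieves the same end with a short geometric argument that needs no algebra: since $D^*$ lies in the triangle $\ue_3 B^*\uf_1$, since $\{E,F\}\subset[D^*,C]$ with $C\in\{E,F\}$, and since the triangle is convex, the condition ``$C$ in the triangle'' is literally equivalent to ``both $E$ and $F$ in the triangle''; and analogously $A,G\in[\uf_2,B]$ with $B\in\{A,G\}$, and $C^*_E,C^*_F\in[B^*,C^*]$ with $C^*\in\{C^*_E,C^*_F\}$, so in each instance the constraint at the actual point is equivalent to the pair of unconditional inequalities. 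This collapses what you frame as a delicate verification ``resting on identities involving $\beta$, $\gamma$ and the Jarn\'{\i}k relation'' into a one-line convexity observation, and is markedly cleaner. Two smaller discrepancies: your derivation of $\pbot_2+\ptop_3\le 1$ from the identity $P_1+P_2+P_3=q$ is a valid alternative to the paper's, which instead observes that when $\ptop_3>1/2$ the point $D^*$ lands on $[\uf_1,\uf_3]$ and $\pbot_2+\ptop_3\le 1$ is exactly what is needed for $E^*$ to lie on $[D,D^*]$; and your treatment of the degenerate case $\ptop_2=1/2$ (forcing $B=A$, $C^*=C$) and of the redundancy of \eqref{six:thm:ineq:eq7} in the generic case is only asserted, whereas the paper carries these out explicitly. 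None of these are fatal, but the algebraic verification you flag as the crux would need to actually be written out for the proof to be complete.
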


As a consequence, we deduce that \eqref{six:thm:ineq:eq1} to
\eqref{six:thm:ineq:eq3} are necessary and sufficient conditions
for a point $(\ptop_1,\pbot_3,\pbot_1,\ptop_2)$ to be in the
spectrum of $(\phitop_1,\phibot_3,\phibot_1,\phitop_2)$.  This
follows from the remark made after Proposition \ref{six:prop:phibot1}
upon noting that the other inequalities are satisfied when
$\pbot_2=0$ and $\ptop_3=1$. Similarly, the duals of these
constraints together with $\pbot_2+\ptop_3\le 1$
are necessary and sufficient conditions
for a point $(\ptop_1,\pbot_3,\pbot_2,\ptop_3)$ to be in the
spectrum of $(\phitop_1,\phibot_3,\phibot_2,\phitop_3)$.

In \cite{La2009}, Laurent showed that the spectrum of the exponents
$(\phibot_1,\phibot_3,\phitop_1,\phitop_3)$ consists of the points
$(\pbot_1,\pbot_3,\ptop_1,\ptop_3)$ in $\bR^4$ satisfying $0\le \pbot_1\le
\ptop_1\le 1/3$, Jarn\'{\i}k's condition \eqref{six:thm:ineq:eq2},
as well as \eqref{six:thm:ineq:eq7} and its dual.  Very recently,
in \cite{SS2016b}, Schmidt and Summerer have independently established
the inequality \eqref{six:thm:ineq:eq3} and its dual.  However, we must
warn the reader that our definition of the exponents $\phibot_i$ and
$\phitop_i$ is slightly different so that, for each $i=1,2,3$,
what they write as $\phibot_i$ (resp.\ $\phitop_i$) becomes $1-3\phitop_{4-i}$
 (resp.\ $1-3\phibot_{4-i}$) in our notation.

\begin{proof}[Proof of Theorem \ref{six:thm:ineq}]
We first show that the conditions \eqref{six:thm:ineq:eq1} to
\eqref{six:thm:ineq:eq7} are equivalent
to the existence of an elementary path $AA^*B^*C^*CBA$
as in Proposition \ref{six:prop:phibot1}.
To begin, we note that the first three conditions in
\eqref{six:prop:phibot1:eq2} determine uniquely $A$, $A^*$ and $B^*$:
\[
 A = (\ptop_1, \ptop_1, 1-2\ptop_1),\quad
 A^* = (1-2\pbot_3, \pbot_3, \pbot_3),\quad
 B^* = (1-2\ptop_2, \ptop_2, \ptop_2).
\]
Then \eqref{six:thm:ineq:eq1} and \eqref{six:thm:ineq:eq2}
simply translate the requirements that
\[
 A  \in [\uf_2,D]
    \subseteq \bar{L}=[\uf_2,\uf_3],
 \quad
 A^*\in [\uf_2,B^*]
    \subseteq [\uf_2,D^*]
    \subseteq \bar{L}^*=[\uf_2,\uf_1]
 \et
 A^*\in [A,\ue_2],
\]
where
\[
 D=(\pbot_1,\pbot_1,1-2\pbot_1)
 \et
 D^*=(\pbot_1,(1-\pbot_1)/2,(1-\pbot_1)/2).
\]
In particular, \eqref{six:thm:ineq:eq1} implies that
there exists a unique choice of points $E,F\in [D,D^*]$ satisfying
$x_2(E)=\ptop_2$ and $\pi_1(F)=A$.  They are given by
\[
 E=(\pbot_1,\ptop_2,1-\pbot_1-\ptop_2)
 \et
 F=\left( \pbot_1, \frac{(1-\pbot_1)\ptop_1}{1-\ptop_1},
       \frac{(1-\pbot_1)(1-2\ptop_1)}{1-\ptop_1} \right).
\]
Since $E\in\bar\Delta^{(3)}$, we may also form
\[
 G=\pi_1(E)
  =(1-\pbot_1+\ptop_2)^{-1}(\ptop_2, \ptop_2, 1-\pbot_1-\ptop_2)
  \in \bar{L}.
\]

Suppose first that $B^*\neq\uf_1$ or, equivalently, that
$\ptop_2< 1/2$.  The proof of Proposition \ref{six:prop:phibot1}
shows that, if there exists an elementary path
$\cE=AA^*B^*C^*CBA$ with the requested properties,
then the points $B$, $C$ and $C^*$ are given by
\[
\begin{cases}
 C=E,\ B=G,\ C^*=C^*_E
   &\text{if $G\in (A,D]$, as in
      Figure \ref{six:fig:phibot1:C=E},}\\
 C=F,\ B=A,\ C^*=C^*_F
   &\text{if $G\in [\uf_2,A]$, as in
      Figure \ref{six:fig:phibot1:B=A},}
\end{cases}
\]
where $C^*_E$ (resp.\ $C^*_F$) denotes the point of
intersection of $\overleftrightarrow{\ue_3B^*}$ with
the non-parallel line $\overleftrightarrow{\ue_2E}$
(resp.\ $\overleftrightarrow{\ue_2F}$).  We find that
\begin{align*}
 C^*_E
  &=\beta^{-1}\big(\, \pbot_1(1-2\ptop_2),\,
     \pbot_1\ptop_2,\, (1-\pbot_1-\ptop_2)(1-2\ptop_2)\,\big),\\
 C^*_F
  &=\gamma^{-1}\big(\, \pbot_1(1-\ptop_1)(1-2\ptop_2),\,
     \pbot_1\ptop_2(1-\ptop_1),\, (1-\pbot_1)(1-2\ptop_1)(1-2\ptop_2)\,\big),
\end{align*}
where $\beta,\gamma>0$ are as in the statement of the
proposition.  In order for these points to make
an elementary path, we simply need (besides
\eqref{six:thm:ineq:eq1} and \eqref{six:thm:ineq:eq2})
that $C$ belongs to the triangle $\ue_3B^*\uf_1$.  Since
$D^*$ already belongs to that triangle and since
$C\in\{E,F\}\subset [D^*,C]$, this is equivalent to asking that
both $E$ and $F$ belong to that triangle.  This is
automatic for $E$, because of the geometry.
For the point $F$, this
is equivalent to asking that $F=\ue_3$ or that
$\pi_3(F)\in [B^*,\uf_1]$.  Since
\[
 B^*\neq \uf_1
   \ \Rightarrow\ A^*\neq\uf_1
   \ \Rightarrow\ \pi_1(F)=A\neq\ue_3
   \ \Rightarrow\ F\neq\ue_3,
\]
this condition reduces to $x_2(B^*)\le x_2(\pi_3(F))$, which translates
into \eqref{six:thm:ineq:eq3}.  The conditions $x_1(C)=\pbot_1$
and $x_2(C)\le \ptop_2$ follow from the choice of $C$.  We also
have $B\in\{A,G\}$ and $A,G\in [\uf_2,B]$.  Thus the condition
$x_2(B)\ge \pbot_2$ is equivalent to both $x_2(A)\ge \pbot_2$ and
$x_2(G)\ge \pbot_2$ which translate into \eqref{six:thm:ineq:eq4}.
Similarly, we have $C^*\in\{C^*_E,C^*_F\}$ and $C^*_E,C^*_F\in
[B^*,C^*]$, because $E,F\in[C,D^*]$.  Thus, the condition
$x_2(C^*)\ge \pbot_2$ is equivalent to both
$x_2(C_E^*)\ge \pbot_2$ and $x_2(C_F^*)\ge \pbot_2$ which is
\eqref{six:thm:ineq:eq5}, while $x_3(C^*)\le \ptop_3$
is equivalent to both $x_3(C_E^*)\le \ptop_3$ and
$x_3(C_F^*)\le \ptop_3$, which is \eqref{six:thm:ineq:eq6}.
Thus, the inequalities \eqref{six:thm:ineq:eq1} to
\eqref{six:thm:ineq:eq6} are equivalent to the existence
of an elementary path as requested in Proposition \ref{six:prop:phibot1}
when $\ptop_2\neq 1/2$.  The extra inequality
\eqref{six:thm:ineq:eq7} expresses the condition
$\ptop_3\ge x_3(F)$. It is a consequence of the previous
ones because they give $\ptop_3\ge x_3(C^*_F)\ge x_3(F)$.

Finally, if $\ptop_2=1/2$, we have $\pbot_1=0$
because of \eqref{six:thm:ineq:eq1}, thus
$E=B^*=\uf_1$.  Then, for an elementary path
$\cE=AA^*B^*C^*CBA$ to fulfill the requested properties,
we need to have $B=A$ and $C^*=C=F$.
Conversely, for this choice of points, $\cE$ is an
elementary path in $\bar\Delta^{(3)}$. The constraints
$x_1(C)=\pbot_1=0$ and $x_2(C)\le \ptop_2=1/2$ are
automatically satisfied.  The condition $x_2(B)\ge \pbot_2$
reduces to the first inequality in \eqref{six:thm:ineq:eq4}
and implies that $x_2(C^*)\ge \pbot_2$ since
$x_2(C^*)=x_2(F)\ge x_2(A)=x_2(B)$.
Moreover, the condition $x_3(C^*)\le \ptop_3$ reduces to
\eqref{six:thm:ineq:eq7}.  The remaining inequalities
\eqref{six:thm:ineq:eq3}, \eqref{six:thm:ineq:eq5}
and \eqref{six:thm:ineq:eq6} are no restriction
when $\pbot_1=0$ and $\ptop_2=1/2$, while the second inequality
in \eqref{six:thm:ineq:eq4} reduces to $\pbot_2\le 1/3$,
a consequence of \eqref{six:thm:ineq:eq1} together
with the first part of \eqref{six:thm:ineq:eq4}.  This completes
the proof of the first assertion of the theorem.

The proof of the second assertion is dual but there is a slight
complication due to the fact that the boundary of $\bar\Delta^{(3)}$
is not symmetric. The line $x_3=\ptop_3$ must cut this boundary
in a point $D\in\bar{L}$ and a point $D^*\in\bar{L}^*\cup[\uf_1,\uf_3]$.
When $\ptop_3\le 1/2$, we have $D^*\in\bar{L}^*$ and the constraint
$\pbot_2+\ptop_3\le 1$ is automatically satisfied. Otherwise, we need
it in order to ensure that the point $E^*=(1-\pbot_2-\ptop_3,\pbot_2,\ptop_3)$
belongs to $[D,D^*]$.

The final assertion follows immediately thanks
to Corollary \ref{six:cor:spectrum}.
\end{proof}

\begin{proof}[Proof of Theorem \ref{intro:thm:S}]
The above result shows that $\cS$ is defined by polynomial
equalities and inequalities with coefficients in $\bQ$.
Moreover, \eqref{six:thm:ineq:eq1} and
\eqref{six:thm:ineq:eq2} imply that $\cS$ is
contained in $\bR^2\times J\times \bR^2$ where $J$ is the
portion of the curve $(1-2x)(1-2y)=xy$ in $[1/3,1/2]
\times [0,1/3]$.  This proves the first assertion of
the theorem.

Denote by $U$ the largest open subset of $\bR^2\times J\times \bR^2$
contained in $\cS$.  To complete the proof it remains to show that
$U$ is dense in $\cS$.  To this end, fix a point
$\ualpha=(\pbot_1,\pbot_2,\pbot_3,\ptop_1,\ptop_2,\ptop_3)$
of $\cS$ coming from an integral self-similar
$3$-system $\uS$.  Since, by Corollary \ref{self:cor}, such
points are dense in $\cS$, it suffices to show that $\ualpha$
belongs to the closure of $U$.

By Proposition
\ref{semi:prop}, the set $\cK(\uS)$ is the convex hull of a
finite union of strict elementary paths.  Thus the coordinates
of $\ualpha$ satisfy
\[
 0<\pbot_1<\ptop_1<1/3<\pbot_3<\ptop_3<1
 \et
 0<\pbot_2<\ptop_2<1/2.
\]
Consider the elementary path attached to $\ualpha$
by Proposition \ref{six:prop:phibot1}.  Geometric
considerations, based on Figures \ref{six:fig:phibot1:C=E}
and \ref{six:fig:phibot1:B=A}, show that this is a strict
elementary path and that, for each $\epsilon>0$
sufficiently small, and for each
$\delta_1,\delta_2\in (-\epsilon^3,\epsilon^3)$,
it can be deformed into another strict elementary path
$\cE_1=A_1A_1^*B^*_1C^*_1C_1B_1A_1$ of the same type
(with $A_1=B_1$ or $x_2(B^*_1)=x_2(C_1)$), uniquely
determined by the conditions
\[
 x_1(A_1)=\ptop_1+\epsilon,
 \quad
x_2(B^*_1)=\ptop_2+\epsilon^2+\delta_1,
 \quad
 x_1(C_1)=\pbot_1-\epsilon^3+\delta_2,
\]
and that, for this path, we have
\[
  x_2(C_1)\le x_2(B^*_1),
 \quad
 x_2(B_1)>\pbot_2,
 \quad
 x_2(C^*_1)>\pbot_2
 \et
 x_3(C^*_1)<\ptop_3.
\]
It should also be possible to check this directly using the
inequalities of Proposition \ref{six:thm:ineq} (which by hypothesis
hold true for the given choice of $\pbot_1,\dots,\ptop_3$).

Similarly, by deformation of the elementary path provided by
Proposition \ref{six:prop:phitop3}, we find that, for each
$\eta>0$ sufficiently small, and for each
$\delta_3,\delta_4\in\ (-\eta^3,\eta^3)$,
there exists a unique elementary path
$\cE_2=A_2A_2^*B^*_2C^*_2C_2B_2A_2$ of the same type
determined by the conditions
\[
 x_3(A^*_2)=\pbot_3-\eta,
 \quad
 x_2(B_2)=\pbot_2-\eta^2-\delta_3,
 \quad
 x_3(C^*_2)=\ptop_3+\eta^3+\delta_4,
\]
and that it satisfies
\[
 x_2(C^*_2)\ge x_2(B_2),
 \quad
 x_1(C_2)>\pbot_1,
 \quad
 x_2(C_2)<\ptop_2
 \et
 x_2(B^*_2)<\ptop_2.
\]

As the point $(\pbot_3,\ptop_1)$ belongs to the
curve $J$, we can choose $\eta$ so that
$(\pbot_3-\eta,\ptop_1+\epsilon)$ also belongs to
$J$.  Then, we have $A_1=A_2$, $A^*_1=A^*_2$ and, by
Corollary \ref{six:cor:spectrum}, we deduce that
\[
 (\pbot_1-\epsilon^3+\delta_2,\ \pbot_2-\eta^2-\delta_3,\
  \pbot_3-\eta,\ \ptop_1+\epsilon,\
  \ptop_2+\epsilon^2+\delta_1,\ \ptop_3+\eta^3+\delta_4)
\]
belongs to $\cS$. By varying $\epsilon,\delta_1,\dots,
\delta_4$, these points make an open subset of
$\bR^2\times J\times \bR^2$.  So, they are in fact
contained in $U$.  The conclusion follows since
they converge to $\ualpha$ as $\epsilon$ goes to $0$.
\end{proof}



\subsection*{Acknowlegment.}
This research is partially supported by NSERC.

\end{document}